\author{Jos\'e Esp\'irito Santo\\
Centro de Matem\'atica\\
Universidade do Minho\\
4710-057 Braga\\
Portugal\\
\texttt{jes@math.uminho.pt}\\ \and
Gilda Ferreira\\
DCeT, Universidade Aberta, 1269-001 Lisboa, Portugal\\
CMAFcIO, Faculdade de Ci\^{e}ncias da Universidade de Lisboa,\\ 1749-016 Lisboa, Portugal\\
\texttt{gmferreira@fc.ul.pt}
}
\title{The Russell-Prawitz embedding and the atomization of universal instantiation}
\newcommand{\lb}{\lambda}
\newcommand{\Lb}{\Lambda}
\newcommand{\pair}[2]{\langle#1,#2\rangle}
\newcommand{\ipair}[2]{\langle #2\rangle_{#1=1,2}} 
\newcommand{\proj}[2]{#2#1}
\newcommand{\injnsymb}{\mathsf{in}} 
\newcommand{\injn}[4]{\injnsymb_{#1}(#2,#3,#4)} 
\newcommand{\ainjnsymb}{\underline{\mathtt{in}}}
\newcommand{\ainjn}[4]{\ainjnsymb_{#1}(#2,#3,#4)} 
\newcommand{\rpinjnsymb}{\underline{\mathtt{IN}}}
\newcommand{\rpinjn}[4]{\rpinjnsymb_{#1}(#2,#3,#4)} 
\newcommand{\case}[6]{\mathsf{case}(#1,#2.#3,#4.#5,#6)}
\newcommand{\casesymb}{\mathsf{case}} 
\newcommand{\cse}[5]{\mathsf{case}(#1,#2.#3,#4.#5)} 
\newcommand{\acasesymb}{\underline{\mathtt{case}}} 
\newcommand{\acase}[6]{\underline{\mathtt{case}}(#1,#2.#3,#4.#5,#6)} 
\newcommand{\rpcasesymb}{\underline{\mathtt{CASE}}} 
\newcommand{\rpcase}[6]{\rpcasesymb(#1,#2.#3,#4.#5,#6)} 
\newcommand{\abort}[2]{\mathsf{abort}(#1,#2)}
\newcommand{\abortsymb}{\mathsf{abort}} 
\newcommand{\aabortsymb}{\underline{\mathtt{abort}}} 
\newcommand{\aabort}[2]{\underline{\mathtt{abort}}(#1,#2)} 
\newcommand{\aabbortsymb}{\underline{\mathtt{ABORT}}} 
\newcommand{\aabbort}[2]{\underline{\mathtt{ABORT}}(#1,#2)} 
\newcommand{\rpabortsymb}{\underline{\mathtt{ABORT}}} 
\newcommand{\rpabort}[2]{\rpabortsymb(#1,#2)} 
\newcommand{\dvee}{\underline{\vee}} 
\newcommand{\dperp}{\underline{\perp}} 
\newcommand{\E}{\mathcal{E}}
\newcommand{\Eall}{\mathcal{E}_{\forall}} 
\newcommand{\hole}[1]{[#1]}
\newcommand{\holec}{\_}
\newcommand{\ehole}{\hole{\holec}}
\newcommand{\betai}{\beta_{\supset}}
\newcommand{\betac}{\beta_{\wedge}}
\newcommand{\betad}{\beta_{\vee}} 
\newcommand{\betaall}{\beta_{\forall}} 
\newcommand{\pii}{\pi_{\supset}}
\newcommand{\pic}{\pi_{\wedge}}
\newcommand{\pid}{\pi_{\vee}}
\newcommand{\pia}{\pi_{\perp}}
\newcommand{\abi}{\varpi_{\supset}}
\newcommand{\abc}{\varpi_{\wedge}}
\newcommand{\abd}{\varpi_{\vee}}
\newcommand{\aba}{\varpi_{\perp}}
\newcommand{\ab}{\varpi}
\newcommand{\atd}{\varrho}
\newcommand{\ata}{\rho}
\newcommand{\ccd}{\varepsilon} 
\newcommand{\cca}{\epsilon} 
\newcommand{\etai}{\eta_{\supset}}
\newcommand{\etac}{\eta_{\wedge}}
\newcommand{\etad}{\eta_{\vee}}
\newcommand{\etaall}{\eta_{\forall}}
\newcommand{\ipc}{\mathbf{IPC}}
\newcommand{\fat}{{\mathbf{F}}_{\mathbf{at}}}
\newcommand{\f}{\mathbf{F}}
\newcommand{\allelimat}{\forall E_{\mathbf{at}}} 
\newcommand{\am}[1]{#1^{\circ}} 
\newcommand{\rpm}[1]{#1^{\bullet}} 
\newtheorem{defn}{Definition}
\newtheorem{lem}{Lemma}
\newtheorem{prop}{Proposition}
\newtheorem{thm}{Theorem}
\begin{document}

\maketitle


\begin{abstract}

Given the recent interest in the fragment of system $\f$ where universal instantiation is restricted to atomic formulas, a fragment nowadays named system $\fat$, we study directly in system $\f$ new conversions whose purpose is to enforce that restriction. We show some benefits of these new atomization conversions: (1) They help achieving strict simulation of proof reduction by means of the Russell-Prawitz embedding of $\ipc$ into system $\f$;
(2) They are not stronger than a certain ``dinaturality'' conversion known to generate a consistent equality of proofs;
(3) They provide the bridge between the Russell-Prawitz embedding and another translation, due to the authors, of $\ipc$ directly into system $\fat$; (4) They give means for explaining why the Russell-Prawitz translation achieves strict simulation whereas the translation into $\fat$ does not.
\end{abstract}

\noindent\textbf{Keywords:} Intuitionistic propositional calculus, system F, predicative polymorphism, Russell-Prawitz translation, proof reduction.



\section{Introduction}\label{sec:intro}

The Russell-Prawitz translation of the intuitionistic propositional calculus $\ipc$ into second-order intuitionistic propositional calculus $\mathbf{NI}^2$, the latter based on the language only containing implication, conjunction and the second-order universal quantifier, rests on the following encoding of disjunction and absurdity $A\vee B:= \forall X. ((A\supset X)\wedge(B\supset X))\supset X$ and $\perp:=\forall X. X$. This encoding is due to Prawitz but its idea goes back to Russell \cite{Prawitz65}. Under the Curry-Howard correspondence, the target of the translation can be taken to be Girard's polymorphic system $\f$ \cite{GirardLafontTaylor89}.

At the level of proofs, while the translation of the introduction rule for disjunction is straightforward, there are several alternatives for the translation of the elimination rules for the encoded connectives. The most direct one makes full use of the elimination rule for the second order quantifier. For instance, given in $\f$ a ``foreign'' formula $C$ and proofs of $A\supset C$ and $B\supset C$, it is immediate to obtain a proof of $C$ from a proof of $A\vee B$ as defined above, starting by an instantiation of the quantifier to the formula $C$. This idea is implicit in \cite{Prawitz65} - we will confirm this later on in this paper. Following \cite{Aczel2001,Ferreira2017,TranchiniPistonePetrolo2019}, we are calling Russell-Prawitz translation the translation of $\ipc$ into system $\f$ based on this translations of proofs.

There are alternative translations of $\ipc$ proofs, still employing the Russell-Prawitz translation of formulas, which make a restricted use of the elimination rule for the second order quantifier, only requiring instantiation by atomic formulas. One such translation is based on the idea of ``instantiation overflow'' \cite{Ferreira2006,FerreiraFerreira2009} - the observation that full instantiation of the quantifiers in formulas of the form $A\vee B$ or $\perp$ as above is admissible in system $\fat$ --  the restriction of system $\f$ to atomic universal instantiations. Another translation of $\ipc$ into $\fat$, always with the same translation of formulas, was proposed recently by the authors \cite{JESGFerreira2019} and is based on the observation that the elimination rules for the defined connectives are admissible in $\fat$.

There are several reasons to study embeddings of $\ipc$ into $\fat$, the foremost being that $\fat$ is a predicative fragment of $\f$. Another reason has to do with preservation of proof identities generated in $\ipc$ by commuting conversions or $\eta$-reductions: the various embeddings into $\fat$ achieve that preservation \cite{FerreiraFerreira2009,Ferreira2017,JESGFerreira2019}, while the Russell-Prawitz translation into $\f$ does not \cite{GirardLafontTaylor89,TS96,FerreiraFerreira2009,Ferreira2017,TranchiniPistonePetrolo2019}. This seems an indication that other conversion principles are missing in $\f$, besides the $\beta\eta$ ones. Indeed, a general commuting principle, expressing ``naturality'' in the categorial semantics of $\f$, was suggested in \cite{TranchiniPistonePetrolo2019}, with good results for the problem of preservation of identity of proofs.

Given the interest in system $\fat$, we study in system $\f$ other conversions whose purpose is to enforce the restriction to atomic universal instantiation, with the goal of shedding light, not only on the problem of preservation of proof identity, but also on the existence of alternative embeddings of $\ipc$ into $\f$ and $\fat$. We show that, when $\f$ is equipped with these \emph{atomization conversions}, the Russell-Prawitz translation achieves a strict simulation of proof reduction. Moreover, those conversions connect the Russell-Prawitz translation into system $\f$ and the translation into system $\fat$ introduced by the authors. First, it is easy to establish a strong relation between the two translations at the level of proofs: given an $\ipc$ proof, its translation into $\fat$ is the normal form of its translation into $\f$ w.r.t.~the atomization conversions. Second, a more laborious connection at the level of proof reduction is worked out, providing a comprehensive, detailed and clear picture of the problem of preservation of proof identity/reduction: indeed, as discussed in the final section of this paper, the key to the stronger preservation of proof \emph{reduction} is the timing of atomization.

Of course, one has to be sure that adding those atomization conversions to system $\f$ does not collapse proof identity. A similar problem was faced in \cite{TranchiniPistonePetrolo2019}, where a ``naturality'' conversion was added to system $\f$. In that paper the following argument of consistency is outlined: the equality generated by adding the ``naturality'' conversions is contained in the equality generated by adding a stronger ``dinaturality'' conversion, and the latter equality is known to have models. Here we follow the same strategy to show with considerable detail that adding atomization conversions to system $\f$ will not entail that any two terms are inter-convertible.

\textbf{Plan of the paper.} Section \ref{sec:background} recalls $\ipc$ and systems $\f$ and $\fat$. Section \ref{sec:RP} recalls the Russell-Prawitz translation, recasting it as a translation between typed $\lambda$-calculi. Section \ref{sec:atomization} introduces atomization conversions, relates them to other commuting conversions in system $\f$, and proves strict simulation by the Russell-Prawitz translation. Section \ref{sec:comparison} considers the Russell-Prawitz translation together with the embedding into $\fat$ due to the authors, relating them at the levels of proofs and proof reduction, through the atomization conversions. Section \ref{sec:final} rounds up the paper with a discussion.

\section{Background}\label{sec:background}

We present the systems we will use in the paper ($\ipc$, $\f$ and $\fat$).

\subsubsection*{System $\ipc$}

Throughout this work the interpreted system is the Intuitionistic Propositional Calculus ($\ipc$), which we now recall.

The types/formulas in $\ipc$ are given by
$$
A,B,C\,::=\,X\,|\,\perp\,|\,A\supset B\,|\,A\wedge B\,|\,A\vee B
$$
We define $\neg A:=A\supset\perp$.

The proof terms $M,N,P,Q$ are inductively generated as follows:
$$
\begin{array}{rcll}
M&::=&x&\textrm{(assumption)}\\
&|&\lb x^{A}.M\,|\,MN&\textrm{(implication)}\\
&|&\pair MN\,|\,\proj 1M\,|\,\proj 2M&\textrm{(conjunction)}\\
&|&\injn 1MAB\,|\,\injn 2NAB\,|\,\case M{x^A}P{y^B}QC&\textrm{(disjunction)}\\
&|&\abort MA&\textrm{(absurdity)}
\end{array}
$$

\noindent We work modulo $\alpha$-equivalence, in particular we assume the name of the bound variables is always appropriately chosen.

Sometimes, when clear from the context or not relevant, the type annotations in the proof terms will be omitted. This applies to type annotations in binders, or for the last type parameter in $\case M{x^A}P{y^B}QC$ and $\abort MA$\footnote{Of course, one can define a version of proof terms for $\ipc$ without the last type parameter in $\case M{x^A}P{y^B}QC$ and $\abort MA$. But, as in \cite{JESGFerreira2019}, that last type parameter makes it possible to define proof translations directly on proof terms. This happened in \emph{op. cit.} with a translation into $\fat$, and will happen in this paper with the Russell-Prawitz translation into $\f$.}. When possible and convenient, we write $\pair{P_1}{P_2}$ as $\ipair i{P_i}$.

The typing/inference rules are in Fig.~\ref{fig:typing}. A \emph{sequent} is an expression $\Gamma\vdash M:A$. An \emph{environment} $\Gamma$ is a set of \emph{declarations} $x:A$ such that each variable is declared at most one time in $\Gamma$. Given $\Gamma$ and $A$, \emph{$M$ has type $A$ in $\Gamma$} if $\Gamma\vdash M:A$ is derivable; given $\Gamma$, \emph{$M$ is typable in $\Gamma$} if, for some $A$, $M$ has type $A$ in $\Gamma$; and $M$ is \emph{typable} if $M$ is typable in some $\Gamma$.

\begin{figure}[t]\caption{Typing/inference rules of $\ipc$}\label{fig:typing}
$$
\begin{array}{c}
\infer[Ass]{\Gamma,x:A\vdash x:A}{}\\ \\
\infer[\supset\! I]{\Gamma\vdash\lb x^{A}.M:A\supset B}{\Gamma,x:A\vdash M:B}\qquad
\infer[\supset\! E]{\Gamma\vdash MN:B}{\Gamma\vdash M:A\supset B&\Gamma\vdash N:A}\\ \\
\infer[\wedge I]{\Gamma\vdash \pair MN:A\wedge B}{\Gamma\vdash
M:A&\Gamma\vdash N:B}\qquad
\infer[\wedge E1]{\Gamma\vdash \proj 1M:A}{\Gamma\vdash M:A\wedge B}\qquad\infer[\wedge E2]{\Gamma\vdash \proj 2M:B}{\Gamma\vdash M:A\wedge B}\\ \\
\infer[\vee I1]{\Gamma\vdash\injn 1MAB:A\vee B}{\Gamma\vdash M:A}\qquad
\infer[\vee I2]{\Gamma\vdash\injn 2NAB:A\vee B}{\Gamma\vdash N:B}\\ \\
\infer[\vee E]{\Gamma\vdash\case M{x^A}P{y^B}QC:C}{\Gamma\vdash
M:A\vee B&\Gamma,x:A\vdash P:C&\Gamma,y:B\vdash Q:C}\\ \\
\infer[\perp\! E]{\Gamma\vdash\abort MA:A}{\Gamma\vdash M:\perp}
\end{array}
$$
\end{figure}

For the purpose of discussing some reduction rules and defining the translation
of proof terms, it is convenient to arrange the syntax of the system in a
different way:
$$
\begin{array}{rrcl}
\textrm{(Terms)}&M&::=&V\,|\,\E\hole M\\
\textrm{(Values)}&V&::=&x\,|\lb x.M\,|\,\pair MN\,|\,\injn 1MAB\,|\,\injn
2NAB\\
\textrm{(Elim. contexts)}&\E&::=&\ehole N\,|\,\proj
1{\ehole}\,|\,\proj 2{\ehole}\\
&& | & \case {\ehole}{x}P{y}QC\,|\,\abort{\ehole}A
\end{array}
$$
\noindent A \emph{value} $V$ ranges over terms representing
assumptions or introduction inferences. $\E$ stands for an
\emph{elimination context}, which is a term representing an
elimination inference, but with a ``hole'' in the position of the
main premiss. $\E\hole M$ denotes the term resulting from filling
the hole of $\E$ with $M$.


In Fig.~\ref{fig:typing-contexts} one finds the typing rules for
elimination contexts. In a sequent $\Gamma|A\vdash\E:B$, the type $A$ is the type of the hole of $\E$ and $B$ is
the type of the term obtained by filling the hole of $\E$ with a
term of type $A$. If the sequent $\Gamma|A\vdash\E:B$ is derivable, we say \emph{$\E$ has type $B$ and hole of type $A$ in $\Gamma$}.
\begin{figure}[t]\caption{Typing rules for elimination contexts}\label{fig:typing-contexts}
$$
\begin{array}{c}
\infer{\Gamma|\perp\,\,\vdash\abort {\ehole}A:A}{}\qquad\infer[(i=1,2)]{\Gamma|A_1\wedge A_2\vdash\proj
i{\ehole}:A_i}{}\\ \\\infer{\Gamma|A\supset B\vdash\ehole N:B}{\Gamma\vdash
N:A}\qquad
\infer{\Gamma|A\vee B\vdash\case{\ehole}xPyQC:C}{\Gamma,x:A\vdash
P:C&\Gamma,y:B\vdash Q:C}\\ \\
\infer{\Gamma\vdash\E\hole M:B}{\Gamma\vdash M:A&\Gamma|A\vdash\E:B}
\end{array}
$$
\end{figure}

The reduction rules are given in Fig.~\ref{fig:reduction-rules}. We let $\beta:=\betai\cup\betac\cup\betad$ and similarly for $\eta$; and we let $\pi:=\pii\cup\pic\cup\pid\cup\pia$ and similarly for $\ab$.

\begin{figure}[t]\caption{Reduction rules}\label{fig:reduction-rules}
Detour conversion rules:
$$
\begin{array}{rrcll}
(\betai)&(\lb x.M)N&\to&[N/x]M&\\
(\betac)&\proj i{\pair{M_1}{M_2}}&\to&M_i&\textrm{ ($i=1,2$)}\\
(\betad)&\cse{\injn iM{A_1}{A_2}}{x_1^{A_1}}{P_1}{x_2^{A_2}}{P_2}&\to&[M/x_i]P_i&\textrm{ ($i=1,2$)}
\end{array}
$$
Commutative conversion rules for disjunction (in the 2nd rule, $i\in\{1,2\}$):
$$
\begin{array}{rrcl}
(\pii) & (\case{M}{x}{P}{y}{Q}{C\supset D})N & \to & \case{M}{x}{PN}{y}{QN}{D}\\
(\pic) & \proj i{(\case{M}{x}{P}{y}{Q}{C_1\wedge C_2})} & \to & \case{M}{x}{\proj iP}{y}{\proj iQ}{C_i}\\ 
(\pid) & \\
\multicolumn{2}{c}{\case{\case{M}{x'}{P'}{y'}{Q'}{C\vee D}}{x^C}{P}{y^D}{Q}{E}}  & \to & \\ 
        \multicolumn{4}{c}{\case M{x'}{\case{P'}{x^C}{P}{y^D}{Q}{E}}{y'}{\case{Q'}{x^C}{P}{y^D}{Q}{E}}{E}}\\
(\pia) & \abort{\case{M}{x}{P}{y}{Q}{\perp}}{C} & \to &\\
        \multicolumn{4}{c} {\case{M}{x}{\abort P C}{y}{\abort Q C}{C}}
\end{array}
$$

Commutative conversion rules for absurdity (in the 2nd rule, $i\in\{1,2\}$):
$$
\begin{array}{rrcl}
(\abi) & (\abort M{C\supset D})N & \to & \abort MD\\
(\abc) & (\abort M{C_1\wedge C_2})i & \to & \abort M{C_i}\\ 
(\abd) & \case{\abort M{C \vee D}}{x^C}P{y^D}Q{E} & \to & \abort ME \\
(\aba) & \abort{\abort M{\perp}}C & \to & \abort MC
\end{array}
$$
$\eta$-rules:
$$
\begin{array}{rrcll}
(\etai)&\lb x.Mx&\to&M&(x\notin M)\\
(\etac)&\pair{\proj 1M}{\proj 2M}&\to&M&\\
(\etad)&\case M{x^A}{\injn 1x{A}{B}}{y^B}{\injn 2yAB}{A\vee B}&\to&M&
\end{array}
$$
\end{figure}

Given a reduction rule $R$ of $\ipc$, we employ the usual notations concerning reduction relations generated by $R$: the compatible closure\footnote{A relation $R$ on the proof terms is \emph{compatible} if it is compatible with each proof-term constructor. For instance, $R$ is compatible with the application constructor if $M\,R\,M'$ implies $(MN)\,R\,(M'N)$, and $N\,R\,N'$ implies $(MN)\,R\,(MN')$.} of $R$ is denoted $\to_R$; and $\to_R^+$, $\to_R^*$, $=_R$ denote respectively the transitive closure, the reflexive-transitive closure, and the reflexive-symmetric-transitive closure of $\to_R$. If $R=R_1\cup R_2$, then we may omit ``$\cup$'' in our notation and write $\to_{R_1R_2}$, etc. The same notations apply to systems $\f$ and $\fat$ below.



For every reduction rule $R$ of $\ipc$, $\to_R$ has the \emph{subject reduction} property, that is: if $M\to_R N$ and $M$ has type $A$ in $\Gamma$, then $N$ has type $A$ in $\Gamma$.

Notice $\pi$ is contained in the following reduction rule:
\begin{equation}\label{commutative-conversion-with-context}
\E\hole{\case M{x^A}P{y^B}QC}\to\case M{x^A}{\E\hole P}{y^B}{\E\hole Q}D 
\end{equation}
But the rules are not equivalent, as witnessed by the fact that the latter rule generates a reduction relation $\to$ that does not enjoy subject reduction. This is caused by the fact that types $C$ and $D$ are unconstrained in (\ref{commutative-conversion-with-context}), and so the connections that the various $\pi$-rules establish between the types $C$ and $D$ are not imposed in (\ref{commutative-conversion-with-context}). For instance, in rule $\pii$, $C$ is an implication $E\supset F$, say, and $D$ is $F$. But, in (\ref{commutative-conversion-with-context}), when $\E=\ehole N$, even if we know that the redex has type $F$ in $\Gamma$, and hence $C=E\supset F$, for some $E$, we have no reason to infer that $D=F$, and so we cannot conclude that the contractum has type $F$ in $\Gamma$.

If we wanted to constrain (\ref{commutative-conversion-with-context}) to become equal to $\pi$, we would define it only for certain pairs $(\E,C)$, with $D=D(\E,C)$ determined by $(\E,C)$ as follows:
\begin{itemize}
\item  $D(\ehole N, E\supset F)=F$;
\item $D(\ehole i,C_1\wedge C_2)=C_i$, for $i=1,2$;
\item $D(\case{\ehole}{x^A}{P}{y^B}{Q}{E},A\vee B)=E$; 
\item $D(\abort{\ehole}{E},\perp)=E$.\footnote{Curiously, if the proof terms had been defined without the last type parameter in $\casesymb$- and $\abortsymb$-expressions, then the version of (\ref{commutative-conversion-with-context}) without $C$ and $D$ would determine a reduction relation $\to$ enjoying subject reduction. }
\end{itemize}

Similar remarks apply to $\ab$ versus $\E\hole{\abort MC}\to\abort MD$.



\subsubsection*{System $\f$}

We give a precise definition of the polymorphic system $\f$ by saying what changes relatively to $\ipc$ (for an introduction to system $\f$, see \cite{GirardLafontTaylor89}). In the spirit of the Curry-Howard correspondence, we sometimes refer to $\f$ as the natural deduction system $\mathbf{NI}^2$.

Regarding formulas, $\perp$ and $A\vee B$ are dropped, and the new form $\forall X.A$ is adopted (hence conjunction is taken as primitive in system $\f$). The quantifier $\forall X$ binds free occurrences of $X$, inducing the obvious concept of free occurrence of a type variable in a type. Concerning $\alpha$-equivalence, we deal with type variables as we deal with term variables, relying on silent $\alpha$-renaming. We write $X\notin A$ to say that $X$ does not occur free in $A$; given the silent $\alpha$-renaming in $A$, we may assume $X$ does not occur bound in $A$ either. Another novelty, distinctive of system $\f$, is type substitution in types, $[B/X]A$, meaning: substitution in $A$ of each free occurrence of $X$ by the type $B$.

Regarding proof terms, the constructions relative to $\perp$ and $A\vee B$ are dropped, and the new forms $\Lambda X.M$ and $MB$, with $B$ a type, are added. The latter gives rise to a new form of evaluation contexts: $\E\,::=\, \ehole B$. 


Regarding typing rules, those relative to $\perp$ and $A\vee B$ are dropped, and two rules relative to $\forall X.A$ are adopted:
$$
\infer[\forall I]{\Gamma\vdash \Lambda X.M:\forall X.A}{\Gamma\vdash M:A}\qquad\infer[\forall E]{\Gamma\vdash MB:[B/X]A}{\Gamma\vdash M:\forall X.A}
$$
where the proviso for $\forall I$ is: 
$X$ occurs free in no type in $\Gamma$. Due to rule $\forall E$, the construction $MB$ is called \emph{universal instantiation}.

The new form of elimination contexts $\Eall$ is typed with:
$$
\infer{\Gamma|\forall X.A\vdash\ehole B:[B/X]A}{}
$$

Regarding reduction rules, we drop commuting conversion rules (since they are relative to $\vee$ and $\perp$). What remains are the $\beta$ and $\eta$-rules (but we drop those relative to disjunction). For $\forall$, these are:
$$
\begin{array}{rrcll}
(\betaall)&(\Lb X.M)B&\to&[B/X]M&\\
(\etaall)&\Lb X.MX&\to&M&(X\notin M)\\
\end{array}
$$

We let $\beta:=\betai\cup\betac\cup\betaall$. Similarly for $\eta$.

\subsubsection*{System $\fat$}

The atomic polymorphic system $\fat$, is the fragment of system $\f$ induced by restricting to atomic instances the elimination inference rule for $\forall$, and the corresponding proof term constructor.

Thus the types/formulas of $\fat$ are exactly the types of $\f$  with type substitution in types only required in the atomic form $[Y/X]A$ and the proof terms of $\fat$ only differ from the proof terms of $\f$ in the construction relatively to $\forall E$ being $MB$ replaced by $MX$ which gives rise to $\Eall\,::=\, \ehole X$.



The typing rules relatively to $\forall X.A$ are:
$$
\infer[\forall I]{\Gamma\vdash \Lambda X.M:\forall X.A}{\Gamma\vdash M:A}\qquad\infer[\allelimat]{\Gamma\vdash MY:[Y/X]A}{\Gamma\vdash M:\forall X.A}
$$
where the proviso for $\forall I$ is: $X$ occurs free in no type in $\Gamma$. The new form of elimination contexts $\Eall$ is typed with:
$$
\infer{\Gamma|\forall X.A\vdash\ehole Y:[Y/X]A}{}
$$

The reduction rules for $\forall$ are:
$$
\begin{array}{rrcll}
(\betaall)&(\Lb X.M)Y&\to&[Y/X]M&\\
(\etaall)&\Lb X.MX&\to&M&(X\notin M)\\
\end{array}
$$


\section{The Russell-Prawitz embedding}\label{sec:RP}

We recall the Russell-Prawitz translation of $\ipc$ into $\f$. The treatment is by means of proof terms rather than derivations. In this section we just define the translation, observe type soundness, and revisit \cite{Prawitz65} to justify the designation ``Russell-Prawitz''. The matter of preservation of proof reduction is postponed to the next section.

\begin{defn} In $\f$ (and in $\fat$):
\begin{enumerate}
\item $A\dvee B:=\forall X.((A\supset X)\wedge(B\supset X))\supset X$, with $X\notin A,B$.
\item $\dperp:=\forall X.X$.
\end{enumerate}
\end{defn}

We define the Russell-Prawitz translation of formulas. Using the abbreviations just introduced, the definition can be given in a homomorphic fashion:

$$
\begin{array}{rcl}
\rpm X&=&X\\
\rpm \perp&=&\dperp\\
\rpm{(A\supset B)}&=&\rpm A\supset\rpm B\\
\rpm{(A\wedge B)}&=&\rpm A\wedge\rpm B\\
\rpm{(A\vee B)}&=&\rpm A\dvee \rpm B
\end{array}
$$

The translation of proof terms will rely on the following definition:

\begin{defn}\label{def:admissible-constructions-F} In $\f$:
\begin{enumerate}
\item Given $M,A,B$, given $i\in\{1,2\}$, we define
$$\rpinjn iMAB:=\Lb X.\lb w^{(A\supset X)\wedge(B\supset X)}.\proj i w M\enspace,$$
where the bound variable $X$ is chosen so that $X\notin M,A,B$.
\item Given $M,P,Q,A,B,C$, we define
$$\rpcase M{x^A}P{y^B}QC:=MC\pair{\lb x^A. P}{\lb y^B. Q}$$
\item Given $M,A$, we define $\rpabort MA:=MA$
\end{enumerate}
\end{defn}

It is straightforward to see that the typing rules in Fig.~\ref{fig:admissible-typing-rules-F} - that is, the inference rules for disjunction and absurdity - are derivable in $\f$.

\begin{figure}[t]\caption{Derivable typing rules of $\f$}\label{fig:admissible-typing-rules-F}
$$
\begin{array}{c}
\infer[(i=1,2)]{\Gamma\vdash\rpinjn iM{A_1}{A_2}:A_1\dvee A_2}{\Gamma\vdash M:A_i}\\ \\
\infer{\Gamma\vdash\rpcase M{x^A}P{y^B}QC:C}{\Gamma\vdash M:A\dvee B&\Gamma,x:A\vdash P:C&\Gamma,y:B\vdash Q:C}\\ \\
\infer{\Gamma\vdash\rpabort MC:C}{\Gamma\vdash M:\dperp}
\end{array}
$$
\end{figure}

The following result is also straightforward:
\begin{lem}\label{lem:compatibility-F}
Let $R$ be a relation compatible in the proof-terms of $\f$. Then the compatibility rules in Fig. \ref{fig:compatibility-rules-F} are derivable in $\f$.
\end{lem}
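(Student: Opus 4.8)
The plan is to prove Lemma \ref{lem:compatibility-F} by unfolding the definitions of the three derived constructors in Definition \ref{def:admissible-constructions-F} and tracking how a compatible relation $R$ propagates through the primitive $\f$-constructors that each abbreviation is built from. Since $R$ is assumed compatible in the proof-terms of $\f$, by definition it is closed under each primitive constructor of $\f$ (abstraction $\lb x.M$, application $MN$, pairing $\pair MN$, projections $\proj iM$, type abstraction $\Lb X.M$, and universal instantiation $MB$). The entire content of the lemma is that this closure is inherited by the defined operators $\rpinjnsymb$, $\rpcasesymb$ and $\rpabortsymb$, in each of their argument positions. So the proof is a routine verification, one compatibility rule at a time, and the only real work is to confirm that each abbreviation is assembled solely from constructors under which $R$ is already known to be closed.

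Concretely, I would argue as follows. For $\rpabort MC = MC$: if $M\,R\,M'$, then since $R$ is compatible with instantiation in its term argument, $MC\,R\,M'C$, which is exactly $\rpabort MC\,R\,\rpabort{M'}C$. For $\rpinjn iMAB = \Lb X.\lb w.\proj iwM$: if $M\,R\,M'$, then compatibility with application gives $\proj iwM\,R\,\proj iwM'$ (the projected pair is fixed, only the argument $M$ moves), and then compatibility with $\lb w.(-)$ and with $\Lb X.(-)$ lifts this up through the two binders, yielding $\rpinjn iMAB\,R\,\rpinjn i{M'}AB$. For $\rpcase M{x^A}P{y^B}QC = MC\pair{\lb x.P}{\lb y.Q}$, I would treat the three term positions separately: a move in $M$ propagates through $(-)C$ and then through application to the fixed pair; a move in $P$ propagates through $\lb x.(-)$, then into the first pairing slot, and then through the outer application; and symmetrically for $Q$ through $\lb y.(-)$ and the second pairing slot. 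In each case the conclusion is the corresponding rule in Fig.\ \ref{fig:compatibility-rules-F}.

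I do not expect a genuine obstacle here, since the argument is purely a matter of pushing $R$ through a fixed composition of constructors under each of which $R$ is closed by hypothesis. The only point deserving a moment's care is that the figure's rules must be read as allowing a move in \emph{exactly one} designated position while the remaining arguments are held fixed (matching the clause-by-clause shape of the definition of compatibility for the application constructor quoted in the footnote in Section \ref{sec:background}); once that reading is fixed, each rule corresponds to a single chain of applications of the base compatibility clauses, and the verification is immediate. If one preferred brevity, one could simply observe that each defined constructor is a composition of primitive constructors and invoke the general fact that a relation closed under a family of operations is closed under any composite built from them, specialized to the variable argument.
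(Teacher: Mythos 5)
Your proof is correct and is exactly the routine verification the paper has in mind when it calls the result ``straightforward'' and omits the proof: unfold the abbreviations of Definition~\ref{def:admissible-constructions-F} and push $R$ through the primitive constructors (noting, as you do, that $\proj iwM$ is an application with $M$ in argument position). Nothing further is needed.
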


\begin{figure}[t]\caption{Derivable compatibility rules of $\f$}\label{fig:compatibility-rules-F}
$$
\begin{array}{c}
\infer{\rpinjn iMAB\,R\,\rpinjn i{M'}AB}{M\,R\,M'}\\ \\
\infer{\rpcase MxPyQC\,R\,\rpcase {M'}xPyQC}{M\,R\,M'}\\ \\
\infer{\rpcase MxPyQC\,R\,\rpcase Mx{P'}yQC}{P\,R\,P'}\\ \\
\infer{\rpcase MxPyQC\,R\,\rpcase MxPy{Q'}C}{Q\,R\,Q'}\\ \\
\infer{\rpabort MC\,R\,\rpabort {M'}C}{M\,R\,M'}
\end{array}
$$
\end{figure}

Due to Definition \ref{def:admissible-constructions-F}, the translation of proof terms can be given in a purely homomorphic fashion:
\begin{defn}
Given $M\in\ipc$, $\rpm M$ is defined by recursion on $M$ as in Fig. \ref{fig:translation-F}.
\end{defn}

\begin{figure}[t]\caption{The Russell-Prawitz translation of proof expressions}\label{fig:translation-F}
$$
\begin{array}{rcll}
\rpm x&=&x&\\
\rpm{(\lb x^A.M)}&=&\lb x^{\rpm A}.\rpm M&\\
\rpm{\pair MN}&=&\pair{\rpm M}{\rpm N}&\\
\rpm{(\injn iMAB)}&=&\rpinjn i{\rpm M}{\rpm A}{\rpm B}&\textrm{($i=1,2$)}\\
\rpm{(\E\hole M)}&=&\rpm{\E}\hole{\rpm M}&\\
\rpm{(\case M{x^A}P{y^B}QC)}&=&\rpcase{\rpm M}{x^{\rpm A}}{\rpm P}{y^{\rpm B}}{\rpm Q}{\rpm C}&\\
\rpm{(\abort MA)}&=&\rpabort{\rpm M}{\rpm A}&\\
&&&\\
\rpm{(\ehole N)}&=&\ehole\rpm N\\
\rpm{(\proj i{\ehole})}&=&\proj i{\ehole}
\end{array}
$$
\end{figure}

\noindent Notice that $\rpm{(MN)}=\rpm M\rpm N$ and $\rpm{(\proj iM)}=\proj i{\rpm M}$.

Observe the use of the type information provided by the last argument of $\casesymb$- and $\abortsymb$-expressions: from $C$ in $\case M{x^A}P{y^B}QC$ we determine the argument $\rpm C$ required by $\rpcasesymb$; from $A$ in $\abort MA$ we determine the argument $\rpm{A}$ for $\rpabortsymb$.\footnote{If the proof terms of $\ipc$ had been defined without the last type parameter in $\casesymb$- and $\abortsymb$-expressions, instead of a translation of proof terms, we would have a translation of typing derivations.} 

\begin{prop}[Type soundness] If $\Gamma\vdash M:A$ in $\ipc$, then $\rpm{\Gamma}\vdash\rpm M:\rpm A$ in $\f$.
\end{prop}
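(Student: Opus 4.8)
The plan is to prove type soundness by a straightforward induction on the derivation of $\Gamma\vdash M:A$ in $\ipc$, equivalently by induction on the structure of the proof term $M$. The translation $\rpm{(\cdot)}$ on proof terms is defined homomorphically (Fig.~\ref{fig:translation-F}), and the translation on formulas is likewise homomorphic, so the induction should mirror the clauses of both definitions case by case. In each case I would assume the claim for the immediate subterms (the induction hypotheses) and show that the translated term is typable with the translated type in the translated environment $\rpm{\Gamma}$, using the typing rules of $\f$ together with the derived rules already established in Fig.~\ref{fig:admissible-typing-rules-F}.

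First I would dispatch the cases where the relevant connective is common to $\ipc$ and $\f$. For a variable $x:A$, the translation is $x$ and we use $Ass$. For $\lb x^A.M$ of type $A\supset B$, the induction hypothesis gives $\rpm{\Gamma},x:\rpm A\vdash\rpm M:\rpm B$, and $\supset\! I$ yields $\rpm{\Gamma}\vdash\lb x^{\rpm A}.\rpm M:\rpm A\supset\rpm B=\rpm{(A\supset B)}$; here I rely on the homomorphic clause $\rpm{(A\supset B)}=\rpm A\supset\rpm B$ for the types to match. The cases for application $MN$ and for the pairing/projection constructors of conjunction are entirely analogous, using $\supset\! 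E$, $\wedge I$, and $\wedge E_i$ respectively, together with the observations $\rpm{(MN)}=\rpm M\rpm N$ and $\rpm{(\proj iM)}=\proj i{\rpm M}$ noted after Fig.~\ref{fig:translation-F}.

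The more interesting cases are disjunction and absurdity, which are exactly where the Russell-Prawitz encoding does its work. For $\injn iMAB$, the translation is $\rpinjn i{\rpm M}{\rpm A}{\rpm B}$; by the induction hypothesis $\rpm{\Gamma}\vdash\rpm M:\rpm{A_i}$, so the first derived rule of Fig.~\ref{fig:admissible-typing-rules-F} gives type $\rpm A\dvee\rpm B=\rpm{(A\vee B)}$. For $\case M{x^A}P{y^B}QC$, the three induction hypotheses supply $\rpm{\Gamma}\vdash\rpm M:\rpm A\dvee\rpm B$, $\rpm{\Gamma},x:\rpm A\vdash\rpm P:\rpm C$, and $\rpm{\Gamma},y:\rpm B\vdash\rpm Q:\rpm C$, and the second derived rule yields $\rpm{\Gamma}\vdash\rpcase{\rpm M}{x^{\rpm A}}{\rpm P}{y^{\rpm B}}{\rpm Q}{\rpm C}:\rpm C$; note this is precisely where the last type parameter $C$ is consumed, matching the side remark on the use of that annotation. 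The $\abort MA$ case is analogous via the third derived rule, using $\rpm\perp=\dperp$.

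I expect no genuine obstacle here, since the derivability of the three rules in Fig.~\ref{fig:admissible-typing-rules-F} has already been asserted to be straightforward, and these rules are engineered to make the disjunction and absurdity cases go through mechanically. The only point demanding minor care is bookkeeping about type variables: the bound variable $X$ introduced in the definition of $\rpinjnsymb$ (and in $\dvee$, $\dperp$) is chosen fresh with $X\notin M,A,B$, so the proviso of $\forall I$ in the derived rule is met and no unintended capture occurs under type substitution; working modulo $\alpha$-equivalence as stipulated, this is routine. Since $\Gamma$ is finite and $X\notin\rpm A,\rpm B$ by construction, $X$ occurs free in no type of $\rpm\Gamma$, which is all that $\forall I$ requires.
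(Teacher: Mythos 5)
Your proof is correct and follows essentially the same route as the paper: the paper's (deliberately brief) argument is precisely a recursion on the $\ipc$ derivation, with the homomorphic clauses handling $\supset$ and $\wedge$ and the derived rules of Fig.~\ref{fig:admissible-typing-rules-F} handling $\vee$ and $\perp$, which is exactly your case analysis. Your added remark on the freshness of $X$ and the $\forall I$ proviso is the (routine) content hidden inside the derivability of those rules, so nothing is missing.
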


The easy proof of this proposition determines a transformation of derivations in $\ipc$ into derivations in $\f$, a transformation defined by recursion on the given derivation in $\ipc$, based on the admissibility in $\f$ of each inference rule of $\ipc$.

We now argue that such a transformation is already implicit in \cite{Prawitz65}. For this discussion, let disjunction be a primitive connective of $\mathbf{NI}^2$ (here, contrary to \cite{Prawitz65}, we will ignore the second-order existential quantifier). This corresponds to extending system $\f$ with the type former $A\vee B$ and the constructions $\injn iMAB$ ($i=1,2$) and $\case M{x^A}P{y^B}QC$. Prawitz \cite{Prawitz65} shows that in $\mathbf{NI}^2$ the connectives $\wedge$, $\vee$ and $\perp$ are definable operations. For instance, in the case of disjunction, this means that $(A_1\vee A_2)\supset (A_1\dvee A_2)$ and $(A_1\dvee A_2)\supset (A_1\vee A_2)$ are theorems of $\mathbf{NI}^2$. The proof in \cite{Prawitz65}, in terms of the extended system $\f$, amounts to the following derivable sequents:
\begin{equation}\label{eq:Prawitz1}
y:A_1\vee A_2\vdash \Lambda X\lambda w^{(A_1\supset X)\wedge(A_2\supset X)}.\case{y}{x_1}{w1x_1}{x_2}{x_2}{w2x_2}:A_1\dvee A_2
\end{equation}
\begin{equation}\label{eq:Prawitz2}
z:A_1\dvee A_2\vdash z(A_1\vee A_2)\pair{\lb x_1.\injn 1{x_1}{A_1}{A_2}}{\lb x_2.\injn 2{x_2}{A_1}{A_2}}:A_1\vee A_2
\end{equation}
This is very close to show the admissibility of the introduction and elimination rules for $A_1\vee A_2$ (the first two rules of Fig.~\ref{fig:admissible-typing-rules-F}). Given $\Gamma\vdash M:A_i$, from (\ref{eq:Prawitz1}) we get
$$
\Gamma\vdash \Lambda X\lambda w^{(A_1\supset X)\wedge(A_2\supset X)}.\case{\injn iM{A_1}{A_2}}{x_1}{w1x_1}{x_2}{x_2}{w2x_2}:A_1\dvee A_2
$$
Applying $\betad$, the term reduces to $\Lambda X\lambda w^{(A_1\supset X)\wedge(A_2\supset X)}.wiM$. 
On the other hand, given $\Gamma,x_i:A_i\vdash P_i:C$, for $i=1,2$, a variation of (\ref{eq:Prawitz2}) gives
$$
\Gamma,z:A_1\dvee A_2\vdash zC\pair{\lb x_1.P_1}{\lb x_2.P_2}:C
$$
So, if we are further given $\Gamma\vdash M:A_1\dvee A_2$, we obtain
$$
\Gamma\vdash MC\pair{\lb x_1.P_1}{\lb x_2.P_2}:C
$$

\section{Atomization of universal instantiation}\label{sec:atomization}

In this section we add to system $\f$ extra conversions $\atd$ and $\ata$ which promote the atomization of universal instantiation.
We show typable terms have unique ``atomic'' normal forms.
We also propose new conversions $\ccd$ and $\cca$, which postulate the commuting principles for the derived connectives of disjunction and absurdity, and which are simple variants of a general commuting principle introduced in \cite{TranchiniPistonePetrolo2019}\footnote{Notice, however, that here, contrary to \cite{TranchiniPistonePetrolo2019}, in the formulation of the commuting principles $\ccd$ and $\cca$, we do not constraint ourselves to formulas obeying certain restrictions in the polarity of the occurrences of type variables.}. The latter principle adds ``naturality'' to natural deduction, according to \cite{TranchiniPistonePetrolo2019}. Here we work out in detail the relationship of $\ccd$ and $\cca$ with a more general ``dinaturality'' principle. The relationship between $\atd$, $\ata$ and $\ccd$, $\cca$ is also worked put. Since it has been proved that adding the dinaturality principle to system $\f$ does not make the system inconsistent \cite{BainbridgeFreydScedrovScottTCS90}, the same follows about adding $\ccd$ and $\cca$. We profit from the relationship among all the new conversions to prove that extending system $\f$ with $\atd$ and $\ata$ does not bring inconsistency. We also show the simulation theorem for the Russell-Prawitz translation, which makes use of the atomization conversions.

\subsection{New conversions for system $\f$}

The $\atd$- and $\ata$-redexes are terms of the form $\rpcase MxPyQC$ and $\rpabort MC$, respectively, where $C$ is not atomic. Therefore, such redexes include a universal instantiation with a non-atomic formula $C$. The common purpose of each of the $\atd$- and $\ata$-conversion rules is to replace one such instantiation by another with a sub-formula of $C$. Since there is a common purpose, the two conversion rules are denoted with symbols ``$\atd$'' and ``$\ata$'' which are variant of each other.

\begin{defn}[Atomization conversion rules in $\f$]\label{def:atomization-rules}
\begin{enumerate}
\item A $\atd$-redex is a term of the form $M C \pair{\lb x^A.P}{\lb y^B.Q}$, where $C$ is not atomic. There are three $\atd$-conversion rules for this redex, depending on the form of $C$:
$$
\begin{array}{rcl}
M(C_1\supset C_2)\pair{\lb x^{A}.P}{\lb y^{B}.Q}&\to&\lb z^{C_1}.M C_2 \pair{\lb x^{A}.Pz}{\lb y^{B}.Qz}\\
M(C_1\wedge C_2)\pair{\lb x^{A}.P}{\lb y^{B}.Q}&\to&\ipair i{ MC_i\pair{\lb x^{A}.Pi}{\lb y^{B}.Qi}}\\
M(\forall Y.D)\pair{\lb x^{A}.P}{\lb y^{B}.Q}&\to&\Lb Y. MD\pair{\lb x^{A}.PY}{\lb y^{B}.QY}
\end{array}
$$
\noindent where $z\neq x$, $z\neq y$, $z\notin P,Q,M$; and $Y\notin P,Q,M,A,B$.

\item A $\ata$-redex is a term of the form $M C$, where $C$ is not atomic. There are three $\ata$-conversion rules for this redex, depending on the form of $C$:
$$
\begin{array}{rcl}
M(C_1\supset C_2)&\to&\lb z^{C_1}.M C_2\\
M(C_1\wedge C_2)&\to&\ipair i{ MC_i}\\
M(\forall Y.D)&\to&\Lb Y. MD
\end{array}
$$
\noindent where $z\notin M$; and $Y\notin M$.
\end{enumerate}
\end{defn}
For now, a $\atd$-redex always contains a $\ata$-redex. Typing constraints will later forbid this situation. However, typing considerations for these rules and the discussion of subject reduction are postponed to Subsection \ref{subsec:properties-typed-atomization}.

We now introduce a variant of the atomization conversion $\atd$.

\begin{defn} \label{def:delta} The $\delta$-conversion rules are as follows:
$$
\begin{array}{rcl}
M(C_1\supset C_2)\pair{\lb x^{A}\lb z^{C_1}.P}{\lb y^{B}\lb z^{C_1}.Q}&\to&\lb z^{C_1}.MC_2\pair{\lb x^{A}.P}{\lb y^{B}.Q}\\
M(C_1\wedge C_2)\pair{\lb x^{A}.\ipair i{P_i}}{\lb y^{B}.\ipair i{Q_i}}&\to&\ipair i{ MC_i\pair{\lb x^{A}.P_i}{\lb y^{B}.Q_i}}\\
M(\forall Y.D)\pair{\lb x^{A}\Lb Y.P}{\lb y^{B}\Lb Y.Q}&\to&\Lb Y. MD\pair{\lb x^{A}.P}{\lb y^{B}.Q}
\end{array}
$$
\end{defn}

A $\delta$-redex is a particular form of $\atd$-redex which, if reduced by $\atd$, generates two $\beta$-redexes (actually four, in the case of conjunction). If these are reduced away immediately, we obtain the effect of $\delta$-reduction. Conversely, a $\atd$-redex, if $\eta$-expanded, can be reduced with $\delta$ instead of $\atd$. So, $\delta$ and $\atd$ are related via $\beta\eta$-conversions, as the next result shows.


\begin{prop}[Variants of atomization]\label{prop:variants-of-atomization} Let $M,N\in \f$.
\begin{enumerate}
\item If $M\to_{\delta}N$ then $M\to^+_{\atd\beta}N$.
\item If $M\to_{\atd}N$ then $M=_{\delta\eta}N$.
\end{enumerate}
\end{prop}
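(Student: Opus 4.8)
The plan is to prove both items by a case analysis on the connective that determines the shape of the redex, namely the three cases $C_1\supset C_2$, $C_1\wedge C_2$ and $\forall Y.D$. I would argue first at the level of the rewrite rules (root contractions) and then lift to the compatible closures. Since $\to_{\atd}$, $\to_{\delta}$, $\to_{\beta}$ and $\to_{\eta}$ are by definition the compatible closures of their respective rules, it suffices to treat a redex occurring at the root: the very same sequence of contractions can be replayed inside any surrounding context, so a root-level simulation $R\to\cdots\to R'$ immediately gives, for $M=\mathcal{C}[R]$ and $N=\mathcal{C}[R']$, the stated relation between $M$ and $N$.

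For the first item, the key observation is that every $\delta$-redex is literally an instance of a $\atd$-redex. In the implication case, $M(C_1\supset C_2)\pair{\lb x^{A}\lb z^{C_1}.P}{\lb y^{B}\lb z^{C_1}.Q}$ is the $\atd$-redex $M(C_1\supset C_2)\pair{\lb x^{A}.P'}{\lb y^{B}.Q'}$ with $P'=\lb z^{C_1}.P$ and $Q'=\lb z^{C_1}.Q$. Contracting it with $\atd$ (choosing a fresh bound variable $w$) produces $\lb w^{C_1}.MC_2\pair{\lb x^{A}.(\lb z^{C_1}.P)w}{\lb y^{B}.(\lb z^{C_1}.Q)w}$, where the two applications $(\lb z^{C_1}.P)w$ and $(\lb z^{C_1}.Q)w$ are $\betai$-redexes; contracting both and renaming $w$ back to $z$ yields exactly the $\delta$-contractum $\lb z^{C_1}.MC_2\pair{\lb x^{A}.P}{\lb y^{B}.Q}$. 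The conjunction case is analogous, taking $P'=\ipair i{P_i}$ and $Q'=\ipair i{Q_i}$: the $\atd$-step then creates the four projection redexes $\proj 1{P'}$, $\proj 2{P'}$, $\proj 1{Q'}$, $\proj 2{Q'}$, which are $\betac$-redexes whose contraction gives the $\delta$-contractum. The $\forall$ case is handled the same way with $P'=\Lb Y.P$ and $Q'=\Lb Y.Q$, the created applications $(\Lb Y.P)Z$ and $(\Lb Y.Q)Z$ being $\betaall$-redexes. In every case the $\delta$-redex reduces to its contractum by one $\atd$-step followed by two (for $\supset$ and $\forall$) or four (for $\wedge$) $\beta$-steps, so $M\to^+_{\atd\beta}N$.

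For the second item, the dual observation is that an $\atd$-redex $\eta$-expands into a $\delta$-redex whose $\delta$-contractum coincides with the $\atd$-contractum. In the implication case, the two bodies $P$ and $Q$ of $M(C_1\supset C_2)\pair{\lb x^{A}.P}{\lb y^{B}.Q}$ satisfy $P=_{\etai}\lb z^{C_1}.Pz$ and $Q=_{\etai}\lb z^{C_1}.Qz$ for $z$ fresh, so, performing these $\eta$-expansions inside the pair,
$$M(C_1\supset C_2)\pair{\lb x^{A}.P}{\lb y^{B}.Q}=_{\eta}M(C_1\supset C_2)\pair{\lb x^{A}\lb z^{C_1}.Pz}{\lb y^{B}\lb z^{C_1}.Qz}\to_{\delta}\lb z^{C_1}.MC_2\pair{\lb x^{A}.Pz}{\lb y^{B}.Qz},$$
and the right-hand side is precisely the $\atd$-contractum. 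The conjunction case uses $P=_{\etac}\ipair i{\proj iP}$ and $Q=_{\etac}\ipair i{\proj iQ}$ to reach the conjunction $\delta$-redex, and the $\forall$ case uses $P=_{\etaall}\Lb Y.PY$ and $Q=_{\etaall}\Lb Y.QY$. In each case the $\atd$-redex is $=_\eta$ to a $\delta$-redex that $\delta$-reduces to the $\atd$-contractum, and since $=_\eta$ and $\to_\delta$ are both contained in $=_{\delta\eta}$, transitivity gives $M=_{\delta\eta}N$.

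The only genuinely delicate point, and where I would spend the care, is the bookkeeping of $\alpha$-equivalence and the freshness side conditions carried by the rules. In the first item the bound variable $z$ (respectively the type variable $Y$) already present in the $\delta$-redex must be kept disjoint from the fresh variable demanded by the $\atd$-rule, so that after the $\beta$-contractions a renaming recovers the $\delta$-contractum on the nose. In the second item the variable $z$ (respectively $Y$) introduced by $\eta$-expansion must be chosen fresh for $P$, $Q$ and $M$ so that the resulting term matches the literal form of the $\atd$-contractum. Both are routine modulo $\alpha$-equivalence, but they have to be stated explicitly precisely because the atomization and $\eta$-rules come with their own provisos.
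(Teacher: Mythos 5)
Your proposal is correct and follows essentially the same route as the paper's own proof: item 1 by reading each $\delta$-redex as a $\atd$-redex and contracting the created $\beta$-redexes (two for $\supset$ and $\forall$, four for $\wedge$), item 2 by $\eta$-expanding the branches of the $\atd$-redex into a $\delta$-redex, with the lift from root contractions to arbitrary contexts justified by compatibility of $\to^+_{\atd\beta}$ and $=_{\delta\eta}$ (the paper phrases this as an induction with routine inductive cases). Your explicit attention to the freshness/$\alpha$-renaming bookkeeping matches the renaming steps the paper carries out in its displayed calculations.
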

\begin{proof} We have to do an induction on $M\to_{\delta}N$ and another on $M\to_{\atd}N$. In both proofs, the inductive cases are routine because the relations $\to^+_{\atd\beta}$ and $=_{\delta\eta}$ are compatible. In each proof there are 3 base cases. We just illustrate with two base cases for the first assertion and one for the second.
$$
\begin{array}{cl}
&M(C\supset D)\pair{\lb x^A\lb z^C.P}{\lb y^B\lb z^C.Q}\\
\to_{\atd}&\lb w^C.MD\pair{\lb x^A.(\lb z^C.P)w}{\lb y^B.(\lb z^C.Q)w}\\
\to_{\beta}^2&\lb w^C.MD\pair{\lb x^{A}.[w/z]P}{\lb y^{B}.[w/z]Q}\\
=&\lb z^C.MD\pair{\lb x^{A}.P}{\lb y^{B}.Q}
\end{array}
$$

$$
\begin{array}{cl}
&M(\forall Y.D)\pair{\lb x^A\Lb Y.P}{\lb y^B\Lb Y.Q}\\
=&M(\forall Y.D)\pair{\lb x^A\Lb Z.[Z/Y]P}{\lb y^B\Lb Z.[Z/Y]Q}\\
\to_{\atd}&\Lb Y.MD\pair{\lb x^A.(\Lb Z.[Z/Y]P)Y}{\lb y^B.(\Lb Z.[Z/Y]Q)Y}\\
\to_{\beta}^2&\Lb Y.MD\pair{\lb x^{A}.P}{\lb y^{B}.Q}
\end{array}
$$

$$
\begin{array}{cl}
&M(C_1\wedge C_2)\pair{\lb x^{A}.P}{\lb y^{B}.Q}\\
\leftarrow_{\eta}&M(C_1\wedge C_2)\pair{\lb x^{A}.\ipair i{Pi}}{\lb y^{B}.\ipair i{Qi}}\\
\to_{\delta}&\ipair i{ MC_i\pair{\lb x^{A}.Pi}{\lb y^{B}.Qi}}
\end{array}
$$

\end{proof}

According to the previous result, a $\delta$-reduction step can be broken into a $\atd\beta$-reduction sequence, but a $\atd$-reduction step can be derived only as a $\delta\eta$-equality. Given our insistence on reduction, rather than mere equality, in the main results to be shown below, the previous result is an argument to take the $\atd$-conversion rules as primitive, instead of the $\delta$-conversion rules.

The $\delta$-conversions pull down an introduction inference with which the two branches $P$ and $Q$ of a $\rpcase MxPyQC$ end. Dually, a commuting conversion pushes up to the two branches $P$ and $Q$ of a $\rpcase MxPyQC$ an elimination inference of which the mentioned $\rpcasesymb$ is main premiss.
\begin{defn}[Commuting conversion rules in $\f$]\label{def:epsilon}\quad
\begin{enumerate}
\item The $\ccd$-conversion rules are as follows:
$$
\begin{array}{rcl}
M(C_1\supset C_2)\pair{\lb x^{A}.P}{\lb y^{B}.Q}N & \to & MC_2\pair{\lb x^{A}.PN}{\lb y^{B}.QN}\\
M(C_1\wedge C_2)\pair{\lb x^{A}.P}{\lb y^{B}.Q}i & \to & MC_i\pair{\lb x^{A}.Pi}{\lb y^{B}.Qi}\\
M(\forall Y.C')\pair{\lb x^{A}.P}{\lb y^{B}.Q}C'' & \to & M([C''/Y]C')\pair{\lb x^{A}.PC''}{\lb y^{B}.QC''}
\end{array}
$$

\item The $\cca$-conversion rules are as follows:
$$
\begin{array}{rcl}
M(C_1\supset C_2)N & \to & MC_2\\
M(C_1\wedge C_2)i & \to & MC_i\\
M(\forall Y.C')C'' & \to & M([C''/Y]C')
\end{array}$$
\end{enumerate}
\end{defn}
\noindent We leave it to the reader to rewrite these rules in terms of $\rpcasesymb$ and $\rpabortsymb$.

Recall the discussion on how to define commutative conversions in $\ipc$. In $\f$, $\ccd$ is stricty contained in the following auxiliary rule:
$$(\ccd')\qquad\E\hole{\rpcase M{x^{A}}P{y^{B}}QC}\to\rpcase M{x^{A}}{\E\hole{P}}{y^{B}}{\E\hole{Q}}D\enspace.$$
The two rules are not the same due to the fact that in $\ccd'$ no connections is imposed on $C$ and $D$. If we wanted to constrain this rule to become equal to $\ccd$, we would define it only for certain pairs $(\E,C)$, with $D=D(\E,C)$ determined by $(\E,C)$ as follows:
\begin{equation}\label{eq:constraints-def-ccd}
\begin{array}{rcl}
D(\ehole N, E\supset F)&=&F\\
D(\ehole i,C_1\wedge C_2)&=&C_i \qquad\qquad(i=1,2)\\
D(\ehole E,\forall X.C_0)&=&[E/X]C_0
\end{array}
\end{equation}

Similar remarks apply to $\cca$ versus the auxiliary rule
$$(\cca')\qquad \E\hole{\rpabort MC}\to\rpabort MD\enspace.$$

Commutative conversions rules are named with symbols ``$\ccd$'' and ``$\cca$'', which are a variant of each other, again as a reminder that they express related commuting principles. That relation will be even more evident when reducing typable terms; but, as before with atomization and $\delta$, we postpone to Subsection \ref{subsec:properties-typed-atomization} all considerations about subject reduction and typing in connection with commutative conversions.

In all cases of the $\ccd$- and $\cca$-conversion rules, a universal instantiation with formula $C$ is replaced by another with some formula $D$, and in all cases $D$ is a sub-formula of $C$, except when $C=\forall Y.C'$ and $\E=\ehole C''$, for some formulas $C'$ and $C''$, in which case $D=[C''/Y]C'$. Let us compare $\atd$ with $\ccd$ in this situation:
$$
\begin{array}{rcl}
\rpcase MxPyQ{\forall Y.C'} & \to_{\atd} & \Lambda Y.\rpcase Mx{PY}y{QY}{C'}\\
\rpcase MxPyQ{\forall Y.C'}C'' & \to_{\ccd} & \rpcase Mx{PC''}y{QC''}{[C''/Y]C'}
\end{array}
$$
Starting from the $\ccd$-redex, an obvious alternative is to apply, not rule $\ccd$, but rule $\atd$ instead, to reach the intermediate term $(\Lambda Y.\rpcase Mx{PY}y{QY}{C'})C''$. The effect of $\ccd$-reduction is obtained by a further $\beta$-reduction step. In fact, each $\ccd$- and $\cca$-reduction step has a similar decomposition, as the first two items of the next result show.


\begin{prop}[Atomization vs commuting conversion]\label{prop:atomization-vs-cc} Let $M,N\in \f$.
\begin{enumerate}
\item If $M\to_{\ccd}N$ then $M\to^2_{\atd \beta}N$.
\item If $M\to_{\cca}N$ then $M\to^2_{\ata \beta}N$.
\item If $M\to_{\atd}N$ then $M=_{\ccd\eta}N$.
\item If $M\to_{\ata}N$ then $M=_{\cca\eta}N$.
\end{enumerate}
\end{prop}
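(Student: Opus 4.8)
The plan is to mirror the proof of Proposition~\ref{prop:variants-of-atomization}: establish each of the four items by induction on the generating one-step relation, one induction per item. In every induction the compatible (inductive) cases are routine, because each target relation is compatible. The fixed-length relations $\to^2_{\atd\beta}$ and $\to^2_{\ata\beta}$ are compatible since one may perform the very same two contractions inside any surrounding context, and $=_{\ccd\eta}$ and $=_{\cca\eta}$ are compatible because they are the equivalence closures of compatible relations. So the work concentrates on the base cases, of which there are three per item, one for each non-atomic shape of the instantiated formula $C$, namely $C=C_1\supset C_2$, $C=C_1\wedge C_2$, and $C=\forall Y.C'$.

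For items 1 and 2 (the forward decompositions) the idea, already hinted at in the remark preceding the statement, is that a $\ccd$- or $\cca$-redex literally contains an atomization redex as its head, and contracting that redex first creates a $\beta$-redex whose contraction yields exactly the intended contractum. Concretely, in the implication case of item~1 I would rewrite $MC\pair{\lb x^A.P}{\lb y^B.Q}N$ by first applying $\atd$ to the head $MC\pair{\lb x^A.P}{\lb y^B.Q}$, reaching $(\lb z^{C_1}.MC_2\pair{\lb x^A.Pz}{\lb y^B.Qz})N$, and then applying $\betai$; the side condition $z\notin M,P,Q$ guarantees that the substitution turns $Pz,Qz$ into $PN,QN$, reproducing the $\ccd$-contractum in exactly two steps. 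The conjunction and universal cases are analogous, the created $\beta$-redex being a $\betac$- respectively a $\betaall$-redex; item~2 is the same argument with the $\rpabortsymb$-shaped redexes and the rules $\ata$, $\cca$ in place of $\atd$, $\ccd$.

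For items 3 and 4 (the converse) the idea is dual: one $\eta$-expands the head of an atomization redex to expose a commuting redex. For a $\atd$-redex $W=MC\pair{\lb x^A.P}{\lb y^B.Q}$ of non-atomic type $C$, I would $\eta$-expand $W$ using the $\eta$-rule matching the shape of $C$, that is $\etai$ if $C=C_1\supset C_2$, $\etac$ if $C=C_1\wedge C_2$, and $\etaall$ if $C=\forall Y.C'$. For instance in the implication case $W\leftarrow_{\etai}\lb z^{C_1}.Wz$, and $Wz$ is now a $\ccd$-redex whose contraction gives $\lb z^{C_1}.MC_2\pair{\lb x^A.Pz}{\lb y^B.Qz}$, i.e.\ the $\atd$-contractum; hence $M=_{\ccd\eta}N$. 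The conjunction case uses $W\leftarrow_{\etac}\pair{W1}{W2}$ followed by two $\ccd$-steps, and the universal case uses $W\leftarrow_{\etaall}\Lb Y.WY$ followed by one $\ccd$-step. Item~4 is identical with $\ata$, $\cca$ in place of $\atd$, $\ccd$.

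The step I expect to be delicate is the universal-quantifier base case, where type substitution must be tracked carefully. In item~1 the correctness of the $\betaall$-step rests on the side conditions $Y\notin M,P,Q,A,B$ of the $\atd$-rule: these ensure that $[C''/Y]$ acts only on the body $C'$, producing $M([C''/Y]C')$, and converts $PY,QY$ into $PC'',QC''$, matching the $\ccd$-contractum. In items 3 and 4 one must check both that $Y$ does not occur free in $W$, so that the expansion $W\leftarrow_{\etaall}\Lb Y.WY$ is legitimate --- this holds because $Y$ is bound inside the type $\forall Y.C'$ and, by the side conditions, absent from $M,P,Q,A,B$ --- and that the ensuing $\ccd$- (resp.\ $\cca$-) step instantiates with $C''=Y$, so that $[Y/Y]C'=C'$ returns precisely the atomization contractum.
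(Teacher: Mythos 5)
Your proposal is correct and follows essentially the same route as the paper's proof: atomize the head and then contract the created $\beta$-redex for items 1--2, and $\eta$-expand the redex (according to the shape of $C$) followed by $\ccd$/$\cca$-steps for items 3--4, with the inductive cases dispatched by compatibility. The only cosmetic difference is in the universal base case of items 3--4, where you $\eta$-expand directly with the bound variable $Y$ (legitimate by the freshness side conditions $Y\notin M,P,Q,A,B$), whereas the paper expands with a fresh $X$ and then identifies $\Lb X.M([X/Y]C_0)\pair{\lb x^{A}.PX}{\lb y^{B}.QX}$ with $\Lb Y.MC_0\pair{\lb x^{A}.PY}{\lb y^{B}.QY}$ up to $\alpha$-equivalence.
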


\begin{proof} We give four proofs by induction. We never show the inductive cases, which are routine.

The first assertion is proved by induction on $M\to_{\ccd}N$. There are three cases to consider, where the third corresponds to the discussion just before this propostition.
$$
\begin{array}{rcl}
M(C_1\supset C_2)\pair{\lb x^{A}.P}{\lb y^{B}.Q}N 
&\to_{\atd}&(\lb z^{C_1}.MC_2\pair{\lb x^{A}.Pz}{\lb y^{B}.Qz})N\\
&\to_{\beta}&MC_2\pair{\lb x^{A}.PN}{\lb y^{B}.QN}
\end{array}
$$
$$
\begin{array}{rcl}
M(C_1\wedge C_2)\pair{\lb x^{A}.P}{\lb y^{B}.Q}i 
&\to_{\atd}&(\ipair{j}{MC_j\pair{\lb x^{A}.Pj}{\lb y^{B}.Qj}})i\\
&\to_{\beta}&MC_i\pair{\lb x^{A}.Pi}{\lb y^{B}.Qi} 
\end{array}
$$
$$
\begin{array}{rcl}
M(\forall Y.C')\pair{\lb x^{A}.P}{\lb y^{B}.Q}C'' 
&\to_{\atd}&(\Lb Y.MC'\pair{\lb x^{A}.PY}{\lb y^{B}.QY})C''\\
&\to_{\beta}&M([C''/Y]C')\pair{\lb x^{A}.PC''}{\lb y^{B}.QC''} 
\end{array}
$$

For the second assertion, we proceed by induction on $M\to_{\ata}N$. The three cases of the base are proved by simple calculations.
$$
\begin{array}{rcccl}
M(C_1\supset C_2)N&\to_{\ata}&(\lb z^{C_1}.MC_2)N&\to_{\beta}&MC_2\\
M(C_1\wedge C_2)i&\to_{\ata}&(\ipair{j}{MC_j})i&\to_{\beta}& MC_i\\
M(\forall Y.C')C''&\to_{\ata}&(\Lb Y.MC')C''&\to_{\beta}&M([C''/Y]C')
\end{array}
$$

The third assertion is proved by induction on $M\to_{\atd} N$. There are three base cases.
$$
\begin{array}{rcl}
M(C_1\supset C_2)\pair{\lb x^{A}.P}{\lb y^{B}.Q} 
&\leftarrow_{\eta}&\lb z^{C_1}.(M(C_1\supset C_2)\pair{\lb x^{A}.P}{\lb y^{B}.Q})z\\
&\to_{\ccd}&\lb z^{C_1}.MC_2\pair{\lb x^{A}.Pz}{\lb y^{B}.Qz} 
\end{array}
$$
$$
\begin{array}{rcl}
&&M(C_1\wedge C_2)\pair{\lb x^{A}.P}{\lb y^{B}.Q}\\
&\leftarrow_{\eta}&\pair {(M(C_1\wedge C_2)\pair{\lb x^{A}.P}{\lb y^{B}.Q})1}{(M(C_1\wedge C_2)\pair{\lb x^{A}.P}{\lb y^{B}.Q})2}\\
&\to^2_{\ccd}&\pair {MC_1\pair{\lb x^{A}.P1}{\lb y^{B}.Q1}}{MC_2\pair{\lb x^{A}.P2}{\lb y^{B}.Q2}}\\
&=&\ipair i{ MC_i\pair{\lb x^{A}.Pi}{\lb y^{B}.Qi}} 
\end{array}
$$
$$
\begin{array}{rll}
M(\forall Y.C_0)\pair{\lb x^{A}.P}{\lb y^{B}.Q} 
&\leftarrow_{\eta}&\Lb X.(M(\forall Y.C_0)\pair{\lb x^{A}.P}{\lb y^{B}.Q})X\\
&\to_{\ccd}&\Lb X.M([X/Y]C_0)\pair{\lb x^{A}.PX}{\lb y^{B}.QX}\\
&=&\Lb Y.MC_0\pair{\lb x^{A}.PY}{\lb y^{B}.QY} 
\end{array}
$$

For the fourth assertion, the calculations of the base of the proof by induction on $M\to_{\ata}N$ are as follows.
$$
\begin{array}{rcccl}
M(C\supset D)&\leftarrow_{\eta}&\lb z^C.(M(C\supset D))z&\to_{\cca}&\lb z^C.MD\\
M(C_1\wedge C_2)&\leftarrow_{\eta}&\pair {(M(C_1\wedge C_2))1}{(M(C_1\wedge C_2))2}&\to^2_{\cca}&\pair {MC_1}{MC_2}\\
M(\forall Y.C_0)&\leftarrow_{\eta}&\Lb X.(M(\forall Y.C_0))X&\to_{\cca}&\Lb X.M[X/Y]C_0
\end{array}
$$
Notice $\Lb X.M[X/Y]C_0=\Lb Y.MC_0$ as required.
\end{proof}

In the same way as Prop.~\ref{prop:variants-of-atomization} is an argument to take $\atd$-conversion rules as primitive, instead of the $\delta$-conversion rules, Prop.~\ref{prop:atomization-vs-cc} is an argument to take the $\atd$-conversion rules (respectively the $\ata$-conversion rules) as primitive instead of the $\ccd$-conversion rules (respectively the $\cca$-conversion rules). As a consequence, we may state below results in terms of $\ccd\cca\delta$-reduction, knowing that they may be immediately restated in terms of $\atd\ata$-reduction, in view of Props.~\ref{prop:variants-of-atomization} and \ref{prop:atomization-vs-cc} -- see for instance Theorem \ref{thm:strict-simulation} below.

\subsection{Strict simulation}\label{subsec:strict-simulation}

It has been observed \cite{FerreiraFerreira2009,TranchiniPistonePetrolo2019} that the Russell-Prawitz translation does not yield a simulation of proof reduction. Next we show that, once $\f$ is added $\atd\ata$-conversions, a simulation of proof reduction occurs. The simulation is even \emph{strict}, in the sense that each reduction step in $\ipc$ is mapped to a non-empty reduction sequence in the enriched system $\f$.

\begin{thm}[Strict simulation]\label{thm:strict-simulation}
If $M_1\to M_2$ in $\ipc$ then $\rpm {M_1}\to^+_{\beta\eta\ccd\cca\delta}\rpm {M_2}$ in $\f$ (hence $\rpm {M_1}\to^+_{\beta\eta\atd\ata}\rpm {M_2}$ in $\f$). More precisely:
\begin{itemize}
\item Case $R\in\{\betai,\etai,\betac,\etac\}$. If $M_1\to_R M_2$ in $\ipc$ then $\rpm{M_1}\to_R\rpm{M_2}$ in $\f$.
\item Case $R=\betad$. If $M_1\to_R M_2$ in $\ipc$ then $\rpm{M_1}\to_{\beta}^+\rpm{M_2}$ in $\f$.
\item Case $R=\etad$. If $M_1\to_R M_2$ in $\ipc$ then $\rpm{M_1}\to_{\eta\delta}^+\rpm{M_2}$ in $\f$.
\item Case $R\in\{\pii,\pic,\pid,\pia\}$. If $M_1\to_R M_2$ in $\ipc$ then $\rpm{M_1}\to_{\ccd}^+\rpm{M_2}$ in $\f$.
\item Case $R\in\{\abi,\abc,\abd,\aba\}$. If $M_1\to_R M_2$ in $\ipc$ then $\rpm{M_1}\to_{\cca}^+\rpm{M_2}$ in $\f$.
\end{itemize}
\end{thm}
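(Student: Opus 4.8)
The plan is to induct on the derivation of $M_1\to_R M_2$, i.e.\ on the compatible closure of each rule $R$. Since $\rpm{(\cdot)}$ is homomorphic and each target relation $\to_\beta,\to_\eta,\to_\ccd,\to_\cca,\to_\delta$ is compatible, the inductive (compatible) cases are handled uniformly: a contraction inside an immediate subterm of $M_1$ is transported by the translation to a contraction inside the corresponding subterm of $\rpm{M_1}$, the needed compatibility being supplied by Lemma~\ref{lem:compatibility-F} for the derived constructors $\rpinjnsymb,\rpcasesymb,\rpabortsymb$ and being immediate for the primitive ones. So all the content sits in the base cases, one per rule, which I attack by computing both translations and exhibiting a connecting reduction. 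I would first record the substitution lemma $\rpm{([N/x]M)}=[\rpm N/x]\rpm M$ (a routine induction on $M$), needed for $\betai$ and $\betad$. The final ``hence'' clause then follows from Propositions~\ref{prop:variants-of-atomization} and \ref{prop:atomization-vs-cc}, which convert each $\ccd$-, $\cca$- or $\delta$-step into a nonempty $\atd\ata\beta$-reduction.

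The cases $\betai,\etai,\betac,\etac$ are immediate single steps of the homonymous $\f$-rule (using the substitution lemma for $\betai$). For the commuting-conversion cases the length of the reduction is dictated by the translation of the surrounding elimination context $\E$: each $\ccd$- (resp.\ $\cca$-) step consumes exactly one $\f$-elimination frame, and $\rpm{\E}$ carries one frame when $\E$ is an application, a projection, or an $\abortsymb$ (cases $\pii,\pic,\pia$ and $\abi,\abc,\aba$), but two frames --- a $\forall$-instantiation followed by an application of the encoding pair --- when $\E$ is a $\casesymb$ (cases $\pid$ and $\abd$). Accordingly $\pid$ and $\abd$ take a forall-step followed by an impl-step, the remaining cases take a single step; in each instance the instantiation type offered to the rule has the required shape because the output type of the case/abort is the $\rpm{(\cdot)}$-image of the matching $\ipc$ type, and the contractum coincides with $\rpm{M_2}$ after using $\rpm{(PN)}=\rpm P\,\rpm N$, $\rpm{(\proj iP)}=\proj i{\rpm P}$ and $\rpm{(\abort PC)}=\rpm P\,\rpm C$.

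The $\betad$ case requires unfolding $\rpinjnsymb$: the redex translates to $(\Lb X.\lb w.\proj i w\,\rpm M)\,\rpm C\,\pair{\lb x_1.\rpm{P_1}}{\lb x_2.\rpm{P_2}}$, which I reduce by a cascade of four $\beta$-steps --- $\betaall$ (instantiating $X:=\rpm C$), $\betai$ (feeding in the pair), $\betac$ (selecting the $i$-th component) and a final $\betai$ --- reaching $[\rpm M/x_i]\rpm{P_i}=\rpm{([M/x_i]P_i)}$; freshness of $X$ (namely $X\notin\rpm M$) guarantees the first substitution touches only the type annotation of $w$.

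I expect $\etad$ to be the main obstacle, as it is the only case mixing $\delta$ with a genuine chain of $\eta$-reductions. The redex translates to $\rpm M\,(\rpm A\dvee\rpm B)\,\pair{\lb x.\rpinjn 1x{\rpm A}{\rpm B}}{\lb y.\rpinjn 2y{\rpm A}{\rpm B}}$ and the target is bare $\rpm M$. Because $\rpm A\dvee\rpm B=\forall X.(\ldots)\supset X$ and each branch begins with $\Lb X$ (from $\rpinjnsymb$), the term is a universal $\delta$-redex; contracting it and then the implicational $\delta$-redex it exposes yields $\Lb X.\lb w.\rpm M\,X\,\pair{\lb x.\proj 1 w\,x}{\lb y.\proj 2 w\,y}$. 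The remaining work is pure $\eta$-cleanup: two $\etai$-steps send $\lb x.\proj 1 w\,x$ and $\lb y.\proj 2 w\,y$ to $\proj 1 w$ and $\proj 2 w$, an $\etac$-step collapses the pair to $w$, an $\etai$-step removes $\lb w$, and a final $\etaall$-step removes $\Lb X$, leaving $\rpm M$. The only delicate points are spotting the two nested $\delta$-redexes in the right order and checking the freshness side-conditions ($X\notin\rpm M$, and $w$ absent where required) that license the $\eta$-steps; none is deep, but this is the case that accounts for the announced $\to^+_{\eta\delta}$.
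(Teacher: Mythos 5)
Your proposal is correct and takes essentially the same route as the paper's proof: induction on the compatible closure, inductive cases by compatibility (exactly what Lemma~\ref{lem:compatibility-F} supplies for the derived constructors), and base cases by the same explicit reduction sequences --- the $\betaall,\betai,\betac,\betai$ cascade for $\betad$, the two $\delta$-steps followed by the $\etai,\etai,\etac,\etai,\etaall$ chain for $\etad$, a single $\ccd$- (resp.\ $\cca$-) step for $\pii,\pic,\pia$ (resp.\ $\abi,\abc,\aba$), and a forall-step followed by an implication-step for $\pid$ and $\abd$, with the ``hence'' clause obtained from Propositions~\ref{prop:variants-of-atomization} and~\ref{prop:atomization-vs-cc}. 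Your frame-counting description of the commuting cases is just a compact restatement of the calculations the paper writes out, so there is nothing to add.
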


\begin{proof}
For each rule $R$ of $\ipc$, one does an induction on $M_1\to_R M_2$. In each proof, the inductive cases follow routinely by induction hypothesis, since the various relations $\to_S$ and $\to^+_S$ in $\f$, with $S\in \{\beta, \eta, \ccd, \cca, \eta\delta\}$, are compatible; and the base case corresponds to the reduction rule $R$. The base cases relative to reduction rules pertaining to $\supset$ and $\wedge$ are trivial because $\rpm{(\cdot)}$ maps the constructions pertaining to these connectives in homomorphic fashion, and because $\rpm{([N/x]M)}=[\rpm N/x]\rpm M$. We detail the base cases relative to reduction rules pertaining to $\vee$ and $\perp$.

Case $\betad$: We prove that $\rpcase{\rpinjnsymb_i(N, A_1, A_2)}{x_1^{A_1}}{P_1}{x_2^{A_2}}{P_2}{C}\to^+_{\beta}[N/ x_i] P_i$ in $\f$.
$$
\begin{array}{cll}
&LHS&\\
=&(\Lb X.\lb w^{(A_1\supset X)\wedge(A_2\supset X)}.\proj i w N)C\pair{\lb x_1^{A_1}.P_1}{\lb x_2^{A_2}.P_2}&\\ 
\to_{\betaall}&(\lb w^{(A_1\supset C)\wedge(A_2\supset C)}\proj iw N)\pair{\lb x_1^{A_1}.P_1}{\lb x_2^{A_2}.P_2}&\\
\to_{\betai}&\proj i{\pair{\lb x_1^{A_1}.P_1}{\lb x_2^{A_2}.P_2}}N&\\
\to_{\betac}&(\lb x_i^{A_i}.P_i)N&\\
\to_{\betai}&[N/x_i]P_i
\end{array}
$$
\noindent The first equality is justified by the definitions of $\rpcasesymb$ and $\rpinjnsymb$. To conclude the proof in this case, we need again the commutation of $\rpm{(\_)}$ with substitution.

Case $\etad$: We prove that $\rpcase M{x^A}{\rpinjn 1xAB}{y^B}{\rpinjn 2yAB}{A\dvee B}\to^+_{\delta\eta} M$ in $\f$.
$$
\begin{array}{cll}
&LHS&\\
=&M(A\dvee B)\pair{\lb x^A\Lb X\lb w^{(A\supset X)\wedge (B\supset X)}.\proj 1 w x}{\lb y^B\Lb X\lb w^{(A\supset X)\wedge (B\supset X)}.\proj 2 w y}\\
\to_{\delta}&\Lb X.M(((A\supset X)\wedge (B\supset X))\supset X)\pair{\lb x^A.\lb w.\proj 1 w x}{\lb y^B\lb w.\proj 2 w y}&\\
\to_{\delta}&\Lb X.\lb w.MX\pair{\lb x^A.{\proj 1 w x}}{\lb y^B.{\proj 2 w y}}&\\
\to_{\etai}^2&\Lb X.\lb w.MX\pair{{\proj 1 w}}{{\proj 2 w }}&\\
\to_{\etac}&\Lb X.\lb w.MXw&\\
\to_{\etai}&\Lb X.MX&\\
\to_{\etaall}&M&
\end{array}
$$
\noindent The first equality is justified by the definitions of $\rpcasesymb$ and $\rpinjnsymb$.

Cases $\pii$ and $\pic$ follow immediately from a single application of $\ccd$. Just notice that, in $\f$, $(\rpcase{M}{x^A}{P}{y^B}{Q}{C\supset D})N\to_{\ccd} \rpcase {M}{x^A}{PN}{y^B}{QN}{D}$ and $\proj i {\rpcase{M}{x^A}{P}{y^B}{Q}{C_1\wedge C_2}}\to_{\ccd} \rpcase {M}{x^A}{\proj i P}{y^B}{\proj i Q}{C_i}$.

Case $\pid$: We prove that, in $\f$,
$$
\begin{array}{l}\rpcase{\rpcase M{x_1^{A_1}}{P_1}{x_2^{A_2}}{P_2}{B_1\dvee B_2}}{y_1^{B_1}}{Q_1}{y_2^{B_2}}{Q_2}{C}\to_{\ccd}^+\\
\qquad\rpcase M{x_1^{A_1}}{\rpcase{P_1}{y_1^{B_1}}{Q_1}{y_2^{B_2}}{Q_2}C}{x_2^{A_2}}{\rpcase{P_2}{y_1^{B_1}}{Q_1}{y_2^{B_2}}{Q_2}C}C.
\end{array}$$
$$
\begin{array}{cll}
&LHS&\\
=&(M(B_1\dvee B_2)\pair{\lb x_1^{A_1}.P_1}{\lb x_2^{A_2}.P_2})C\pair {\lb y_1^{B_1}.Q_1}{\lb y_2^{B_2}.Q_2}\\
\to_{\ccd}&M(((B_1\supset C)\wedge (B_2\subset C))\supset C)\pair{\lb x_1^{A_1}.P_1C}{\lb x_2^{A_2}.P_2C}\pair {\lb y_1^{B_1}.Q_1}{\lb y_2^{B_2}.Q_2}&\\
\to_{\ccd}&MC\pair{\lb x_1^{A_1}.P_1C\pair {\lb y_1^{B_1}.Q_1}{\lb y_2^{B_2}.Q_2}}{\lb x_2^{A_2}.P_2C\pair {\lb y_1^{B_1}.Q_1}{\lb y_2^{B_2}.Q_2}}&\\
=&RHS&
\end{array}
$$

\noindent The definition of $\rpcasesymb$ justifies the equalities above.

Case $\pia$: In $\f$, we have that
$$
\aabbort{\rpcase M{x^A}P{y^B}Q{\dperp}}C\to_{\ccd}\rpcase M{x^A}{\aabbort PC}{y^B}{\aabbort QC}C\enspace.
$$
Indeed:
$$
LHS=(M\dperp\pair{\lb x^{A}.P}{\lb y^{B}.Q})C\to_{\ccd}MC\pair{\lb x^{A}.PC}{\lb y^{B}.QC}=RHS\enspace,
$$
where the two equalities are by definition of $\rpcasesymb$ and $\aabbortsymb$.

Cases $\abi$ and $\abc$ follow immediately from a single application of $\cca$. Just notice that $(\aabbort{M}{A\supset B})N\to_{\cca} \aabbort {M}{B}$ and $\proj i {\aabbort{M}{C_1\wedge C_2}}\to_{\cca} \aabbort {M}{C_i}$.

Case $\abd$: We prove that, in $\f$,
$$
\rpcase{\aabbort M{A\dvee B}}{x^A}P{y^B}QC\to_{\cca}^+\aabbort MC\enspace.
$$
$$
\begin{array}{cll}
&LHS&\\
=&M{(A\dvee B)}C\pair{\lb x^A.P}{\lb y^B.Q}&\textrm{(by def. of $\rpcasesymb$ and $\aabbortsymb$)}\\
\to_{\cca}&M((A\supset C)\wedge (B\supset C))\supset C)\pair{\lb x^A.P}{\lb y^B.Q}\\
\to_{\cca}&MC\\
=&RHS&\textrm{(by def. of $\aabbortsymb$)}
\end{array}
$$

Case $\aba$: We prove, in $\f$,
$$
\aabbort{\aabbort M{\dperp}}A\to_{\cca}\aabbort MA\enspace.
$$
Observe that
$$
LHS=M\dperp A\to_{\cca}MA=RHS\enspace,
$$
where the two equalities are by definition of $\aabbortsymb$.
\end{proof}

Now suppose $M_1\to M_2$ in $\ipc$ and $M_1$ is typable in $\Gamma$. By subject reduction, $M_1$ and $M_2$ have the same type ($A$, say) in $\Gamma$, $M_1$ and $M_2$ can be seen as proofs of $A$, and the reduction step $M_1\to M_2$ can be seen as a normalization step between the proofs $M_1$ and $M_2$. Moreover, we know $\rpm{M_1}$ and $\rpm{M_2}$ have type $\rpm A$ in $\rpm\Gamma$, and the strict simulation theorem gives $\rpm {M_1}\to^+_{\beta\eta\atd\ata}\rpm {M_2}$ in $\f$: but does this reduction correspond to a sequence of normalization and atomization steps between successive proofs of $\rpm A$? To answer to this question, we have to investigate the typing of atomization (and $\delta$, and commutative) conversions.

\subsection{Properties of typable atomization}\label{subsec:properties-typed-atomization}

We investigate atomization in connection with typing. Recall for instance the $\atd$-conversion rule with $C=C_1\supset C_2$:
$$
M(C_1\supset C_2)\pair{\lb x^{A}.P}{\lb y^{B}.Q}\to\lb z^{C_1}.M C_2 \pair{\lb x^{A}.Pz}{\lb y^{B}.Qz} \qquad(*)
$$
From the assumption that the redex of $(*)$ has type $D$ in $\Gamma$, say, we \emph{cannot} infer that the contractum has the same type in $\Gamma$. That is, the subject-reduction property fails, if the conversion rule is formulated solely as $(*)$. The preservation of type from redex to contractum is guaranteed if, additionaly, we demand that $M$ has type $A\dvee B$ in $\Gamma$.

Now, the ammended rule is \emph{not} to take $(*)$ together with the requirement that, for \emph{some} $\Gamma$, the redex has a type and $M$ has type $A\dvee B$. Such rule (let alone its compatible closure) would still fail the subject-reduction test, because we could be given another $\Gamma'$ in which the redex had some type, and again, in the case $\Gamma'$ was not $\Gamma$, no guarantee would exist that the contractum had in $\Gamma'$ the same type as the redex.

The ammended rule is to take $(*)$ as defining a ternary relation, consisting of tuples $(N,N',\Gamma)$ where $N$ and $N'$ are, respectively, a redex and its contractum according to $(*)$, and $\Gamma$ is an environment in which $M$ has type $A\dvee B$. But, then, how to define the compatible closure of such ternary relation? We cannot simply close the pairs $(N,N')$ under the term-forming operations, because the $\Gamma$ may vary as we form new pairs - so the closure rules have to deal with $\Gamma$ as well. In the end, we have another set of tuples $(N,N',\Gamma)$, whose intuition is: $N\to N'$ is fine in $\Gamma$. The preservation of type from redex to contractum guaranteed above for the rule $(*)$ will hold now from $N$ to $N'$ - but only if the type is given in a fine $\Gamma$, not an arbitrary $\Gamma'$.

\begin{defn}[Fine Atomization]\label{def:fine-atomization} Let $\Gamma$ be an environment.
\begin{enumerate}
\item A $\atd$-redex $M C \pair{\lb x^A.P}{\lb y^B.Q}$ is \emph{fine in $\Gamma$} if $M$ has type $A \dvee B$ in $\Gamma$.
\item A $\ata$-redex $M C$ is \emph{fine in $\Gamma$} if $M$ has type $\dperp$ in $\Gamma$.
\item A root $\atd\ata$-reduction (an instance of the $\atd\ata$-conversion rules) is \emph{fine in $\Gamma$} if the redex is fine in $\Gamma$.
\item Let $R\in\{\atd,\ata,\atd\ata\}$. The fine root $R$-reductions define a ternary relation, namely
$$\mathcal{R}:=\{(M,M',\Gamma)|\textrm{$M\to M'$ is a root $R$-reduction fine in $\Gamma$}\}\enspace.$$
If $(M,M',\Gamma)\in\mathcal{R}$, we write ``\emph{$M\,R\,M'$ is fine in $\Gamma$}''. We now want to define the ``compatible closure'' of $\mathcal{R}$, that will be written ``\emph{$M\,\to_R\,M'$ is fine in $\Gamma$}'': it is another ternary relation defined inductively by closing $\mathcal{R}$ under the closure rules in Fig.~\ref{fig:fine-closure-rules}.
\item $M$ is a \emph{fine $\atd\ata$-normal form in $\Gamma$} if $M\to_{\atd\ata}M'$ is fine in $\Gamma$ for no $M'$.
\item The closures
\begin{itemize}
\item ``\emph{$M\,\to_R^+\,M'$ is fine in $\Gamma$}'',
\item ``\emph{$M\,\to_R^*\,M'$ is fine in $\Gamma$}'' (fine $R$-reduction in $\Gamma$),
\item ``\emph{$M\,=_R\,M'$ is fine in $\Gamma$}'' (fine $R$-equality in $\Gamma$),
\end{itemize}of ``\emph{$M\,\to_R\,M'$ is fine in $\Gamma$}'' are obtained by closing the latter under the appropriate closure rules from the following list:
\begin{enumerate}
\item \textrm{$M \to M$ is fine in $\Gamma$} (fine reflexivity).
\item If \textrm{$M \to M'$ is fine in $\Gamma$}, then \textrm{$M' \to M$ is fine in $\Gamma$} (fine symmetry).
\item If \textrm{$M \to M'$ is fine in $\Gamma$} and \textrm{$M' \to M''$ is fine in $\Gamma$}, then \textrm{$M \to M''$ is fine in $\Gamma$} (fine transitivity).
\end{enumerate}
\end{enumerate}
\end{defn}
\begin{figure}\caption{Fine compatible closure rules}\label{fig:fine-closure-rules}
\begin{enumerate}
\item[(i)] If $M\to M'$ is fine in $x:A,\Gamma$, then $\lb x^A.M\to\lb x^A.M'$ is fine in $\Gamma$.
\item[(ii)] If $M\to M'$ is fine in $\Gamma$, then $MN\to M'N$ is fine in $\Gamma$.
\item[(iii)] If $N\to N'$ is fine in $\Gamma$, then $MN\to MN'$ is fine in $\Gamma$.
\item[(iv)] If $M\to M'$ is fine in $\Gamma$, then $\pair MN\to \pair{M'}N$ is fine in $\Gamma$.
\item[(v)] If $N\to N'$ is fine in $\Gamma$, then $\pair MN\to \pair M{N'}$ is fine in $\Gamma$.
\item[(vi)] If $M\to M'$ is fine in $\Gamma$, then $\proj iM\to\proj i{M'}$ is fine in $\Gamma$.
\item[(vii)] If $M\to M'$ is fine in $\Gamma$, then $\Lambda X.M\to \Lambda X.{M'}$ is fine in $\Gamma$.
\item[(viii)] If $M\to M'$ is fine in $\Gamma$, then $MB\to{M'}B$ is fine in $\Gamma$.
\end{enumerate}
\end{figure}
If we erase the $\Gamma$'s from the rules in Fig.~\ref{fig:fine-closure-rules}, then we obtain the ordinary closure rules defining the compatible closure. Therefore, if $M\to_R M'$ is fine in $\Gamma$, then $M\to_R M'$. In addition, we know that the $R$-redex contracted in this reduction step is fine in some $\Gamma'$ containing $\Gamma$ (the extra declarations in $\Gamma'$ are those relative to the $\lambda$-abstractions crossed when going from the root of $M$ to the contracted redex).


\begin{prop}[Fine subject reduction] \label{prop:fine-subject-reduction} If $M\to_{\atd\ata}M'$ is fine in $\Gamma$ and $M$ has type $A$ in $\Gamma$, then $M'$ has type $A$ in $\Gamma$.
\end{prop}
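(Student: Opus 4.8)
The plan is to argue by induction on the derivation witnessing that $M\to_{\atd\ata}M'$ is fine in $\Gamma$, following the inductive definition of Definition~\ref{def:fine-atomization}: the base cases are the fine root reductions, and the inductive cases are the eight fine compatible closure rules of Fig.~\ref{fig:fine-closure-rules}. In every case I assume that $M$ has type $A$ in $\Gamma$ and produce a typing of $M'$ with the same type $A$ in the same $\Gamma$, relying throughout on the standard inversion (generation) of the typing rules of $\f$.

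For the base cases I treat the six root conversion rules of Definition~\ref{def:atomization-rules}, and here the fineness hypothesis does the essential work. Consider $\atd$ with $C=C_1\supset C_2$, that is, the redex $M(C_1\supset C_2)\pair{\lb x^{A'}.P}{\lb y^{B'}.Q}$, where I write $A',B'$ for the abstraction domains to avoid clashing with the ambient type $A$. Fineness gives $M:A'\dvee B'$ in $\Gamma$, so $MD$ has type $((A'\supset D)\wedge(B'\supset D))\supset D$ in $\Gamma$ for \emph{every} $D$; this is precisely the instance of the encoded disjunction that allows instantiation at an arbitrary $D$. Inverting the typing of the (well-typed) redex yields $x:A'\vdash P:C_1\supset C_2$ and $y:B'\vdash Q:C_1\supset C_2$ in $\Gamma$, together with redex type $C_1\supset C_2$ (so $A=C_1\supset C_2$ here). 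For the contractum I then type $Pz$ and $Qz$ under the extra hypothesis $z:C_1$, form $\pair{\lb x^{A'}.Pz}{\lb y^{B'}.Qz}:(A'\supset C_2)\wedge(B'\supset C_2)$, apply $MC_2$ to obtain type $C_2$, and abstract $z$ to recover $C_1\supset C_2$. Hence redex and contractum share the type $C_1\supset C_2$. The remaining two $\atd$-rules ($C=C_1\wedge C_2$ and $C=\forall Y.D$) and the three $\ata$-rules (where fineness reads $M:\dperp$, so $MD:D$ for all $D$) follow the same inversion-and-reconstruction pattern; in the $\forall Y.D$ subcases one must discharge the $\forall I$ proviso, which is met after $\alpha$-renaming $Y$ fresh for $\Gamma$, as permitted by the side conditions $Y\notin M,\ldots$.

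The inductive cases are the routine part. For each closure rule of Fig.~\ref{fig:fine-closure-rules} I invert the typing of $M$ to expose the typing of the immediate subterm that is reduced, apply the induction hypothesis to the fine reduction of that subterm, and reassemble a typing of $M'$ with the original type. The only point requiring attention is the context bookkeeping that the \emph{fine} formulation is designed to make precise: rule (i) passes to the environment extended by the bound variable's declaration, exactly matching the inversion of $\supset\! I$, and rule (vii) reintroduces $\Lambda X$ under the $\forall I$ proviso, which is preserved because $X$ does not occur free in $\Gamma$.

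The main obstacle is precisely the base cases, and more specifically the observation that \emph{fineness is exactly the hypothesis needed}. As the discussion preceding this subsection shows, for the bare rule $(*)$ a redex may be typable while its contractum is not, since nothing forces the polymorphic head $M$ to carry the encoded type $A'\dvee B'$ (respectively $\dperp$); once fineness supplies that type, $M$ may be instantiated at every subformula of $C$ occurring in the contractum, and the type is preserved. I therefore expect no difficulty beyond carefully carrying out, in each of the six root cases, the inversion-and-reconstruction computation sketched above.
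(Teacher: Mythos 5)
Your proposal is correct and follows essentially the same route as the paper's own proof: induction on the derivation of the fine reduction, with the base cases resolved by observing that a fine, typable $\atd$- or $\ata$-redex must have exactly the type $C$ of the instantiation and that the contractum recovers this same type (via the instantiations of $M:A'\dvee B'$ or $M:\dperp$ at the subformulas of $C$), and routine inductive cases. Your treatment is simply a more detailed write-up of the paper's terse argument, including the correct handling of the $\forall I$ proviso by $\alpha$-renaming in the $\forall Y.D$ subcases.
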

\begin{proof} By induction on $M\to_{\atd\ata}M'$ fine in $\Gamma$. For the base cases, going through the $\atd$-rules, we check that, if a $\atd$-redex $M C \pair{\lb x^A.P}{\lb y^B.Q}$ is fine and has a type in $\Gamma$, then that type is $C$, and $C$ is also the type of its contractum. Similarly for the $\ata$-rules. The inductive cases are routine.
\end{proof}

\begin{prop}[Fine termination] \label{prop:fine-termination} Fine $\atd\ata$-reduction in $\Gamma$ starting from a typable term in $\Gamma$ is terminating.
\end{prop}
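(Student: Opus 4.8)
The plan is to produce an $\mathbb{N}$-valued measure $\Phi$ on typable terms which strictly decreases under each fine root $\atd$- or $\ata$-reduction and is strictly monotone under the closure rules of Fig.~\ref{fig:fine-closure-rules}; since a strictly descending sequence of naturals is finite, termination follows at once. The naive measure --- the sum, over all fine instantiation positions, of the size of the instantiated formula --- does not work: the conjunction cases of $\atd$ and $\ata$ \emph{duplicate} subterms ($M$, $P$ and $Q$ for $\atd$, and $M$ for $\ata$), copying the fine positions they contain, so such a sum may increase. The whole point is to choose a weight that shrinks fast enough, as the outermost connective of the instantiated formula is peeled off, to absorb this doubling.

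Accordingly, I would first fix a weight $h$ on formulas by $h(X)=1$ for $X$ atomic, $h(A\supset B)=h(A\wedge B)=4(h(A)+h(B))$ and $h(\forall X.A)=4h(A)$; thus $h\geq 1$ everywhere and stripping the outermost connective divides the weight by at least $4$, which is what defeats the factor of $2$ coming from duplication. Then I define $\Phi$ by recursion on the term: $\Phi(x)=1$; every $\lambda$-, $\Lambda$-, projection- and ordinary application/instantiation node adds $1$ to the value(s) of its immediate constituent(s), and a pair $\pair MN$ has value $\Phi(M)+\Phi(N)+1$; finally, at the two \emph{fine} shapes $\Phi$ multiplies by the weight of the instantiated formula, namely a fine $\atd$-pattern $MC\pair{\lb x^A.P}{\lb y^B.Q}$ (where $M:A\dvee B$) receives $h(C)\,(\Phi(M)+\Phi(P)+\Phi(Q)+1)$ and a fine $\ata$-pattern $MC$ (where $M:\dperp$) receives $h(C)\,(\Phi(M)+1)$. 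The heart of the argument is then a one-line computation per root rule. For the conjunction rule of $\atd$, writing $s=\Phi(M)+\Phi(P)+\Phi(Q)\geq 3$, the redex has value $4(h(C_1)+h(C_2))(s+1)$ and the contractum $\ipair i{MC_i\pair{\lb x^A.Pi}{\lb y^B.Qi}}$ has value $(h(C_1)+h(C_2))(s+3)+1$, whose difference $3(h(C_1)+h(C_2))s+(h(C_1)+h(C_2))-1$ is strictly positive; the conjunction rule of $\ata$ is the same computation with $P,Q$ absent, and the four non-conjunction rules duplicate nothing and decrease even more easily.

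For the compatible closure I would observe that $\Phi$ is strictly increasing in the value of each immediate constituent, so it is enough to know that the classification of every node as fine-$\atd$, fine-$\ata$, or ordinary is \emph{invariant} under fine reductions occurring strictly inside it; this is exactly where Proposition~\ref{prop:fine-subject-reduction} enters. A fine reduction inside a subterm preserves that subterm's type, and hence the conditions $M:A\dvee B$ and $M:\dperp$ on which the classification rests; furthermore the head $MC$ of a fine $\atd$-node admits no root reduction of its own, since an $\ata$-step there would require $M:\dperp$, incompatible with $M:A\dvee B$, so its instantiation shape is never destroyed from below. Given this invariance, a routine induction over Fig.~\ref{fig:fine-closure-rules} lifts the strict decrease from redex to context, and the measure terminates the reduction. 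The one real obstacle is the duplication in the two conjunction rules; once it is neutralised by the geometrically growing weight $h$, all that remains is the bookkeeping keeping the node-classification stable along the reduction, for which fine subject reduction is precisely the tool.
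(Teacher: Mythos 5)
Your overall strategy---a formula weight that grows fast enough for the peeling of a connective to absorb the duplication caused by the conjunction rules, together with a term measure that becomes multiplicative exactly at the fine positions---is the same kind of argument as the paper's, and your root-case computations are correct. The fatal problem is the claim on which your compatibility step rests: that the classification of a node as fine-$\atd$, fine-$\ata$ or ordinary is invariant under fine reductions strictly inside it. You verified that the \emph{type} conditions ($M:A\dvee B$, $M:\dperp$) are stable, but your fine $\atd$-pattern also imposes a \emph{syntactic} condition---the argument must be a literal pair of $\lb$-abstractions---and that condition can be \emph{created} by an internal step, because the implication (and $\forall$) rules of $\atd$ produce abstractions. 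Concretely, take $\Gamma$ containing $u:A'\dvee B'$, $v:A''\dvee B''$, $p:A''\supset(A'\supset C)$, $q:B''\supset(A'\supset C)$, $r:B'\supset C$, with $C$ non-atomic, and consider
$$N \;=\; u\,C\,\pair{T_1}{\lb w^{B'}.rw},\qquad T_1 \;=\; v\,(A'\supset C)\,\pair{\lb x^{A''}.px}{\lb y^{B''}.qy}.$$
Before the step, $N$ is an \emph{ordinary} application node (the first component $T_1$ of the pair is an application, not an abstraction), so $\Phi(N)$ is an additive combination of $\Phi(T_1)$ and small overhead. Now contract $T_1$ by the implication rule of $\atd$, which is fine in $\Gamma$: $T_1\to\lb z^{A'}.v\,C\,\pair{\lb x^{A''}.pxz}{\lb y^{B''}.qyz}$. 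The contractum \emph{is} an abstraction, so after the step the outer node matches your pattern $u\,C\,\pair{\lb z^{A'}.(\ldots)}{\lb w^{B'}.rw}$ and gets reclassified as fine-$\atd$: its value now carries the multiplicative factor $h(C)\geq 4$ applied to the still-large value of the new first body. With $A',B',A'',B''$ atomic, $C=Y\supset Y$, and the natural reading of your additive clauses (application and pair take the sum of the constituents plus one), one computes $\Phi(N)=296$ before the step and $808$ after it. So $\Phi$ strictly \emph{increases} along a fine $\atd$-reduction step, and your measure does not prove termination.

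This is not a local repair. If you loosen the pattern so that classification becomes stable---weighting every $M\,C\,Q$ with $M:A\dvee B$ and $C$ non-atomic, for \emph{arbitrary} $Q$, which is exactly the paper's notion of \emph{pre-redex}---then the above phenomenon disappears, but the difficulty reappears at the root: in the contractum $\lb z^{C_1}.MC_2\pair{\lb x^{A}.Pz}{\lb y^{B}.Qz}$ of the implication rule, the terms $Pz$ and $Qz$ can be \emph{new} weighted positions (namely when $P=P'C_2$ and $Q=Q'C_2$ with $P',Q'$ of type $A'\dvee B'$ and $C_1=(A'\supset C_2)\wedge(B'\supset C_2)$), and each then contributes a factor $h(C_2)$, producing an $h(C_2)^2$ term in the contractum's value. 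Your weight $h(C_1\supset C_2)=4(h(C_1)+h(C_2))$ is only \emph{linear} in $h(C_2)$ (note that $h(C_1)$ above is itself linear in $h(C_2)$), so it cannot dominate that square. This is precisely why the paper's weight on implications is quadratic in the codomain, $|A\supset B|=2|B|^2+3|B|+1$, and why its measure $W$ sums over pre-redex occurrences rather than over literal redex shapes: the two design choices---a classification stable under internal reduction, and a codomain-quadratic weight to pay for the redexes that the implication rule creates---have to be made together, and your proposal makes neither.
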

\begin{proof} First, given a type $C$, its size $|C|$ is defined by: $|X|=0$; $|A\supset B|=2|B|^2+3|B|+1$; $|A\wedge B|=1+|A|+|B|$; $|\forall X.A|=1+|A|$.

Next, we define a \emph{pre-redex} to be a term of the forms $MCQ$ or $MC$, where $C$ is not atomic. In the first case, the pre-redex is a generalization of a $\atd$-redex; and in the second case the pre-redex is exactly a $\ata$-redex. Pre-redexes are ranged over by $r$. We say a pre-redex $MCQ$ is \emph{fine in} $\Gamma$ if, for some types $A$ and $B$, $M$ has type $A\dvee B$ in $\Gamma$; and say the pre-redex $MC$ is \emph{fine in} $\Gamma$ if $M$ has type $\dperp$ in $\Gamma$. If the pre-redex $MCQ$ is fine in $\Gamma$, $MC$ is \emph{not} another pre-redex fine in $\Gamma$ (because the type of $M$ in $\Gamma$ is not $\dperp$).

Given $M$ typable in $\Gamma$, with type $A$, say, the unique typing derivation of $\Gamma\vdash M:A$ shows a given occurrence of a subterm $N$ of $M$ typable in some $\Gamma'$: such an occurrence is a \emph{pre-redex occurrence in $M$ (according to $\Gamma$)} if $N$ is a pre-redex fine in $\Gamma'$. Each pre-redex occurrence in $M$ is the occurrence of some pre-redex $r$ and has an associated environment $\Gamma'$. Different pre-redex occurrences in $M$ will be denoted $r_1$, $r_2$, etc. When we write $r_i$, we mean an occurrence of pre-redex $r$, and the related environment is denoted $\Gamma^{r_i}$. Let $\mathcal{R}$ be the set of pre-redex occurrences in $M$. Define
$$
W(M;\Gamma):=\sum_{r_i\in\mathcal{R}}w(r;\Gamma^{r_i})\enspace,
$$
where $w(r;\Gamma')$ is defined as follows:
\begin{itemize}
\item if $r=PCQ$, then $w(r; \Gamma')=|C|(1+W(P;\Gamma')+W(Q;\Gamma'))$;
\item if $r=PC$, then $w(r;\Gamma')=|C|(1+W(P;\Gamma'))$.
\end{itemize}

Now suppose $M$ is itself a pre-redex $r$ fine in $\Gamma$. Let us calculate $W(r;\Gamma)$:
\begin{itemize}
\item if $r=PCQ$, then $W(r;\Gamma)=w(r;\Gamma)+W(P;\Gamma)+W(Q;\Gamma)$, hence
$$W(r;\Gamma)=(|C|+1)(W(P;\Gamma)+W(Q;\Gamma))+|C|\qquad(*)$$
\item if $r=PC$, then $W(r;\Gamma)=w(r;\Gamma)+W(P;\Gamma)$, hence
$$W(r;\Gamma)=(|C|+1)(W(P;\Gamma))+|C|\qquad(**)$$
\end{itemize}

Notice how, in these calculations, we do not make use of the concrete definition of $|C|$. The same is true of the recursive definition of $W$ we give next, which relies on the previous calculations.

The easy cases read:
$$
\begin{array}{rcl}
W(x;\Gamma)&=&0\\
W(\lb x^A.M;\Gamma)&=&W(M;\Gamma,x:A)\\
W(\pair{M_1}{M_2};\Gamma)&=&W(M_1;\Gamma)+W(M_2;\Gamma)\\
W(\proj iM;\Gamma)&=&W(M;\Gamma)\\
W(\Lambda X.M;\Gamma)&=&W(M;\Gamma)
\end{array}
$$

As to $W(MC;\Gamma)$: if $MC$ is not a pre-redex fine in $\Gamma$, then $W(MC;\Gamma)=W(M;\Gamma)$; otherwise, $W(MC;\Gamma)=(|C|+1)W(M;\Gamma)+|C|$, due to $(**)$ above. Notice that, when $C$ is atomic (hence $MC$ is not a pre-redex and $|C|=0$), $(|C|+1)W(M;\Gamma)+|C|=W(M;\Gamma)$, so it does not matter which branch of the definition we use to calculate.

As to $W(MN;\Gamma)$: if $MN$ is not a pre-redex fine in $\Gamma$, then $W(MN;\Gamma)=W(M;\Gamma)+W(N;\Gamma)$; otherwise $MN=PCN$, $M=PC$ is not a pre-redex fine in $\Gamma$ (because the type of $P$ in $\Gamma$ is not $\dperp$), hence $W(M;\Gamma)=W(P;\Gamma)$, and therefore $W(MN;\Gamma)=(|C|+1)(W(M;\Gamma)+W(N;\Gamma))+|C|$, due to $(*)$ above. Notice that, when $M=PC$ with $C$ atomic (hence $MN=PCN$ is not a pre-redex and $|C|=0$), it does not matter again which branch of the definition we use to calculate, because $(|C|+1)(W(M;\Gamma)+W(N;\Gamma))+|C|=W(M;\Gamma)+W(N;\Gamma)$.

All is in place to prove:
$$
\textrm{If $M\to_{\atd\ata}N$ is fine in $\Gamma$, then $W(M;\Gamma)>W(N;\Gamma)$.}\qquad(***)
$$
Termination of fine $\atd\ata$-reduction in $\Gamma$ starting from $M$ follows from $(***)$ and the fact that $N$ is also typable in $\Gamma$ (due to fine subject reduction) .

The proof of $(***)$ is by induction on $M\to_{\atd\ata}N$ is in $\Gamma$, and we will make use of the recursive definition of $W$.

For the base cases, we have to check each conversion rule in Def.~\ref{def:atomization-rules}. The rules when $C=C_1\wedge C_2$ are challenging, because they cause duplication of terms. But, since measure $W$ is weighting pre-redexes, the $\atd$-rule when $C=C_1\supset C_2$ is challenging as well, since it generates new pre-redexes in some cases. The concrete definition of $|C|$ is important in proving the base cases.

Case $LHS:=M(C_1\supset C_2)\pair{\lb x^A.P}{\lb y^B.Q}\to_{\atd} \lb z^{C_1}MC_2\pair{\lb x^{A}.Pz}{\lb y^{B}.Qz}=:RHS$. Since $z\notin FV(M)$, $W(M;\Gamma,z:C_1)=W(M;\Gamma)$, so we just write $W(M)$. Similarly for $P$ and $Q$. The most favorable case, the case when $W(RHS)$ is smaller, is when neither $Pz$ nor $Qz$ is a new pre-redex. Then, $W(Pz)=W(P)$ and $W(Qz)=W(Q)$, and we calculate:
$$
\begin{array}{cl}
&W(LHS)\\
=&(|C_1\supset C_2|+1)(W(M)+W(P)+W(Q))+|C_1\supset C_2|\\
\geq&(|C_2|+1)(W(M)+W(P)+W(Q))+|C_1\supset C_2|\\
>&(|C_2|+1)(W(M)+W(P)+W(Q))+|C_2|\\
=&(|C_2|+1)(W(M)+W(Pz)+W(Qz))+|C_2|\\
=&W(RHS)
\end{array}
$$
We jump immediately to the less favorable case, when both $Pz$ and $Qz$ are pre-redexes. This means that $P=P'C_2$, $Q=Q'C_2$, $P$ and $Q$ have type $A'\dvee B'$, and $C_1=(A'\supset C_2)\wedge(B'\supset C_2)$, for some $P'$, $Q'$, $A'$ and $B'$. In this case, $W(P)=W(P')$, $W(Q)=W(Q')$, $W(Pz)=(|C_2|+1)W(P')+|C_2|$ and $W(Qz)=(|C_2|+1)W(Q')+|C_2|$. Then
$$
\begin{array}{cl}
&W(RHS)\\
=&(|C_2|+1)(W(M)+W(Pz)+W(Qz))+|C_2|\\
=&(|C_2|+1)W(M)+(|C_2|+1)^2W(P')+(|C_2|+1)^2W(Q')+2|C_2|^2+3|C_2|\\
\leq&(|C_2|+1)^2(W(M)+W(P')+W(Q'))+2|C_2|^2+3|C_2|\\
\leq&(|C_1\supset C_2|+1)(W(M)+W(P')+W(Q'))+2|C_2|^2+3|C_2|\\
=&(|C_1\supset C_2|+1)(W(M)+W(P)+W(Q))+2|C_2|^2+3|C_2|\\
<&(|C_1\supset C_2|+1)(W(M)+W(P)+W(Q))+2|C_2|^2+3|C_2|+1\\
=&(|C_1\supset C_2|+1)(W(M)+W(P)+W(Q))+|C_1\supset C_2|\\
=&W(LHS)
\end{array}
$$

Case $LHS:=M(C_1\wedge C_2)\pair{\lb x^A.P}{\lb y^B.Q}\to_{\atd}\ipair i{ MC_i\pair{\lb x^{A}.Pi}{\lb y^{B}.Qi}}=:RHS$. We calculate, omitting the fixed argument $\Gamma$ of $W$:
$$
\begin{array}{cl}
&W(LHS)\\
=&(|C_1|+|C_2|+2)(W(M)+W(P)+W(Q))+|C_1|+|C_2|+1\\
=&(|C_1|+1)(W(M)+W(P)+W(Q))+|C_1|+\\
 & + (|C_2|+1)(W(M)+W(P)+W(Q))+|C_2|+1\\
=&W(MC_1\pair{\lb x^A.\proj 1P}{\lb y^B.\proj 1Q}) + W(MC_2\pair{\lb x^A.\proj 2P}{\lb y^B.\proj 2Q}) +1\\
=&W(RHS)+1
\end{array}
$$
Case $LHS:=M(C_1\wedge C_2)\to_{\ata}\ipair i{ MC_i}=:RHS$. We calculate:
$$
\begin{array}{cl}
&W(LHS)\\
=&(|C_1|+|C_2|+2)W(M)+|C_1|+|C_2|+1\\
=&(|C_1|+1)W(M)+|C_1|+(|C_2|+1)W(M)+|C_2|+1\\
=&W(MC_1) + W(MC_2) +1\\
=&W(RHS)+1
\end{array}
$$

Case $M=\lb x^A.M'\to_{\atd\ata}\lb x^A.N'=N$, with $M'\to_{\atd\ata}N'$ fine in $\Gamma,x:A$. By IH, $W(M';\Gamma,x:A)>W(N';\Gamma,x:A)$. Hence $W(M;\Gamma)>W(N;\Gamma)$, because $W(M;\Gamma)=W(M';\Gamma,x:A)$ and $W(N;\Gamma)=W(N';\Gamma,x:A)$.

Equally easy are: the case $M=\Lambda X.M'\to_{\atd\ata}\Lambda X.N'=N$, with $M'\to_{\atd\ata}N'$ fine in $\Gamma$; the case $M=\pair{M_1}{M_2}\to_{\atd\ata}\pair{N_1}{M_2}=N$, with $M_1\to_{\atd\ata}N_1$ fine in $\Gamma$; the case $M=\pair{M_1}{M_2}\to_{\atd\ata}\pair{M_1}{N_2}=N$, with $M_2\to_{\atd\ata}N_2$ fine in $\Gamma$; and the case $M=\proj i{M'}\to_{\atd\ata}\proj i {N'}=N$, with $M'\to_{\atd\ata}N'$ fine in $\Gamma$. They all follow by IH and application of the recursive definition of $W$.

Case $M=M_1M_2\to_{\atd\ata}N_1M_2=N$ with $M_1\to_{\atd\ata}N_1$ fine in $\Gamma$. By IH, $W(M_1;\Gamma)>W(N_1;\Gamma)$. First, we prove this important remark: $M$ is a pre-redex fine in $\Gamma$ iff $N$ is a pre-redex fine in $\Gamma$. The ``only if'' implication is easy to see. The ``if'' implication is slightly more involved: if $N$ is a pre-redex, then $N_1=N_1'C$, for some $N_1',C$, and the circumstance $M_1\to_{\atd\ata}N_1$ implies that $M_1=M_1'C$ and $M_1'\to_{\atd\ata}N_1'$, since no root $\atd\ata$-reduction can produce the instantiation $N_1'C$. Both implications use fine subject reduction. Now that the remark is proved, we continue:
\begin{itemize}
\item First sub-case: $M$ is a pre-redex. Then $N$ is a pre-redex, $W(M;\Gamma)=(|C|+1)(W(M_1;\Gamma)+W(M_2;\Gamma))+|C|$, and $W(N;\Gamma)=(|C|+1)(W(N_1;\Gamma)+W(M_2;\Gamma))+|C|$. So $W(M;\Gamma)>W(N;\Gamma)$ follows from $W(M_1;\Gamma)>W(N_1;\Gamma)$.
\item Second sub-case: $M$ is not a pre-redex. Then $N$ is not a pre-redex, $W(M;\Gamma)=W(M_1;\Gamma)+W(M_2;\Gamma)$, and $W(N;\Gamma)=W(N_1;\Gamma)+W(M_2;\Gamma)$. So $W(M;\Gamma)>W(N;\Gamma)$ follows again from $W(M_1;\Gamma)>W(N_1;\Gamma)$.
\end{itemize}

The remaining two cases are proved similarly, each with the help of an ``important remark'', saying that $M$ is a pre-redex fine in $\Gamma$ iff $N$ is a pre-redex fine in $\Gamma$, and very easy to prove, using fine subject reduction. There is the case of $M=M_1M_2\to_{\atd\ata}M_1N_2=N$ with $M_2\to_{\atd\ata}N_2$ fine in $\Gamma$: its proof finished off very similarly to the previous case, by IH, the important remark and the recursive definition of $W$. The last case is $M=M'C\to_{\atd\ata}N'C=N$ with $M'\to_{\atd\ata}N'$ fine in $\Gamma$: its proof is finished off in the same way.
\end{proof}

\begin{prop}[Atomic normal form] \label{prop:atomic-normal-form} If $M$ is typable in $\Gamma$, then $M$ has a unique fine $\atd\ata$-normal form in $\Gamma$ (which we call the \emph{atomic normal form} of $M$ in $\Gamma$).
\end{prop}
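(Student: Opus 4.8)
The plan is to derive the statement from two facts about fine $\atd\ata$-reduction: termination, already established in Proposition~\ref{prop:fine-termination}, and confluence, which I will obtain from local confluence through Newman's Lemma. Concretely, given $M$ typable in $\Gamma$, existence of a fine $\atd\ata$-normal form in $\Gamma$ is immediate: by Proposition~\ref{prop:fine-termination} every maximal fine reduction sequence issuing from $M$ is finite, and its last term is by definition a fine normal form. For uniqueness it suffices to prove that fine $\atd\ata$-reduction in $\Gamma$ is confluent, for then any two fine normal forms $N_1,N_2$ of $M$ have a common fine reduct, which, being reached from normal forms, must equal both. Since fine reduction terminates, Newman's Lemma reduces confluence to \emph{local} confluence: whenever $M\to_{\atd\ata}N_1$ and $M\to_{\atd\ata}N_2$ are both fine in $\Gamma$, the terms $N_1$ and $N_2$ must have a common fine reduct in $\Gamma$.

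So the real work is the local confluence analysis, which I would organise by the relative positions of the two contracted redexes. When the two redexes occupy disjoint positions they commute trivially. The crucial observation is that \emph{there are no critical pairs}: the left-hand side of each rule in Definition~\ref{def:atomization-rules} is an application $MC\pair{\cdots}{\cdots}$ or $MC$ whose pattern constrains only the applicative skeleton, the non-atomic type $C$, and the shape of the pair argument; the head $M$ and the branches $P,Q$ occur as plain parameters, and the type argument $C$ is not subject to $\atd\ata$-reduction. Consequently any redex lying inside another must sit inside one of these parameter positions ($M$, $P$, or $Q$), never overlapping the outer pattern. The one genuinely dangerous coincidence -- an $\atd$-redex $MC\pair{\lb x^A.P}{\lb y^B.Q}$ whose subterm $MC$ is simultaneously an $\ata$-redex -- is \emph{excluded by fineness}: fineness of the $\atd$-redex forces $M$ to have type $A\dvee B$ in the ambient environment, whereas fineness of $MC$ as an $\ata$-redex would force $M$ to have type $\dperp$; since our terms are fully annotated and hence uniquely typed, $M$ cannot carry both types. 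This is exactly the incompatibility already exploited in the proof of Proposition~\ref{prop:fine-termination}.

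It then remains to close the diagram in the nested case, where one redex lies in a parameter position of the other. Here reducing the inner redex and reducing the outer one commute, the only care being that the outer $\atd$-rule may \emph{duplicate} the parameter carrying the inner redex -- always in the conjunction case, where $M$ (and each branch) is copied, and in the other cases for the head $M$. Thus the diagram is closed not by single steps but by $\to^*_{\atd\ata}$: on the side where the outer redex fired first one performs one inner step per copy. Fine subject reduction (Proposition~\ref{prop:fine-subject-reduction}) guarantees that this all stays within fine reduction -- reducing an inner redex does not change the type of the head of the outer one, so the outer redex remains fine, and conversely each duplicated copy of the inner redex inherits the head type it had before, hence remains a fine redex in its (possibly $z$- or $Y$-extended) environment. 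This yields local confluence; combined with termination via Newman's Lemma it gives confluence, and uniqueness of the fine normal form follows.

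I would expect the main obstacle to be precisely the bookkeeping around fineness in the nested and duplicating case: one must check, rule by rule, that the copies produced by an $\atd$-step remain fine redexes and that the fresh binders $\lb z$ and $\Lb Y$ introduced on one branch of the diagram do not obstruct the matching reductions on the other branch. All of these checks are, however, routine consequences of fine subject reduction and the freshness conditions built into Definition~\ref{def:atomization-rules}; no new critical-pair computation is needed, which is what makes the argument go through cleanly.
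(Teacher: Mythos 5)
Your proposal is correct and follows essentially the same route as the paper's proof: existence from fine termination (Proposition~\ref{prop:fine-termination}), uniqueness via Newman's Lemma applied to local confluence, where the only possible critical overlap---a subterm $MC$ being a fine $\ata$-redex while serving as the function part of a fine $\atd$-redex $MC\pair{\lb x^A.P}{\lb y^B.Q}$---is excluded because $M$ would then have to carry both the type $\dperp$ and the type $A\dvee B$ in the same environment. Your extra bookkeeping about duplication and preservation of fineness in the nested case simply spells out what the paper compresses into the remark that the non-overlapping redexes are ``easy to joint'' in a common reduct.
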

\begin{proof} Fine termination guarantees the existence of normal forms. As to uniqueness, we need to prove that fine $\atd\ata$-reduction in $\Gamma$ is confluent. By Newman's Lemma \cite{TS96}, it suffices to show local confluence. Suppose that $M\to_{\atd\ata}N_1$ and $M\to_{\atd\ata}N_2$ are fine in $\Gamma$. The two redexes contracted in these two steps can only overlap trivially (otherwise, in some $\Gamma'$, some $M'C$ would be a fine $\ata$-redex and, at the same time, part of a fine $\atd$-redex $M'CQ$, and hence $M'$ would have two different types in $\Gamma'$); but then it is easy to joint $N_1$ and $N_2$ in a common $\atd\ata$-reduct.
\end{proof}

This concludes the investigation of typable atomization. We turn briefly to $\delta$ and commutative conversions. The problem with subject reduction observed in the beginning of this subsection with $\atd$ is observed again with the $\delta$-rules in Def.~\ref{def:delta} and the $\ccd$-rules (resp. $\cca$-rules) in Def.~\ref{def:epsilon}. If we are given $\Gamma$ where the redex is typable, the corresponding contractum is typable only if, additionally, $M$ has type $A\dvee B$ (resp. type $\dperp$) in $\Gamma$. This determines what it means to be a $\delta$-, $\ccd$- or $\cca$-redex fine in $\Gamma$, and therefore what it means, for a root $\delta$-, $\ccd$- or $\cca$-reduction, to be fine in $\Gamma$.

A root $\beta\eta$-reduction is always fine in $\Gamma$. Hence, given $R$ any combination of $\beta,\eta,\delta,\ccd,\cca,\atd,\ata$, we know what it means to be a root $R$-reduction fine in $\Gamma$. Following the path in Def.~\ref{def:fine-atomization}, we define the versions of $\to_R$, $\to_R^+$, $\to_R^*$ and $=_R$ fine in $\Gamma$.

Now we have the tools to answer the question at the end of Subsection \ref{subsec:strict-simulation}, by providing an addendum to the strict simulation theorem.\\

\textbf{Theorem \ref{thm:strict-simulation} (full version):} \emph{If $M_1\to M_2$ in $\ipc$, then $\rpm {M_1}\to^+_{\beta\eta\ccd\cca\delta}\rpm {M_2}$ and $\rpm {M_1}\to^+_{\beta\eta\atd\ata}\rpm {M_2}$ in $\f$; moreover, these reductions are fine in $\rpm{\Gamma}$, if $M_1$ is typable in $\Gamma$.}

\begin{proof}
We have to go through all the cases in the proof of Theorem \ref{thm:strict-simulation} and check the extra statement. Let us start with the base cases. Again, reduction rules that just generate $\beta\eta$-steps in the target are no trouble, since $\beta\eta$-reduction is always fine in any environment. So we have to check rule $\etad$, the various $\pi$-rules, and the various $\ab$-rules. From the assumption that the redex is typable in $\Gamma$, a term $M$ has type $A\vee B$ or $\perp$ as expected, and so $\rpm M$ has type $\rpm{A}\dvee\rpm{B}$ or $\dperp$ as required to make all the $\delta$, $\ccd$ or $\cca$-steps in the shown simulation be fine in $\rpm\Gamma$. As to the inductive cases, they become routine, as soon as we realize that, for $S\in \{\beta, \eta, \ccd, \cca, \eta\delta\}$, the relations ``$N\to_S N'$ is fine in $\Delta$'' and ``$N\to^+_S N'$ is fine in $\Delta$'' not only enjoy the fine closure rules of Fig.~\ref{fig:fine-closure-rules}, but also another set of rules, the derivable rules that are the fine versions of those in Fig.~\ref{fig:compatibility-rules-F}, and which we refrain to write down.
\end{proof}

In a similar way we can go again through the proof of Prop.~\ref{prop:atomization-vs-cc}, to conclude, in particular: if $M\to_{\atd\ata}N$ is fine in $\Gamma$, then $M=_{\ccd\cca\eta}N$ is fine in $\Gamma$. We note a further consequence, to be used in the next subsection:
\begin{equation}\label{eq:fine-inclusion}
\textrm{If $M=_{\atd\ata}N$ is fine in $\Gamma$, then $M=_{\ccd\cca\eta}N$ is fine in $\Gamma$.}
\end{equation}

We finish this subsection with a final comments on $\ccd$ and $\cca$. Consider again rule $\ccd'$
$$
\E\hole{\rpcase M{x^{A}}P{y^{B}}QC}\to\rpcase M{x^{A}}{\E\hole{P}}{y^{B}}{\E\hole{Q}}D\enspace, 
$$
and say this root reduction is \emph{fine in $\Gamma$} if $M$ has type $A\dvee B$ in $\Gamma$ and, furthermore, $\E$ has type $D$ and hole of type $C$ in $\Gamma$. Then, the connections between $C$ and $D$, spelled out in (\ref{eq:constraints-def-ccd}), are guaranteed by the typing rules for elimination contexts. So, such root reduction, when fine in $\Gamma$, is a root $\ccd$-reduction fine in $\Gamma$. The inverse is not necessarily true: in the case of $\E=\ehole N$, the typing rules give the bonus of $N$ typable in $\Gamma$, which is not necessarily the case in a fine root $\ccd$-reduction. The perfect match is obtained when the redex is typable: if $M$ is typable in $\Gamma$, then $M\to N$ is a fine root $\ccd$-reduction in $\Gamma$ iff $M\to N$ is a fine root $\ccd'$-reduction in $\Gamma$. Expressing $\ccd$ as the single rule $\ccd'$ will be used in the next subsection.

Similarly for root $\cca$-reduction and the $\cca'$-rule $\E\hole{\rpabort MC}\to\rpabort MD$.



\subsection{Connection with dinaturality conversions}

In this subsection we prove that adding fine atomization conversions to system $\f$ does not cause the system to become inconsistent. 
In view of (\ref{eq:fine-inclusion}), it suffices to show the same for the addition of commutative conversions $\ccd$ and $\cca$. The strategy of the proof is the one that is implicit in \cite{TranchiniPistonePetrolo2019} in the informal justification that some general form of commutative conversions does not break consistency: one shows that the equality generated by adding the commutative conversions is contained in a bigger equality which is known to be consistent. Here, for the latter, we take the equality obtained by adding to system $\f$ a \emph{dinaturality conversion}, which we denote $\nu$.

The full formalization of $\nu$ would require bringing here the machinery of \cite{LatailladeLICS2009}. We refrain from doing that and just give a brief indication. Given formulas $A, C^+, C^-$ and a type variable $X$, the formula that results from substituting $C^+$ (resp. $C^-$) for the positive (resp. negative) occurrences of $X$ in $A$ is denoted $[(C^-,C^+)/X]A$. Notice $[(C,C)/X]A=[C/X]A$. Let $\mathcal{F}$ be the free category generated from system $\f$ (the ``syntactic'' category) by the usual method of categorical logic. Then, each $A$ determines a functor $[(\_,\_)/X]A:\mathcal{F}^{op}\times\mathcal{F}\to\mathcal{F}$. We refer the reader to \cite{LatailladeLICS2009} for the definition $[(f^-,f^+)/X]A$, for morphism $f^-, f^+$.

Let $x:A_1\vdash t:A_2$. The family of morphisms $[C/X]t:[(C,C)/X]A_1\to[(C,C)/X]A_2$ over $C$ is a \emph{dinatural transformation} between the functors $[(\_,\_)/X]A_1$ and $[(\_,\_)/X]A_2$ if, for every morphism $u:C\to D$, a certain diagram commutes, which means that certain two terms, determined by the given data, namely
$$
[(u,1_C)/X]A_1;[C/X]t;[(1_C,u)/X]A_2
$$
and
$$
[(1_D,u)/X]A_1;[D/X]t;[(u,1_D)/X]A_2\enspace,
$$
are $\beta\eta$-equal in system $\f$. In \cite{LatailladeLICS2009} a characterization is given of the terms $t$ which determine dinatural transformations. Conversion $\nu$ states that the referred diagram always commutes, that is, the above two terms are always equal. Hence, in system $\f$ plus $\nu$, every typable term determines a dinatural transformation.

We now show that $\ccd'\subseteq=_{\beta\eta\nu}$ (more precisely, a root $\ccd'$-reduction step fine in $\Gamma$ is contained in $=_{\beta\eta\nu}$). Let $M$ be such that $\Gamma\vdash M:A\dvee B$. Let $X$ be a type variable not free in $M$, and $P,P'$ be of type $C$, and $\Gamma|C\vdash \E:D$. For the purpose of matching the following calculation with the notation in the previous paragraphs, it is useful to put $t:=MX\pair{x}{x'}$. Then $\Gamma,x:A\supset X,x':B\supset X\vdash t:X$, so we may put $A_1:=(A\supset X)\wedge(B\supset X)$ and $A_2:=X$. Then:
$$
\begin{array}{rcl}
\E\hole{MC\pair{\lb z^A.P}{\lb {z'}^B.P'}}&=_{\eta}&\E\hole{MC\pair{\lb y.(\lb z^A.P)y}{\lb y'.(\lb {z'}^B.P')y'}}\\
&=&[\lb z.P/x][\lb z'.P'/x'](\E\hole{MC\pair{\lb y.xy}{\lb y'.x'y'}})\\
&=_{\nu}&[\lb z.P/x][\lb z'.P'/x'](MC\pair{\lb y.\E\hole{xy}}{\lb y'.\E\hole{x'y'}})\\
&=&MC\pair{\lb y.\E\hole{(\lb z.P)y}}{\lb y'.\E\hole{(\lb z'.P')y'}}\\
&=_{\beta}&MC\pair{\lb y.\E\hole{[y/z]P}}{\lb y'.\E\hole{[y'/z']P'}}\\
&=&MC\pair{\lb z.\E\hole{P}}{\lb z'.\E\hole{P'}}
\end{array}
$$

\noindent The $\nu$-conversion in this calculation is justified as follows. Before the conversion, we find the term $MC\pair{\lb y.xy}{\lb y'.x'y'}$, which is $[N/x][N'/x'][C/X]t$, where $N=\lb y.xy$ is $[(\E,1_C)/X](A\supset X)$ and $N'=\lb y'.x'y'$ is $[(\E,1_C)/X](B\supset X)$. Since $A_2=X$, $[(1_C,\E)/X]A_2=\E$. After the conversion, notice that $\lb y.\E\hole{xy}$ is $[(1_D,\E)/X](A\supset X)$ and $\lb y'.\E\hole{x'y'}$ is $[(1_D,\E)/X](B\supset X)$. Since $A_2=X$, $[(u,1_D)/X]A_2=1_D$.

As to $\cca$, it turns out that $\cca'\subseteq=_{\nu}$. Let $M$ be such that $\Gamma\vdash M:\dperp$. Let $X$ be a type variable not free in $M$, and $C\vdash \E:D$. For the purpose of recognizing a $\nu$-conversion, it is useful to put $t:=MX$ and allow a type $1$. Then $\Gamma,x:1\vdash t:X$, so we may put $A_1:=1$ and $A_2:=X$. From type $1$ we just require that $X$ does not occur in $1$, and that $[(f^-,f^+)/X]1$ is the identity $1_1$, that is $1$ as a trivial derivation. Then:
$$
\E\hole{MC}=_{\nu}MD\enspace.
$$
Indeed, $MC=[C/X]t$ and pre-composition with $[(\E,id_C)/]1$, since the latter is an identity; and $[(1_C,\E)/X]A_2=\E$, since $A_2=X$. After the conversion, $MD=[D/X]t$, and the pre-composition with $[(id_D,\E)/X]1$ has no effect since the latter is an identity; derivation $\E$ vanishes since $[(1_D,\E)/X]1$ is a trivial derivation; and $[(u,1_D)/X]A_2=1_D$, since $A_2=X$.

\begin{thm}[Consistency]\label{thm:consistency}
In system $\f$ there are terms $M,N$ typable in $\Gamma$ such that $M=_{\beta\eta\atd\ata}N$ is not fine in $\Gamma$.
\end{thm}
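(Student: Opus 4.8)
The plan is to show that the fine $\beta\eta\atd\ata$-equality in $\Gamma$ is contained, on typable terms, in the dinaturality equality $=_{\beta\eta\nu}$, and then to separate two concrete typable terms by means of a model of system $\f$ plus $\nu$. The statement --- existence of a pair of typable terms that are \emph{not} fine $\beta\eta\atd\ata$-equal --- then follows by contraposition, so that adding the atomization conversions does not collapse proof identity.

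For the containment I would argue one fine reduction step at a time, for steps issuing from a term typable in $\Gamma$:
$$ M\to_{\beta\eta\atd\ata}N \text{ fine in }\Gamma,\ M\text{ typable in }\Gamma \ \Longrightarrow\ M=_{\beta\eta\nu}N. $$
A $\beta$- or $\eta$-step lies in $=_{\beta\eta\nu}$ trivially. For an $\atd\ata$-step, the fine version of Prop.~\ref{prop:atomization-vs-cc} recorded after the full version of Theorem~\ref{thm:strict-simulation}, together with (\ref{eq:fine-inclusion}), gives a fine $\ccd\cca\eta$-equality; and each fine $\ccd$- (resp.\ $\cca$-) step on a typable redex coincides with a fine $\ccd'$- (resp.\ $\cca'$-) step, which the calculations preceding this theorem place in $=_{\beta\eta\nu}$ (resp.\ in $=_{\nu}$). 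Since $=_{\beta\eta\nu}$ is an equivalence relation, these per-step containments compose along the reflexive-symmetric-transitive closure to yield that $M=_{\beta\eta\atd\ata}N$ fine in $\Gamma$ implies $M=_{\beta\eta\nu}N$.

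Next I would invoke consistency of the target equality. By \cite{BainbridgeFreydScedrovScottTCS90}, system $\f$ together with the dinaturality conversion $\nu$ possesses a non-degenerate model validating $\beta$, $\eta$ and $\nu$, so the induced equality is not the total relation. Hence there exist $\Gamma$ and terms $M,N$ typable in $\Gamma$, of a common type, with $M\neq_{\beta\eta\nu}N$; concretely, one expects the two polymorphic booleans
$$ \Lb X.\lb x^{X}.\lb y^{X}.x \qquad\text{and}\qquad \Lb X.\lb x^{X}.\lb y^{X}.y $$
of type $\forall X.X\supset X\supset X$ to be separated by the model. By the containment just established, for such $M,N$ the equality $M=_{\beta\eta\atd\ata}N$ cannot be fine in $\Gamma$, which is the desired conclusion.

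The main obstacle is not the final separation but the typing bookkeeping that legitimises the containment. The coincidence of $\ccd$ with $\ccd'$ (and of $\cca$ with $\cca'$) holds only for \emph{typable} redexes, so I must guarantee that every term occurring along the equality chain is typable in $\Gamma$ with the common type of $M$ and $N$: forward steps are covered by fine subject reduction (Prop.~\ref{prop:fine-subject-reduction}) and ordinary subject reduction, while the backward steps of the symmetric closure require the matching subject-expansion, available precisely because the fine side-conditions constrain $C$ and $D$ through (\ref{eq:constraints-def-ccd}). The remaining ingredient --- that the cited model genuinely distinguishes $M$ and $N$ --- is where the whole argument ultimately rests, and it is the reason we borrow a model of dinaturality rather than attempt a purely syntactic separation.
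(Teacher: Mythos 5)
Your proposal is correct and follows essentially the same route as the paper's own proof: containment of fine $\beta\eta\atd\ata$-equality (on typable terms) in $=_{\beta\eta\nu}$ via (\ref{eq:fine-inclusion}) and the $\ccd'\subseteq\,=_{\beta\eta\nu}$, $\cca'\subseteq\,=_{\nu}$ calculations, followed by an appeal to the consistency of $=_{\beta\eta\nu}$ guaranteed by the models of \cite{BainbridgeFreydScedrovScottTCS90} and a contraposition. Your additions --- the candidate pair of polymorphic booleans and the explicit attention to typability of intermediate terms along the equational chain --- are refinements of, not departures from, the paper's (terser) argument.
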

\begin{proof} 
Having in mind (\ref{eq:fine-inclusion}) and the calculations above, we conclude: for all $M,N$ typable in $\Gamma$, if $M=_{\beta\eta\atd\ata}N$ is fine in $\Gamma$, then $M=_{\beta\eta\nu}N$. To complete the proof, we just invoke the consistency of $=_{\beta\eta\nu}$, which guarantees the existence of $M$ and $N$ typable in $\Gamma$ such that $M=_{\beta\eta\nu}N$ does not hold. The consistency of $=_{\beta\eta\nu}$, in turn, follows from the results in \cite{BainbridgeFreydScedrovScottTCS90}, where models of system $\f$ (with product types) are given 
such that every typable term in $\f$ is interpreted by a dinatural transformation, hence validating the dinatural conversion $\nu$.
\end{proof}
\section{Comparison of embeddings}\label{sec:comparison}

We recall the optimized translation of $\ipc$ into $\fat$, introduced by the authors in \cite{JESGFerreira2019}, and denoted $\am{(\cdot)}$. It comprises a translation of formulas, which is the same as in the Russell-Prawitz translation, so $\am A=\rpm A$; and comprises a translation of proof-terms (which induces a translation of derivations).

The translation of proof terms will rely on the following definition, taken from \cite{JESGFerreira2019}:

\begin{defn}\label{def:admissible-constructions} In $\fat$:
\begin{enumerate}
\item Given $M,A,B$, given $i\in\{1,2\}$, we define
$$\ainjn iMAB:=\Lb X.\lb w^{(A\supset X)\wedge(B\supset X)}.\proj i w M\enspace,$$
where the bound variable $X$ is chosen so that $X\notin M,A,B$.
\item Given $M,P,Q,A,B,C$, we define $\acase M{x^A}P{y^B}QC$ by recursion on $C$ as follows:
$$
\begin{array}{rcl}
\acase M{x^A}P{y^B}QX&=&MX\pair{\lb x^A.P}{\lb y^B.Q}\\
\acase M{x^A}{P}{y^B}{Q}{C_1\wedge C_2}&=&\ipair i{\acase M{x^A}{\proj i{P}}{y^B}{\proj i{Q}}{C_i}}\\
\acase M{x^A}P{y^B}Q{C\supset D}&=&\lb z^C.\acase M{x^A}{Pz}{y^B}{Qz}{D}\\
\acase M{x^A}P{y^B}Q{\forall X.C}&=&\Lb X.\acase M{x^A}{PX}{y^B}{QX}C
\end{array}
$$
where, in the third clause, the bound variable $z$ is chosen so that $z\neq x$, $z\neq y$ and $z\notin M,P,Q$; and in the fourth clause, the bound variable $X$ is chosen so that $X\notin M,P,Q,A,B$.
\item Given $M,A$, we define $\aabort MA$ by recursion on $A$ as follows:
$$
\begin{array}{rcl}
\aabort MX&=&MX\\
\aabort M{A_1\wedge A_2}&=&\pair{\aabort M{A_1}}{\aabort M{A_2}}\\
\aabort M{B\supset C}&=&\lb z^B.\aabort M{C}\\
\aabort M{\forall X.A}&=&\Lb X.\aabort MA
\end{array}
$$
where, in the third clause, the bound variable $z$ is chosen so that $z\notin M$; and in the fourth clause, the bound variable $X$ is chosen so that $X\notin M$.
\end{enumerate}
\end{defn}

If we take the typing rules in Fig.~\ref{fig:admissible-typing-rules-F} and replace $\rpinjnsymb$, $\rpcasesymb$, and $\rpabortsymb$ by $\ainjnsymb$, $\acasesymb$, and $\aabortsymb$, respectively, we obtain admissible typing rules in $\fat$. Similarly, if we do the same replacements in Fig.~\ref{fig:compatibility-rules-F}, we obtain admissible compatibility rules in $\fat$. Such admissible rules in $\fat$ have been observed in \cite{JESGFerreira2019}.

\begin{defn}
Given $M\in\ipc$, $\am M$ is defined by recursion on $M$ exactly as in Fig.~\ref{fig:translation-F}, except for the translation of $\injnsymb$, $\casesymb$ and $\abortsymb$, which now reads:
$$
\begin{array}{rcl}
\am{(\injn iMAB)}&=&\ainjn i{\am M}{\am A}{\am B}\qquad\textrm{($i=1,2$)}\\
\am{(\case M{x^A}P{y^B}QC)}&=&\acase{\am M}{x^{\am A}}{\am P}{y^{\am B}}{\am Q}{\am C}\\
\am{(\abort MA)}&=&\aabort{\am M}{\am A}
\end{array}
$$
\end{defn}

\begin{prop}[Type soundness] If $\Gamma\vdash M:A$ in $\ipc$, then $\am{\Gamma}\vdash\am M:\am A$ in $\fat$.
\end{prop}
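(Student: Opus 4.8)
The plan is to argue by induction on the derivation of $\Gamma\vdash M:A$ in $\ipc$ (equivalently, on the structure of the proof term $M$), exactly mirroring the easy proof of the earlier type-soundness proposition for the Russell-Prawitz translation, but invoking the $\fat$-admissible rules in place of the $\f$-admissible ones. The engine of the argument is twofold: first, the formula translation is homomorphic, so that $\am{(A\supset B)}=\am A\supset\am B$, $\am{(A\wedge B)}=\am A\wedge\am B$, $\am{(A\vee B)}=\am A\dvee\am B$ and $\am\perp=\dperp$; second, the $\fat$-versions of the derived typing rules of Fig.~\ref{fig:admissible-typing-rules-F} — that is, the rules for $\ainjnsymb$, $\acasesymb$ and $\aabortsymb$, whose admissibility in $\fat$ is recorded just above and observed in \cite{JESGFerreira2019} — match the translated introduction and elimination inferences of $\ipc$.

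Concretely, I would dispatch the cases as follows. For the rules not involving $\vee$ or $\perp$ ($Ass$, $\supset\!I$, $\supset\!E$, $\wedge I$, $\wedge E1$, $\wedge E2$), the translation $\am{(\cdot)}$ acts homomorphically, so the translated term is built by the corresponding $\fat$ constructor; by the induction hypothesis the premisses translate to $\fat$-derivable sequents of the translated types, and since the formula translation commutes with $\supset$ and $\wedge$ the matching $\fat$ rule yields the desired conclusion. (Here one uses the observations $\am{(MN)}=\am M\,\am N$ and $\am{(\proj iM)}=\proj i{\am M}$, so that the elimination-context constructors are absorbed into the ordinary $\supset\!E$ and $\wedge E$ rules.) For $\vee I$, the term $\injn iMAB$ translates to $\ainjn i{\am M}{\am A}{\am B}$; the induction hypothesis gives $\am\Gamma\vdash\am M:\am A$ (resp.\ $\am B$), and the admissible $\ainjnsymb$ rule concludes $\am A\dvee\am B=\am{(A\vee B)}$. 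For $\vee E$ and $\perp\!E$ one argues identically, using the admissible $\acasesymb$ rule (with premisses $\am M:\am A\dvee\am B$, then $\am P:\am C$ under $x:\am A$ and $\am Q:\am C$ under $y:\am B$) and the admissible $\aabortsymb$ rule (with premiss $\am M:\dperp$), respectively.

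At the outer level there is, honestly, no real obstacle: the proposition follows by a direct induction once the three admissible rules are in hand, and the $\supset\!I$ case raises no side-condition issues since it is translated to $\supset\!I$ rather than to $\forall I$. The entire substance of the result is thus relocated into the admissibility of the $\ainjnsymb$, $\acasesymb$ and $\aabortsymb$ rules, which we are permitted to assume. Should one wish to make the argument self-contained, the hard part would be precisely this admissibility: one proves it by secondary inductions following the recursion in Definition~\ref{def:admissible-constructions} — on the type $C$ for $\acasesymb$ and on the type $A$ for $\aabortsymb$ — and the delicate point there is to verify that every universal instantiation introduced by these recursive definitions is atomic, so that the resulting terms genuinely inhabit $\fat$ and not merely $\f$.
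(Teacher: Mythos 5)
Your proposal is correct and follows exactly the route the paper takes (and took earlier for the Russell--Prawitz case): a straightforward induction on the typing derivation, with the homomorphic formula translation handling the $\supset$/$\wedge$ cases and the admissible $\ainjnsymb$-, $\acasesymb$-, $\aabortsymb$-rules in $\fat$ (imported from \cite{JESGFerreira2019}) handling $\vee I$, $\vee E$ and $\perp\! E$. Your closing remark correctly identifies where the real content lives, namely in the admissibility of those rules, which the paper likewise delegates to the earlier work rather than reproving.
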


To compare how the maps $\rpm{(\cdot)}$ and $\am{(\cdot)}$ translate proof terms amounts to compare $\rpinjnsymb$, $\rpcasesymb$, and $\rpabortsymb$, on the one hand, with $\ainjnsymb$, $\acasesymb$, and $\aabortsymb$, on the other hand. $\rpinjn iMAB$ and $\ainjn iMAB$ are defined in the same way, the repetition is due to stylistic reasons. The other comparisons use atomization conversions.


\begin{lem}\label{lem:comparison-case}
In $\f$: $\rpcase M{x^A}P{y^B}QC\to^*_{\atd}\acase  M{x^A}P{y^B}QC$; additionally, this reduction is fine in $\Gamma$, if $M$ has type $A\dvee B$ in $\Gamma$.
\end{lem}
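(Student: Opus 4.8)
The plan is to prove both parts simultaneously by induction on the type $C$, keeping $M$, $P$, $Q$ and $\Gamma$ universally quantified so that the inductive hypothesis can be reused on proper subformulas of $C$ with altered branch terms. The whole argument is driven by the fact that, by Definition~\ref{def:admissible-constructions-F}, $\rpcase M{x^A}P{y^B}QC$ is literally $MC\pair{\lb x^A.P}{\lb y^B.Q}$, whereas $\acase M{x^A}P{y^B}QC$ is built by recursion on $C$ so that its outermost constructor matches exactly the contractum of one root $\atd$-step.

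In the base case $C=X$ atomic, the two terms coincide, being $MX\pair{\lb x^A.P}{\lb y^B.Q}$, so the empty reduction does the job and is fine in $\Gamma$ by fine reflexivity. For the inductive step with $C=C_1\supset C_2$, I would contract the top $\atd$-redex once, obtaining $\lb z^{C_1}.MC_2\pair{\lb x^A.Pz}{\lb y^B.Qz}$, recognise the body as $\rpcase M{x^A}{Pz}{y^B}{Qz}{C_2}$, apply the inductive hypothesis (with branches $Pz$, $Qz$ and the smaller type $C_2$) to reduce it to $\acase M{x^A}{Pz}{y^B}{Qz}{C_2}$, and then lift this reduction under the binder by compatibility of $\to^*_{\atd}$, reaching $\lb z^{C_1}.\acase M{x^A}{Pz}{y^B}{Qz}{C_2}$, which is exactly $\acase M{x^A}P{y^B}Q{C_1\supset C_2}$. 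The cases $C=C_1\wedge C_2$ and $C=\forall Y.D$ are handled the same way: a single root $\atd$-step yields $\ipair i{MC_i\pair{\lb x^A.Pi}{\lb y^B.Qi}}$, respectively $\Lb Y.MD\pair{\lb x^A.PY}{\lb y^B.QY}$, each component or body being again a $\rpcasesymb$-expression on a strict subformula of $C$ to which the hypothesis applies, the inner reductions being lifted by compatibility under pairing and under $\Lb Y$.

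For the fineness refinement the key observation --- which makes this part essentially cost-free --- is that the head $M$ is never rewritten by any $\atd$-step: atomization only peels off the instantiation argument and pushes it into the branches. Hence every root redex contracted anywhere in the sequence still has $M$ as head, and so, under the assumption that $M$ has type $A\dvee B$ in $\Gamma$, each such step is fine in its ambient environment in the sense of Definition~\ref{def:fine-atomization}. Concretely, the top step is fine in $\Gamma$; in the implication case the inner steps take place in $\Gamma,z:C_1$, where $M$ still has type $A\dvee B$ by weakening, so the inductive hypothesis applies there and closure rule~(i) of Fig.~\ref{fig:fine-closure-rules} transports the result back to $\Gamma$. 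In the conjunction case the two component reductions are lifted by rules~(iv) and~(v), and in the universal case by rule~(vii), the abstraction $\Lb Y$ adding no declaration to $\Gamma$ (only the freshness proviso on $Y$). Fine transitivity then composes the top step with the lifted inner reduction.

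I expect the only point requiring care to be the environment bookkeeping in the implication case: one must verify that $M$ retains type $A\dvee B$ after extending $\Gamma$ with $z:C_1$, so that the inductive hypothesis is genuinely applicable in $\Gamma,z:C_1$, and then thread the fine closure rules correctly. Everything else is routine structural induction, and the fineness side-condition propagates for free precisely because the contracted redexes all share the fixed head $M$ of disjunction type, whose typing is insensitive to the branch terms.
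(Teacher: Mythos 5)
Your proposal is correct and follows essentially the same route as the paper's proof: induction on $C$, one root $\atd$-step matching each clause of the definition of $\acasesymb$, the inductive hypothesis applied to the resulting $\rpcasesymb$-subterms, and the reduction lifted by (fine) compatibility, with the fineness in the implication case secured exactly as in the paper by noting that $z\notin M$ lets $M$ keep type $A\dvee B$ in $\Gamma,z:C_1$ before invoking closure rule (i) and fine transitivity. The only difference is presentational: you carry both statements through a single induction, while the paper verifies the plain reduction first and then revisits the cases for fineness.
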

\begin{proof}
The proof is by induction on $C$. We first check the first statement.

Case $C=X$. We calculate: $LHS = MX\pair{\lb x^A. P}{\lb y^B. Q} = RHS$, using twice the definition of $\rpcasesymb$.

Case $C=C_1\supset C_2$.
$$
\begin{array}{rcll}
LHS&=&M(C_1\supset C_2)\pair{\lb x^A. P}{\lb y^B. Q}&\textrm{(by def. of $\rpcasesymb$)}\\
&\to_{\atd}&\lb z^{C_1}.MC_2\pair{\lb x^A.Pz}{\lb y^B .Qz} &\\
&=&\lb z^{C_1}.\rpcase M{x^A}{Pz}{y^B}{Qz}{C_2} &\\
&\to^*_{\atd}&\lb z^{C_1}.\acase M{x^A}{Pz}{y^B}{Qz}{C_2} &\textrm{(by IH)}\\
&=&RHS&\textrm{(by def. of $\acasesymb$)}
\end{array}
$$
Case $C=C_1\wedge C_2$.
$$
\begin{array}{rcll}
LHS&=&M(C_1\wedge C_2)\pair{\lb x^A. P}{\lb y^B. Q}&\textrm{(by def. of $\rpcasesymb$)}\\
&\to_{\atd}&\ipair i {MC_i\pair{\lb x^A.Pi}{\lb y^B .Qi}} &\\
&=&\ipair i {\rpcase M{x^A}{Pi}{y^B}{Qi}{C_i}} &\\
&\to^*_{\atd}&\ipair i {\acase M{x^A}{Pi}{y^B}{Qi}{C_i}} &\textrm{(by IH)}\\
&=&RHS&\textrm{(by def. of $\acasesymb$)}
\end{array}
$$
Case $C=\forall X. C_0$.
$$
\begin{array}{rcll}
LHS&=&M(\forall X. C_0)\pair{\lb x^A. P}{\lb y^B. Q}&\textrm{(by def. of $\rpcasesymb$)}\\
&\to_{\atd}&\Lb X. MC_0\pair{\lb x^A.PX}{\lb y^B .QX} &\\
&=&\Lb X. \rpcase M{x^A}{PX}{y^B}{QX}{C_0} &\\
&\to^*_{\atd}&\Lb X.\acase M{x^A}{PX}{y^B}{QX}{C_0} &\textrm{(by IH)}\\
&=&RHS&\textrm{(by def. of $\acasesymb$)}
\end{array}
$$

Now the additional statement. In the case $C=X$, there is nothing to check (by definition of fine reflexivity, $LHS\to^*_{\atd}LHS$ in any $\Gamma$). 

Case $C=C_1\supset C_2$. If $M$ has type $A\dvee B$ in $\Gamma$, the first $\atd$-reduction step in the calculation is, by definition, a fine root step in $\Gamma$. Given that $z\notin M$, $M$ has type $A\dvee B$ in $\Delta:=\Gamma,z:C_1$. By IH, the reduction under $\lb z$ is fine in $\Delta$.  Given that the relation ``$N\to^*_{\atd}N'$ is fine in $\Delta$'' enjoys the fine closure rules of Fig.~\ref{fig:fine-closure-rules}, we conclude that the second reduction in the calculation is fine in $\Gamma$.

The remaining cases are similar to, but simpler than this one.
\end{proof}

\begin{lem}\label{lem:comparison-abort}
In $\f$, $\rpabort MC\to^*_{\ata}\aabort  MC$; additionally, this reduction is fine in $\Gamma$, if $M$ has type $\dperp$ in $\Gamma$.
\end{lem}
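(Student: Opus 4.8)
The plan is to prove Lemma~\ref{lem:comparison-abort} by induction on the formula $C$, in exact parallel with the proof of Lemma~\ref{lem:comparison-case} just completed, but using the $\ata$-conversion rules and the recursive definition of $\aabortsymb$ in place of their $\atd$/$\acasesymb$ counterparts. Recall that $\rpabort MC$ is by definition just $MC$, and that $\aabort MC$ is defined by recursion on $C$. So the whole statement is really a comparison between the single universal instantiation $MC$ and the fully atomized term $\aabort MC$, driven by the clauses of Definition~\ref{def:admissible-constructions}.

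First I would treat the base case $C=X$: here $\rpabort MX = MX = \aabort MX$ by the first clause of the recursive definition of $\aabortsymb$, so $LHS \to^*_{\ata} RHS$ holds trivially by reflexivity, and there is nothing to check for the fineness addendum. For the inductive step I would split on the shape of $C$. When $C=C_1\supset C_2$, I would compute
$$
\rpabort M{C_1\supset C_2} = M(C_1\supset C_2) \to_{\ata} \lb z^{C_1}.MC_2 = \lb z^{C_1}.\rpabort M{C_2} \to^*_{\ata} \lb z^{C_1}.\aabort M{C_2} = \aabort M{C_1\supset C_2},
$$
where the single $\ata$-step is the first implication-rule of Definition~\ref{def:atomization-rules}(2), the middle equality unfolds $\rpabortsymb$, the starred reduction is the induction hypothesis applied to $C_2$, and the last equality is the third clause of the $\aabortsymb$ recursion. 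The cases $C=C_1\wedge C_2$ and $C=\forall X.C_0$ are entirely analogous: one $\ata$-step produces $\ipair i{MC_i}$ (resp.\ $\Lb X.MC_0$), each component is $\rpabort M{C_i}$ (resp.\ $\rpabort M{C_0}$), the induction hypothesis rewrites each to $\aabort M{C_i}$ (resp.\ $\aabort M{C_0}$), and the matching clause of the recursive definition closes the case.

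For the additional fineness statement I would argue exactly as in Lemma~\ref{lem:comparison-case}, using the hypothesis that $M$ has type $\dperp$ in $\Gamma$. In each inductive case the initial $\ata$-step is, by Definition~\ref{def:fine-atomization}(2--3), a fine root step in $\Gamma$ precisely because $M:\dperp$ in $\Gamma$. For the recursive reduction, I would note that in the $\forall$- and $\wedge$-cases the environment is unchanged and $M$ still has type $\dperp$ there, so the induction hypothesis applies directly; in the $\supset$-case the reduction takes place under $\lb z^{C_1}$, so I pass to $\Delta := \Gamma, z:C_1$, observe that $z\notin M$ forces $M$ to retain type $\dperp$ in $\Delta$, apply the induction hypothesis to obtain a fine reduction in $\Delta$, and then invoke the fine closure rules of Fig.~\ref{fig:fine-closure-rules} (rule (i) for the abstraction, and (vii)/(vi)/(v) as needed for $\Lb$, projection and pairing) to lift fineness back to $\Gamma$.

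I do not anticipate a genuine obstacle here, since the structure mirrors the already-proved Lemma~\ref{lem:comparison-case}; the only point demanding care is the bookkeeping of environments in the fineness claim, namely verifying that the type $\dperp$ of $M$ is preserved when the induction hypothesis is invoked under a binder (the $\supset$-case) and that the fine closure rules then propagate fineness through the surrounding $\lb$, $\Lb$, projection, or pairing constructor. This is exactly the manoeuvre carried out at the end of the proof of Lemma~\ref{lem:comparison-case}, so I would simply remark that the remaining cases are similar to, but simpler than, the implication case and omit the routine details.
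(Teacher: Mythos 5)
Your proof is correct and follows essentially the same route as the paper's: induction on $C$, with the base case $C=X$ an identity, each inductive case consisting of one root $\ata$-step followed by the induction hypothesis under the corresponding constructor, and the fineness addendum handled exactly as in Lemma~\ref{lem:comparison-case} (fine root step from $M:\dperp$, environment extension $\Gamma,z:C_1$ in the implication case, and the fine closure rules of Fig.~\ref{fig:fine-closure-rules}). The only cosmetic difference is that the paper simply says the fineness justification is "similar to the previous lemma," whereas you spell it out; your passing mention of the projection closure rule is superfluous here (no projections arise in the $\aabortsymb$ clauses) but harmless.
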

\begin{proof}
The proof is by induction on $C$. We check first the first statement.

Case $C=X$. Then $LHS=MX=RHS$, using twice the definition of $\rpabortsymb$.

Case $C=C_1\supset C_2$.
$$
\begin{array}{rcll}
LHS&=&M(C_1\supset C_2)&\textrm{(by def. of $\rpabortsymb$)}\\
&\to_{\ata}&\lb z^{C_1}.MC_2 &\\
&=&\lb z^{C_1}.\rpabort M{C_2} &\\
&\to^*_{\ata}&\lb z^{C_1}.\aabort M{C_2} &\textrm{(by IH)}\\
&=&RHS&\textrm{(by def. of $\aabortsymb$)}
\end{array}
$$
Case $C=C_1\wedge C_2$.
$$
\begin{array}{rcll}
LHS&=&M(C_1\wedge C_2)&\textrm{(by def. of $\rpabortsymb$)}\\
&\to_{\ata}&\pair  {MC_1}{MC_2} &\\
&=&\pair  {\rpabort M{C_1}} {\rpabort M{C_2}} &\\
&\to^*_{\ata}&\pair {\aabort M{C_1}} {\aabort M{C_2}} &\textrm{(by IH)}\\
&=&RHS&\textrm{(by def. of $\aabortsymb$)}
\end{array}
$$
Case $C=\forall X. C_0$.
$$
\begin{array}{rcll}
LHS&=&M(\forall X. C_0)&\textrm{(by def. of $\rpabortsymb$)}\\
&\to_{\ata}&\Lb X. MC_0 &\\
&=&\Lb X. \rpabort M{C_0} &\\
&\to^*_{\ata}&\Lb X.\aabort M{C_0} &\textrm{(by IH)}\\
&=&RHS&\textrm{(by def. of $\acasesymb$)}
\end{array}
$$

The justification of the additional statement in each case of the proof is similar to what was done in the proof of the previous lemma. 
\end{proof}

\begin{prop}[Comparison of maps: proofs]\label{prop:comparison-of-maps}
For all $M\in\ipc$, $\rpm M\to^*_{\atd\ata}\am{M}$; additionally, this relation is fine in $\rpm\Gamma$, if $M$ is typable in $\Gamma$. Hence if $M$ is typable in $\Gamma$, $\am M$ is the atomic normal form of $\rpm M$ in $\rpm\Gamma$ (=$\am\Gamma$).
\end{prop}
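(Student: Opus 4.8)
The plan is to prove the (unqualified) reduction statement and the fineness statement simultaneously, by induction on the structure of $M$. The two translations $\rpm{(\cdot)}$ and $\am{(\cdot)}$ are given by the same recursion and differ only in the clauses for $\injnsymb$, $\casesymb$ and $\abortsymb$; moreover formulas are translated identically, so $\rpm A=\am A$ throughout. Hence for the variable, $\lambda$-abstraction, application, pairing and projection cases the two translations carry the same outer constructor, and the result follows from the induction hypotheses on the immediate subterms together with the compatibility of $\to^*_{\atd\ata}$ (this relation is the reflexive–transitive closure of a compatible relation, hence compatible, so Lemma~\ref{lem:compatibility-F} applies with $R={\to^*_{\atd\ata}}$). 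The $\injnsymb$ case is equally direct: since $\rpinjn iMAB$ and $\ainjn iMAB$ are literally the same construction and $\rpm A=\am A$, the induction hypothesis on $M$ and compatibility give $\rpm{(\injn iMAB)}=\rpinjn i{\rpm M}{\rpm A}{\rpm B}\to^*_{\atd\ata}\rpinjn i{\am M}{\am A}{\am B}=\am{(\injn iMAB)}$.

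The two interesting cases, $\casesymb$ and $\abortsymb$, I would handle in two stages. For $\casesymb$, first apply the induction hypotheses $\rpm M\to^*_{\atd\ata}\am M$, $\rpm P\to^*_{\atd\ata}\am P$, $\rpm Q\to^*_{\atd\ata}\am Q$ through the compatibility rules of Fig.~\ref{fig:compatibility-rules-F}, rewriting the three arguments one at a time:
$$\rpcase{\rpm M}{x^{\rpm A}}{\rpm P}{y^{\rpm B}}{\rpm Q}{\rpm C}\to^*_{\atd\ata}\rpcase{\am M}{x^{\rpm A}}{\am P}{y^{\rpm B}}{\am Q}{\rpm C}\,.$$
Then invoke Lemma~\ref{lem:comparison-case} to collapse the $\rpcasesymb$ into the $\acasesymb$, namely $\rpcase{\am M}{x}{\am P}{y}{\am Q}{\am C}\to^*_{\atd}\acase{\am M}{x}{\am P}{y}{\am Q}{\am C}$, which is exactly $\am{(\case M{x^A}P{y^B}QC)}$ (using $\rpm C=\am C$). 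Composing the two stages and noting $\to^*_{\atd}\subseteq\to^*_{\atd\ata}$ gives the required reduction. The $\abortsymb$ case is identical, using compatibility for $\rpabortsymb$ and then Lemma~\ref{lem:comparison-abort} in place of Lemma~\ref{lem:comparison-case}.

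For the fineness refinement I would rerun the same induction while tracking the environment, relying on the fine versions of the closure rules of Fig.~\ref{fig:fine-closure-rules} and of the compatibility rules of Fig.~\ref{fig:compatibility-rules-F} (the latter being the ``derivable fine rules'' already used in the proof of the full version of Theorem~\ref{thm:strict-simulation}). The only genuinely new point is to verify the side conditions of the fine halves of Lemmas~\ref{lem:comparison-case} and~\ref{lem:comparison-abort}. Suppose $M$ is typable in $\Gamma$. In the $\casesymb$ case the scrutinee of type $A\vee B$ translates, by type soundness, to $\rpm M$ of type $\rpm A\dvee\rpm B$ in $\rpm\Gamma$; the first (compatibility) stage is fine in $\rpm\Gamma$ by applying the induction hypotheses in the appropriate extended environments (for instance $\rpm P$ reduces finely in $x:\rpm A,\rpm\Gamma$), and fine subject reduction (Prop.~\ref{prop:fine-subject-reduction}) then ensures that $\am M$ still has type $\rpm A\dvee\rpm B$ in $\rpm\Gamma$, which is precisely the condition that makes the Lemma~\ref{lem:comparison-case} stage fine in $\rpm\Gamma$. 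The $\abortsymb$ case is analogous, with $\dperp$ in place of $\rpm A\dvee\rpm B$. I expect this environment- and type-bookkeeping to be the main (though essentially routine) obstacle: one must apply each induction hypothesis in the correctly extended environment and carry the type of the scrutinee across the reductions via fine subject reduction.

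Finally, the ``Hence'' clause is immediate. A $\fat$-term contains only atomic universal instantiations, so it contains neither a $\atd$-redex nor a $\ata$-redex (both require a non-atomic instantiated formula); thus $\am M$, being a $\fat$-term, is a fine $\atd\ata$-normal form in every environment. Since $\rpm M\to^*_{\atd\ata}\am M$ is fine in $\rpm\Gamma$ whenever $M$ is typable in $\Gamma$, and since by Prop.~\ref{prop:atomic-normal-form} the fine $\atd\ata$-normal form of $\rpm M$ in $\rpm\Gamma$ is unique, $\am M$ must be that atomic normal form.
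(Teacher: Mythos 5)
Your proof is correct and follows essentially the same route as the paper's: induction on $M$ with the $\casesymb$ and $\abortsymb$ cases discharged by Lemmas \ref{lem:comparison-case} and \ref{lem:comparison-abort}, and the ``Hence'' clause obtained by observing that $\am M\in\fat$ is a fine $\atd\ata$-normal form and invoking the uniqueness in Proposition \ref{prop:atomic-normal-form}. The paper states all of this in one line; your write-up merely makes explicit the compatibility and fine-environment bookkeeping (via Lemma \ref{lem:compatibility-F}, Figure \ref{fig:fine-closure-rules}, and fine subject reduction) that the paper leaves implicit.
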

\begin{proof}
By induction on $M$. There are only two interesting cases, which follow by Lemmas \ref{lem:comparison-case} and \ref{lem:comparison-abort}. Notice $\am M$ is a fine $\atd\ata$-normal form, because $\am M\in\fat$.  Hence, by Proposition \ref{prop:atomic-normal-form}, $\am M$ is the unique $\atd\ata$-normal form of $\rpm M$ in $\rpm{\Gamma}$.
\end{proof}

We now want to compare how the maps $\rpm{(\cdot)}$ and $\am{(\cdot)}$ translate proof-reduction steps. Obviously, $R$-reduction steps, with $R\in\{\betai,\betac,\etai,\etac\}$, are translated in the same way by the two maps. Only the $R$-reduction steps, for $R$ a reduction rule pertaining to $\vee$ or $\perp$, are pertinent for the comparison, and so we concentrate on these from now on. The translation of such steps by $\rpm{(\cdot)}$ was detailed in Theorem \ref{thm:strict-simulation}, whereas the translation by $\am{(\cdot)}$ was detailed in \cite{JESGFerreira2019}. We want to see how the two pictures merge.

Let $R\in\{\betad,\pii,\pic,\abi,\abc,\abd,\aba\}$. A reduction step $M\to_R N$ in $\ipc$ gives rise to the diagram:
\newarrow{Many}----{>>}
\begin{equation}\label{eq:simple-picture}
\begin{diagram}[small]
M&&&&\am M&&\lMany^{\atd\ata}&&\rpm M\\
\dTo^R&&&&\dMany^{\beta\eta}&&&&\dMany_{\beta\eta\ccd\cca}\\
N&&&&\am N&&\lMany_{\atd\ata}&&\rpm N
\end{diagram}
\end{equation}
This follows from Theorem \ref{thm:strict-simulation} above, and also from Theorem 1 in \cite{JESGFerreira2019}, which guarantees $\am{M}\to^+_{\beta\eta}\am{N}$, whenever $M\to_R N$ in $\ipc$. The $\atd\ata$-reductions that bridges the two translations come from Proposition \ref{prop:comparison-of-maps}.

This picture has to be generalized, in order to accommodate the remaining cases $R\in\{\etad,\pid,\pia\}$. In these cases, the interaction between the terms translated with $\rpm{(\cdot)}$ and those translated with $\am{(\cdot)}$ will be richer than what can be expressed with Proposition \ref{prop:comparison-of-maps}. For this reason, we have to revisit Lemmas 6, 11 and 12 in \cite{JESGFerreira2019}, dedicated to $\etad$, $\pid$ and $\pia$, respectively. We do this next, but put the proofs in the appendix, since they are, to some extent, a repetition of the proofs already given in \cite{JESGFerreira2019}.

\begin{lem}[Rule $\etad$]\label{lem:eta-disjunction} Let $M\in\fat$ and $M'\in\f$ such that $M'\to_{\atd\ata}^*M$. Let
$$
\begin{array}{rcl}
LHS&=&\acase M{x^A}{\ainjn 1xAB}{y^B}{\ainjn 2yAB}{A\dvee B}\\
LHS'&=&\rpcase {M'}{x^A}{\ainjn 1xAB}{y^B}{\ainjn 2yAB}{A\dvee B}\\
RHS&=&M
\end{array}
$$
Then there is $Q\in\fat$ such that
\newarrow{Many}----{>>}
\begin{diagram}[small]
LHS&&\lMany^{\atd\ata}&&LHS'\\
&\rdMany_{\beta}&&\ldMany_{\delta\atd\ata}&\\
&&Q&&\\
&\ldMany_{\eta}&&&\\
RHS&&&&
\end{diagram}
\end{lem}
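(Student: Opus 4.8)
The plan is to exhibit a single term $Q$ that serves as the common vertex of the lower triangle, and then to read off the four reductions of the diagram one by one. Guided by the $\etad$-case of Theorem \ref{thm:strict-simulation}, I take
$$
Q := \Lb X.\lb w^{(A\supset X)\wedge(B\supset X)}.MX\pair{\lb x^A.\proj 1 w x}{\lb y^B.\proj 2 w y}\enspace.
$$
Since $M\in\fat$ and the only instantiation occurring in $Q$ is the atomic $MX$, we have $Q\in\fat$, as required. The top edge $LHS'\to^*_{\atd\ata}LHS$ I obtain in two stages: first reduce the head $M'\to^*_{\atd\ata}M$ inside the $\rpcasesymb$ by compatibility, turning $LHS'$ into $\rpcase{M}{x^A}{\ainjn 1xAB}{y^B}{\ainjn 2yAB}{A\dvee B}$; then apply Lemma \ref{lem:comparison-case} (with branches $\ainjn 1xAB$, $\ainjn 2yAB$ and $C=A\dvee B$), which gives $\rpcase{M}{x^A}{\ainjn 1xAB}{y^B}{\ainjn 2yAB}{A\dvee B}\to^*_{\atd}LHS$.

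For the $\beta$-edge $LHS\to^*_\beta Q$, I unfold $\acasesymb$ at the type $A\dvee B=\forall X.(((A\supset X)\wedge(B\supset X))\supset X)$ using its recursive definition in Definition \ref{def:admissible-constructions}: the $\forall$-clause produces the $\Lb X$, the $\supset$-clause produces the $\lb w$, and the atomic clause produces the head $MX\pair{\lb x^A.(\ainjn 1xAB)Xw}{\lb y^B.(\ainjn 2yAB)Xw}$. Each of the subterms $(\ainjn 1xAB)Xw$ and $(\ainjn 2yAB)Xw$ is a double redex that $\beta$-reduces, via one $\betaall$- and one $\betai$-step, to $\proj 1 w x$ and $\proj 2 w y$ respectively, so the unfolded term $\beta$-reduces exactly to $Q$. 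For the $\delta\atd\ata$-edge $LHS'\to^*_{\delta\atd\ata}Q$, I first replay on $LHS'$ the two $\delta$-steps from the $\etad$-calculation of Theorem \ref{thm:strict-simulation} (the $\forall$-clause of Definition \ref{def:delta} pulls out $\Lb X$, and the $\supset$-clause pulls out $\lb w$), reaching $\Lb X.\lb w.M'X\pair{\lb x^A.\proj 1 w x}{\lb y^B.\proj 2 w y}$; then reducing $M'\to^*_{\atd\ata}M$ inside by compatibility lands precisely on $Q$.

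Finally, the $\eta$-edge $Q\to^*_\eta M=RHS$ is the tail of the $\etad$-calculation in Theorem \ref{thm:strict-simulation}: two $\etai$-steps erase $x$ and $y$, one $\etac$-step collapses $\pair{\proj 1 w}{\proj 2 w}$ to $w$, a further $\etai$-step removes $w$, and a final $\etaall$-step removes $X$ (legitimate since $X\notin M$, as $M\in\fat$ and $X$ is freshly bound). This closes the diagram. I expect the only real work to lie in the bookkeeping of the $\acasesymb$-unfolding along the $\beta$-edge — keeping the three recursive clauses and the freshly chosen bound variables straight, in particular the $\alpha$-renaming of the variable internal to $\ainjnsymb$ against the $\acasesymb$-bound $w$ — together with verifying that the two $\delta$-steps on $LHS'$ genuinely yield the $M'$-version of $Q$. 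Both are routine once the shape of $Q$ is fixed, and no step presents a genuine obstacle beyond careful matching against the clauses of Definition \ref{def:admissible-constructions} and the $\delta$-rules of Definition \ref{def:delta}.
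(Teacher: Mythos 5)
Your proposal is correct and follows essentially the same route as the paper's proof: the same term $Q=\Lb X\lb w.MX\pair{\lb x.\proj 1wx}{\lb y.\proj 2wy}$, the same use of Lemma \ref{lem:comparison-case} plus compatibility for the top edge, the same $\beta$- and $\eta$-calculations, and the same two $\delta$-steps for the $LHS'$-edge. The only (immaterial) differences are bookkeeping: you fully unfold $\acasesymb$ before firing the four $\beta$-redexes where the paper interleaves unfolding with reduction, and on the $\delta\atd\ata$-edge you perform the $\delta$-steps before the $M'\to^*_{\atd\ata}M$ reduction where the paper does the reverse.
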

\begin{proof} Lemma 6 in \cite{JESGFerreira2019} just states $LHS\to_{\beta\eta}^+RHS$. The proof is a direct calculation. See the appendix for details.
\end{proof}

\begin{lem}[Rule $\pid$]\label{lem:pi-disjunction} Let $M,P_1,P_2,Q_1,Q_2\in\fat$ and $M',P'_1,P'_2,Q'_1,Q'_2\in\f$. Let
$$
\begin{array}{rcl}
LHS&=&\acase{\acase M{x_1}{P_1}{x_2}{P_2}{B_1\dvee B_2}}{y_1}{Q_1}{y_2}{Q_2}{C}\\
RHS&=&\acase M{x_1}{\acase{P_1}{y_1}{Q_1}{y_2}{Q_2}C}{x_2}{\acase{P_2}{y_1}{Q_1}{y_2}{Q_2}C}C\\
RHS'&=&\rpcase {M'}{x_1}{\rpcase{P'_1}{y_1}{Q'_1}{y_2}{Q'_2}C}{x_2}{\rpcase{P'_2}{y_1}{Q'_1}{y_2}{Q'_2}C}C
\end{array}$$
Suppose $T'\to_{\atd\ata}^*T$, for $T=M,P_1,P_2,Q_1,Q_2$. Then, there is $Q\in\fat$ such that
\newarrow{Many}----{>>}
\begin{diagram}[small]
LHS&&&&\\
&\rdMany_{\beta}&&&\\
&&Q&&\\
&\ruMany_{\beta}&&\luMany{\ccd\atd\ata}&\\
RHS&&\lMany_{\atd\ata}&&RHS'
\end{diagram}
\end{lem}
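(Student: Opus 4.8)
The plan is to establish the diagram in two logically independent pieces: the bottom edge $RHS'\to^*_{\atd\ata}RHS$, which needs no induction, and the three edges into the common reduct $Q$, which I would obtain by induction on the type $C$ that drives the recursion in the definition of $\acasesymb$ (Def.~\ref{def:admissible-constructions}). The whole argument parallels the $\pid$ case of Theorem~\ref{thm:strict-simulation} and refines Lemma~11 of \cite{JESGFerreira2019}; the genuinely new ingredient is that each $\beta$-step performed on the $\fat$-side (on $RHS$) is matched, on the $\f$-side (on $RHS'$), by one head $\atd$-step followed by $\ccd$-steps.

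For the bottom edge I would first use the hypotheses $T'\to^*_{\atd\ata}T$ together with compatibility of $\to_{\atd\ata}$ to rewrite $RHS'$ into the term obtained from it by replacing every primed subterm by its unprimed counterpart. I would then invoke Lemma~\ref{lem:comparison-case} twice: first on the two inner occurrences, each giving $\rpcase{P_i}{y_1}{Q_1}{y_2}{Q_2}C\to^*_{\atd}\acase{P_i}{y_1}{Q_1}{y_2}{Q_2}C$ under the branches of the outer expression (by compatibility), and then on the resulting outer $\rpcasesymb$, reaching exactly $RHS$. This settles $RHS'\to^*_{\atd\ata}RHS$ with no appeal to the induction.

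For the common reduct I would induct on $C$. In the base case $C=X$, unfolding the inner case gives $\acase M{x_1}{P_1}{x_2}{P_2}{B_1\dvee B_2}=\Lb Z.\lb w.MZ\pair{\lb x_1.P_1Zw}{\lb x_2.P_2Zw}$ (a value of type $B_1\dvee B_2$); feeding this into the outer case and performing one $\betaall$- and one $\betai$-step yields $Q:=MX\pair{\lb x_1.P_1X\pair{\lb y_1.Q_1}{\lb y_2.Q_2}}{\lb x_2.P_2X\pair{\lb y_1.Q_1}{\lb y_2.Q_2}}$, so $LHS\to^*_\beta Q$ while $RHS=Q$ on the nose, and $RHS'\to^*_{\atd\ata}Q$ follows purely from the hypotheses (no $\ccd$-step is needed since $X$ is atomic and the head is not an $\atd$-redex). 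In each inductive case I would unfold one layer of $\acasesymb$ and $\rpcasesymb$ according to the shape of $C$: for $C=C_1\supset C_2$ the outer case becomes $\lb z.(\cdots)$, on the $RHS$ side the subterms $(\acase{P_i}{y_1}{Q_1}{y_2}{Q_2}{C_1\supset C_2})z$ are $\beta$-redexes that I contract, while on the $RHS'$ side a single head $\atd$-step exposes $(\rpcase{P'_i}{y_1}{Q'_1}{y_2}{Q'_2}{C_1\supset C_2})z$, which are exactly $\ccd$-redexes (Def.~\ref{def:epsilon}) that I contract by $\ccd$. Both sides thereby reach $\lb z.(\text{the analogous expression for }C_2\text{ with branches }Q_iz)$, and I close by the induction hypothesis at $C_2$, noting that the hypotheses $Q'_iz\to^*_{\atd\ata}Q_iz$ survive by compatibility; setting $Q:=\lb z.Q_0$ with $Q_0$ the inductive common reduct then gives all three edges. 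The cases $C=C_1\wedge C_2$ and $C=\forall Y.C_0$ are handled identically, using the conjunction and $\forall$ clauses of $\atd$ and $\ccd$, the eliminators being a projection $i$ and a type $Y$ respectively.

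The main obstacle is the precise matching, in the inductive cases, between the $\beta$-redexes created on the $\fat$-side and the $\ccd$-redexes created on the $\f$-side: one must verify that contracting the head $\atd$-redex of $RHS'$ places each inner $\rpcasesymb$ immediately in front of the eliminator that turns it into a genuine $\ccd$-redex, and that after these $\ccd$-steps the three terms really share the single residual $\lb z.(\cdots)$, $\ipair i{\cdots}$, or $\Lb Y.(\cdots)$ form demanded by the recursion. The conjunction clause is the most delicate because of the duplication of the branches: each $Q_i$ is projected and the induction hypothesis is invoked twice, once per component, so the bookkeeping of which projected branch feeds which recursive call has to be tracked with care. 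No phenomenon beyond what already appears in the simulation of $\pid$ in Theorem~\ref{thm:strict-simulation} is involved, which is why the detailed calculation can safely be relegated to the appendix.
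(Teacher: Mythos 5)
Your proposal is correct and takes essentially the same approach as the paper's own proof: the bottom edge $RHS'\to^*_{\atd\ata}RHS$ comes from the hypotheses plus Lemma~\ref{lem:comparison-case}, and the three edges into $Q$ are established by induction on $C$, with the same choice $Q:=RHS$ in the atomic base case and, in each inductive case, the same matching of the administrative $\beta$-steps on the $\fat$-side against one head $\atd$-step followed by $\ccd$-steps on the $\f$-side, closing with the induction hypothesis (applied twice, componentwise, in the conjunction case). The compatibility/closure arguments you invoke (under $\lb z^{C_1}$, pairing, and $\Lambda Y$, and for transporting the hypotheses to the branches $Q_iz$, $\proj i{Q_j}$, $Q_iY$) are exactly those used in the appendix.
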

\begin{proof} For typographic reasons, we do not write the types of bound variables. Variables $y_1$ and $y_2$ have type $B_1$ and $B_2$, respectively. Variables $x_1$ and $x_2$ have type $A_1$ and $A_2$, where $A_1\dvee A_2$ is the type of $M$. These types stay unchanged throughout the proof.

Lemma 11 in \cite{JESGFerreira2019} just states $LHS=_{\beta}RHS$. The proof of the present lemma is by induction on $C$. See the appendix for details.
\end{proof}

\begin{lem}[Rule $\pia$]\label{lem:pi-absurdity} Let $M,P,Q\in\fat$ and $M',P',Q'\in\f$. Let
$$
\begin{array}{rcl}
LHS&=&\aabort{\acase M{x^A}P{y^B}Q{\dperp}}C\\
RHS&=&\qquad\acase M{x^A}{\aabort PC}{y^B}{\aabort QC}C\\
RHS'&=&\qquad\rpcase {M'}{x^A}{\rpabort {P'}C}{y^B}{\rpabort {Q'}C}C
\end{array}
$$
Suppose $T'\to_{\atd\ata}^*T$, for $T=M,P,Q$. Then, there is $Q\in\fat$ such that
\newarrow{Many}----{>>}
\begin{diagram}[small]
LHS&&&&\\
&\rdMany_{\beta}&&&\\
&&Q&&\\
&\ruMany_{\beta}&&\luMany{\cca\atd\ata}&\\
RHS&&\lMany_{\atd\ata}&&RHS'
\end{diagram}
\end{lem}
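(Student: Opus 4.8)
The plan is to argue by induction on the structure of the formula $C$, paralleling the type‑recursion in the definitions of $\aabortsymb$ and $\acasesymb$ (Definition~\ref{def:admissible-constructions}). The hypotheses $M'\to_{\atd\ata}^*M$, $P'\to_{\atd\ata}^*P$, $Q'\to_{\atd\ata}^*Q$ do not mention $C$, so they persist when I pass to an immediate subformula; the induction is thus genuinely on $C$ with the six terms held fixed. At each stage I would produce a single $Q\in\fat$ witnessing the three reductions that meet there, $LHS\to_\beta^*Q$, $RHS\to_\beta^*Q$ and $RHS'\to_{\cca\atd\ata}^*Q$, and would record the bottom edge $RHS'\to_{\atd\ata}^*RHS$ separately.

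For the base case $C=X$, I would first unfold the inner case term: since $\dperp=\forall Z.Z$, the $\forall$- and atomic clauses of $\acasesymb$ give $\acase M{x^A}P{y^B}Q{\dperp}=\Lb Z.MZ\pair{\lb x^A.PZ}{\lb y^B.QZ}$. Then $LHS=\aabort{\bigl(\Lb Z.MZ\pair{\lb x^A.PZ}{\lb y^B.QZ}\bigr)}{X}$ equals $\bigl(\Lb Z.MZ\pair{\lb x^A.PZ}{\lb y^B.QZ}\bigr)X$, which contracts by one $\betaall$-step to $MX\pair{\lb x^A.PX}{\lb y^B.QX}$; this is exactly $RHS$, so I take $Q:=RHS$. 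The edge $RHS'\to_{\atd\ata}^*Q$ (hence also $RHS'\to_{\atd\ata}^*RHS$) is then immediate from the hypotheses and the compatibility rules of Fig.~\ref{fig:compatibility-rules-F}, since $X$ is atomic and no $\ata$-redex arises.

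For the inductive step I would peel the outermost connective of $C$. Writing $N:=\acase M{x^A}P{y^B}Q{\dperp}$, the left term $LHS=\aabort NC$ decomposes directly via the recursion of $\aabortsymb$ into $\lb z^{C_1}.\aabort N{C_2}$, $\pair{\aabort N{C_1}}{\aabort N{C_2}}$, or $\Lb Y.\aabort N{C_0}$, that is, into the same top constructor applied to the $LHS$-instance(s) at the subformula(s). The right term $RHS$ decomposes the same way, except that composing the $\acasesymb$-clause with the abstraction/pair/$\Lb$ produced by $\aabortsymb$ leaves administrative redexes, cleared by $\betai$-steps (implication), $\betac$-projections (conjunction) or $\betaall$-steps (universal), again exhibiting the top constructor applied to the $RHS$-instance(s). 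The induction hypothesis supplies a common reduct $\hat Q$ for the $LHS$- and $RHS$-instances, and $Q$ is that top constructor applied to $\hat Q$; this settles $LHS\to_\beta^*Q$ and $RHS\to_\beta^*Q$ up to renaming the vacuous bound variable. On the $RHS'$ side, $RHS'=M'C\pair{\lb x^A.P'C}{\lb y^B.Q'C}$ is a $\atd$-redex, so one $\atd$-step pushes the pairing through the top constructor of $C$; the instantiations it leaves ($P'C\,z$, $P'C\,i$, $P'C\,Y$ and their $Q'$-analogues) are $\cca$-redexes that collapse in a few $\cca$-steps to $P'C_2$, $P'C_i$, $P'C_0$, producing the top constructor applied to the $RHS'$-instance at the subformula, which the induction hypothesis sends to $\hat Q$ by $\cca\atd\ata$-reduction. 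Since $Q$ is assembled from the $\fat$-terms $\hat Q$ by $\lb$, pairing, $\Lb$ and atomic instantiation, $Q\in\fat$.

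The bottom edge $RHS'\to_{\atd\ata}^*RHS$ I would obtain uniformly rather than by induction: the compatibility rules of Fig.~\ref{fig:compatibility-rules-F} lift $M'\to_{\atd\ata}^*M$, while $\rpabort{P'}C=P'C\to_{\atd\ata}^*\aabort PC$ follows by lifting $P'\to_{\atd\ata}^*P$ and then invoking Lemma~\ref{lem:comparison-abort} (and likewise for $Q'$); rewriting the resulting $\rpcasesymb$-term into the matching $\acasesymb$-term by Lemma~\ref{lem:comparison-case} yields exactly $RHS$. I expect the genuine obstacle to be the conjunction case, where the $\atd$-step duplicates $M'$ and one must check that the $\betac$-projections on the $RHS$ side and the $\cca$-collapses of $P'(C_1\wedge C_2)i$ on the $RHS'$ side select matching indices, so that both halves feed the correct sub-instance $\hat Q_i$. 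A lesser but real nuisance, in the implication and universal cases, is keeping the two distinct fresh variables apart — the genuinely vacuous $z$ introduced by $\aabortsymb$ on an implication versus the abstraction variable introduced by $\acasesymb$ — and verifying that the administrative redexes of the form $(\lb z.\aabort P{C_2})w$ contract cleanly precisely because $z$ does not occur in $\aabort P{C_2}$.
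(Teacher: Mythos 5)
Your proposal is correct and follows essentially the same route as the paper's own proof: induction on $C$ with $Q:=RHS$ after one $\betaall$-step in the atomic base case, administrative $\beta$-steps on the $RHS$ side and one $\atd$-step followed by $\cca$-collapses on the $RHS'$ side in each inductive case, and the bottom edge $RHS'\to_{\atd\ata}^*RHS$ obtained uniformly from the hypotheses via Lemmas~\ref{lem:comparison-case} and~\ref{lem:comparison-abort}. The technical points you flag (index matching under the duplicating conjunction step, freshness of the vacuous variable $z$) are exactly the ones the paper handles implicitly, and your treatment of them is sound.
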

\begin{proof} Lemma 12 in \cite{JESGFerreira2019} just states $LHS=_{\beta}RHS$. The proof is by induction on $C$. See the appendix for details.
\end{proof}

In order to avoid overloading too much the paper, we refrained from stating the full version of Lemmas \ref{lem:eta-disjunction}, \ref{lem:pi-disjunction}, and \ref{lem:pi-absurdity}. But the missing bits say the reduction from $LHS'$ or $RHS'$ to $Q$ is fine in $\Gamma$, provided $M$ has type $A\dvee B$ or $A_1\dvee A_2$ in $\Gamma$, and $P_1$ has type $B_1\dvee B_2$ or $\dperp$ in $\Gamma,x:A_1$ and similarly for $P_2$.

For each $R\in\{\etad,\pid,\pia\}$, we now show, using Lemmas \ref{lem:eta-disjunction}, \ref{lem:pi-disjunction} and \ref{lem:pi-absurdity}, a diagram in the style of (\ref{eq:simple-picture}), with a left half in $\ipc$ and a right half in $\f$. For instance, for $R=\etad$, the left half is $M\to_{\etad}N$ and the right half has the shape of the diagram in the statement of Lemma \ref{lem:eta-disjunction}, with $LHS$, $RHS$ and $LHS'$ replaced by $\am M$, $\am N$ and $\rpm M$, respectively. One should complete the diagram by adding $\rpm N$ (in the place of the missing $RHS'$) and drawing the reductions $\rpm N\to^*_{\atd\ata}\am N$ and $\rpm M\to^+_{\eta\delta}\rpm N$ (coming respectively from Proposition \ref{prop:comparison-of-maps} and Theorem \ref{thm:strict-simulation}). We do the same for $R=\pid$ and $R=\pia$, obtaining their respective diagrams.

One last thing. In the diagrams just obtained for $\etad$, $\pid$, and $\pia$, some reduction steps in the right half are \emph{administrative}, that is, they reduce redexes that do not correspond to redexes in the source terms $M,N\in\ipc$, but are redexes that were created by the translation $\am{(\_)}$ itself. This question was analyzed in detail in \cite{JESGFerreira2019}, specifically how some reduction steps in $\fat$ stated by Lemmas 6, 11 and 12 of \cite{JESGFerreira2019} can be classified as administrative, when they contribute to bridge $\am M$ and $\am N$. The analysis carries over to reduction steps in $\fat$ stated by Lemmas \ref{lem:eta-disjunction}, \ref{lem:pi-disjunction} and \ref{lem:pi-absurdity}, again when they contribute to bridge $\am M$ and $\am N$ - which is what happens in the diagrams just obtained for $\etad$, $\pid$, and $\pia$. That analysis allows us to say that: in the diagram for $\etad$, the $\beta$-reduction steps from $LHS=\am M$ to $Q$ are administrative; in the diagrams for $\pid$ and $\pia$, the $\beta$-reduction steps from $RHS=\am N$ to $Q$ are administrative.

We now have diagrams in the style of (\ref{eq:simple-picture}) for every reduction rule $R$ of $\ipc$ pertaining to disjunction or absurdity. The diagrams for $\etad$, $\pid$, and $\pia$ are slightly more complex, because they have a central term $Q$, and some reductions are classified as administrative. But we can define a general pattern that comprehends all of these diagrams, and thus explains the translation of any reduction steps $M\to_R N$ in $\ipc$: 


\begin{thm}[Comparison of maps: reduction]\label{thm:comparison-of-maps-reduction}
For $M\to_R N$ in $\ipc$, with $R$ a reduction rule pertaining to disjunction or absurdity, the reductions in Fig.~\ref{fig:translation-reduction-step} hold. Moreover, if $M$ is typable in $\Gamma$, then all reductions in Fig.~\ref{fig:translation-reduction-step} starting from $\rpm M$ or $\rpm N$ are fine in $\rpm\Gamma$.
\end{thm}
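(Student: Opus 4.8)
The plan is to prove the theorem by a case analysis on the reduction rule $R$, in each case merely collecting, into the single diagram of Fig.~\ref{fig:translation-reduction-step}, reductions that have already been established elsewhere; no genuinely new calculation is needed, and in particular no commutation of the diagram is claimed, since the theorem only asserts that the displayed reductions exist.

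First I would dispatch the seven ``simple'' rules $R\in\{\betad,\pii,\pic,\abi,\abc,\abd,\aba\}$, for which the relevant instance of Fig.~\ref{fig:translation-reduction-step} is exactly the square (\ref{eq:simple-picture}). Its four edges come from four independent sources: the horizontal bridges $\rpm M\to^*_{\atd\ata}\am M$ and $\rpm N\to^*_{\atd\ata}\am N$ are Proposition \ref{prop:comparison-of-maps}; the left vertical edge $\am M\to^+_{\beta\eta}\am N$ is Theorem~1 of \cite{JESGFerreira2019}; and the right vertical edge $\rpm M\to^+_{\beta\eta\ccd\cca}\rpm N$ is the corresponding case of Theorem \ref{thm:strict-simulation}.

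Next I would treat the three remaining rules $R\in\{\etad,\pid,\pia\}$ by instantiating Lemmas \ref{lem:eta-disjunction}, \ref{lem:pi-disjunction} and \ref{lem:pi-absurdity}, respectively, using the substitutions indicated just before the theorem: the generic $LHS$, $RHS$ and the primed term of each lemma are read as $\am M$, $\am N$ and one of $\rpm M,\rpm N$, while each hypothesis $T'\to^*_{\atd\ata}T$ is discharged by Proposition \ref{prop:comparison-of-maps}. The fourth corner absent from each lemma diagram ($\rpm N$ for $\etad$, $\rpm M$ for $\pid$ and $\pia$) is then added, together with its $\atd\ata$-bridge to $\am N$ or $\am M$ (again Proposition \ref{prop:comparison-of-maps}) and the vertical edge $\rpm M\to^+\rpm N$ (Theorem \ref{thm:strict-simulation}), producing the richer diagram with a central term $Q$; the classification of the $\beta$-steps incident to $Q$ as administrative is inherited from the analysis recalled above.

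For the fineness addendum I would replay each ingredient in its fine form: the fine part of Proposition \ref{prop:comparison-of-maps} yields fine $\atd\ata$-bridges in $\rpm\Gamma$, the full version of Theorem \ref{thm:strict-simulation} yields the fine right-hand reduction, and the (unstated) fine addenda to Lemmas \ref{lem:eta-disjunction}--\ref{lem:pi-absurdity} yield the fine reductions into $Q$. The only real work is bookkeeping, which I expect to be the main obstacle, albeit a routine one: one must verify the fineness side-conditions of those lemmas, namely that the $\f$-terms playing the roles of $M$, $P_1$, $P_2$ carry a type of the form $\rpm A\dvee\rpm B$ or $\dperp$ in the appropriate environment. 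This follows from subject reduction for $\to_R$ in $\ipc$, which guarantees that the $\ipc$-subterms in question have type $A\vee B$ or $\perp$ (as announced by the last annotation of the relevant $\casesymb$- or $\abortsymb$-expression), combined with type soundness of $\rpm{(\cdot)}$, which transports those types to $\rpm A\dvee\rpm B$ and $\dperp$ over $\rpm\Gamma$.
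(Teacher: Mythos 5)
Your proposal is correct and follows essentially the same route as the paper: the square~(\ref{eq:simple-picture}) assembled from Proposition~\ref{prop:comparison-of-maps}, Theorem~1 of \cite{JESGFerreira2019} and Theorem~\ref{thm:strict-simulation} for the seven simple rules; instantiation of Lemmas~\ref{lem:eta-disjunction}, \ref{lem:pi-disjunction} and \ref{lem:pi-absurdity} (with the missing corner $\rpm N$ or $\rpm M$ added via Proposition~\ref{prop:comparison-of-maps} and Theorem~\ref{thm:strict-simulation}, and the administrative classification carried over) for $\etad$, $\pid$, $\pia$; and the fineness addendum discharged by subject reduction in $\ipc$, type soundness, and the fine/full versions of those same results. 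This is exactly the assembly the paper performs in the discussion preceding the theorem and in its (very short) proof.
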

\begin{proof}
For the second statement, we have to invoke the subject reduction property of $\to_R$ in $\ipc$, the full version of Theorem \ref{thm:strict-simulation}, Proposition \ref{prop:comparison-of-maps}, and the full version of Lemmas \ref{lem:eta-disjunction}, \ref{lem:pi-disjunction}, and \ref{lem:pi-absurdity}.
\end{proof}

\begin{figure}\caption{Translation of a $R$-reduction step in $\ipc$, for $R$ a reduction rule pertaining to disjunction or absurdity. Terms $M$ and $N$ are in $\ipc$. Terms $\am M$, $\am N$, $Q_1$ and $Q_2$ are in $\fat$. Terms $\rpm M$ and $\rpm N$ are in $\f$. If $\am M=Q_1$, then the reduction $\rpm M\to_{\delta\atd\ata}^*Q_1$ is actually the reduction $\rpm M\to_{\atd\ata}^*\am M$. If $\am N=Q_2$, then the reduction $\rpm N\to_{\ccd\cca\atd\ata}^*Q_2$ is actually the reduction $\rpm N\to_{\atd\ata}^*\am N$. Notice that, due to Propositions \ref{prop:variants-of-atomization} and \ref{prop:atomization-vs-cc}, $\rpm M \to^*_{\beta\eta\rho\varrho} \rpm N$.}\label{fig:translation-reduction-step}
\newarrow{Many}----{>>}
\begin{diagram}
M&&\am M&&\lMany{\atd\ata}&&\rpm M\\
&&&\rdMany^{admin}_{\beta}&&\ldMany{\delta\atd\ata}&\\
&&&&Q_1&&\\
\dTo^R&&&&\dMany_{\beta\eta}&&\dMany_{\beta\eta\ccd\cca\delta}\\
&&&&Q_2&&\\
&&&\ruMany^{admin}_{\beta}&&\luMany{\ccd\cca\atd\ata}&\\
N&&\am N&&\lMany{\atd\ata}&&\rpm N
\end{diagram}
\end{figure} 
\section{Discussion}\label{sec:final}

We summarize our contribution. We proposed new conversions for system $\f$ whose purpose is to enforce atomic use of the universal instantiation. Such conversions explain the connection between the Russell-Prawitz translation and the translation into $\fat$ introduced by the authors \cite{JESGFerreira2019}, at the level of proofs (Proposition \ref{prop:comparison-of-maps}) and at the level of proof reduction (Theorem \ref{thm:comparison-of-maps-reduction}). In addition, only when system $\f$ is thus equipped does the Russell-Prawitz translation preserve proof reduction (Theorem \ref{thm:strict-simulation}) - and this without collapsing proof identity in system $\f$ (Theorem \ref{thm:consistency}), because the atomization conversions are not stronger than a certain ``dinaturality'' conversion known to preserve the consistency of equality.

Like the present paper, the recent article \cite{TranchiniPistonePetrolo2019} aims at finding new conversions for $\mathbf{NI}^2$ which allow to establish the preservation of proof identity by the Russell-Prawitz translation. In addition to the fact that we employ $\lambda$-terms, thereby making explicit the algorithmic aspect of the development, we see three main differences/improvements the present paper offers w.r.t.\ the work cited. First, we study the Russell-Prawitz embedding into $\f$ side-by-side with another embedding into $\fat$. This comprehensiveness is opportune because the translation into $\fat$ was perceived initially \cite{FerreiraFerreira2009} as a progress in the matter of preservation of proof identity, our Theorem \ref{thm:comparison-of-maps-reduction} bringing now a full clarification of the issue. Second, as opposed to the new conversion of \cite{TranchiniPistonePetrolo2019}, expressing ``naturality'' in the categorical sense, we propose a much simpler new conversion which, despite being connected to a very simple variant of the ``naturality'' conversion (namely conversions $\ccd$ and $\cca$), has a self-contained motivation (atomization of the uses of universal instantiation), and moreover not only delivers preservation of proof identity, but also makes a bridge between the Russell-Prawitz embedding and the embedding into $\fat$. Third, we obtained preservation of proof \emph{reduction} by the Russell-Prawitz embedding, while \cite{TranchiniPistonePetrolo2019} is only concerned with proof identity.

One wonders whether the results in \cite{TranchiniPistonePetrolo2019}, although stated in terms of proof identity, do establish (or could be modified to establish) results about proof reduction. But, with a single exception (Proposition 4.7. in \cite{TranchiniPistonePetrolo2019}), the answer is ``no'': (i) the results about ``m-closed'' instances of $\pid$ or $\etad$ rely essentially on an argument (see the proofs of Propositions 2.5 and 2.6 in \cite{TranchiniPistonePetrolo2019}) that starts with the $\beta$-normalization of a $\pid$ or $\etad$ \emph{contractum} - hence this $\beta$-normalization goes in the ``wrong direction'', does not preserve the direction of reduction; (ii) in the proof of Proposition 4.9 of \cite{TranchiniPistonePetrolo2019}), on preservation of $\etad$-equality, the Russell-Prawitz translation of the redex starts doing some steps of $\eta$-expansion, which again go in the ``wrong direction''.

Regarding the various embeddings of $\ipc$ into system $\fat$, it can be argued that the embedding $\am{(\_)}$ previously introduced by the authors \cite{JESGFerreira2019} has advantages over the original embedding based on instantiation overflow \cite{Ferreira2006,FerreiraFerreira2009}, in that shorter translations of proofs and of reduction sequences are obtained. In a recent paper in arXiv \cite{PTP-arXiv2019}, Pistone, Tranchini and Petrolo independently establish a connection between the Russell-Prawitz translation and yet another translation of $\ipc$ directly into system $\fat$, showing that they are equivalent modulo an extended equational theory for System $\f$. The translation into $\fat$ in \cite{PTP-arXiv2019} is not more ``economic'' than $\am{(\_)}$, but the exact comparison between the two deserve further investigation. However, the naturalness of the connection between $\am{(\_)}$ and the Russell-Prawitz translation established in the present paper, with $\am M$ being the $\atd\ata$-normal form of $\rpm{M}$, seems to the authors a strong indication of the special place occupied by the embedding $\am{(\_)}$ into system $\fat$.

Since $\am M$ is the atomic normal form of $\rpm M$, the embedding $\am{(\_)}$ makes full use of atomization at compile time. But, for the purpose of simulation, what one needs is a judicious use of atomization at run time. As observed in Fig.~\ref{fig:translation-reduction-step}, a reduction from $\am M$ to $\am N$ is still missing, for some cases of reduction $M\to N$ in $\ipc$, while a reduction always exists between the Russell-Prawitz translations $\rpm M$ and $\rpm N$. Such reduction sometimes contains atomization steps - those hidden in the $\ccd\cca\delta$-reduction steps pertaining to the reduction. So the simulation in system $\f$ by the Russell-Prawitz translation makes a controlled (not full) use of atomization depending on the source $\ipc$ reduction step $M\to N$, while such a resource is not available in system $\fat$, because in system $\fat$ we must stay fully atomized.



\section*{Acknowledgements}

The authors thank the referees for their comments, in particular, for an error pointed out in the first version of the proof of termination of atomization. Both authors were supported by Funda\c{c}\~{a}o para a Ci\^{e}ncia e a Tecnologia [UIDB/00013/2020 and UIDP/00013/2020, UID/MAT/04561/2019, UID/CEC/00408/2019]. Both authors are grateful to Centro de Mate\-m\'{a}tica, Aplica\c{c}\~{o}es Fundamentais e Investiga\c{c}\~{a}o Operacional and the second author is also grateful to Large-Scale Informatics Systems Laboratory (Universidade de Lisboa).

\bigskip

\bibliography{bibrefs}
\bibliographystyle{plain}

\appendix
\section{Some proofs}\label{sec:some-proofs}

In this appendix we collect the proofs of Lemmas \ref{lem:eta-disjunction}, \ref{lem:pi-disjunction} and \ref{lem:pi-absurdity}.

\textbf{Lemma \ref{lem:eta-disjunction}.} Let $M\in\fat$ and $M'\in\f$ such that $M'\to_{\atd\ata}^*M$.
$$
\begin{array}{rcl}
LHS&=&\acase M{x^A}{\ainjn 1xAB}{y^B}{\ainjn 2yAB}{A\dvee B}\\
LHS'&=&\rpcase {M'}{x^A}{\ainjn 1xAB}{y^B}{\ainjn 2yAB}{A\dvee B}\\
RHS&=&M
\end{array}
$$
Then there is $Q\in\fat$ such that
\newarrow{Many}----{>>}
\begin{diagram}[small]
LHS&&\lMany^{\atd\ata}&&LHS'\\
&\rdMany_{\beta}^{admin}&&\ldMany_{\delta\atd\ata}&\\
&&Q&&\\
&\ldMany_{\eta}&&&\\
RHS&&&&
\end{diagram}
\begin{proof} $LHS'\to_{\atd\ata}^*LHS$ by $M'\to_{\atd\ata}^*M$ and Lemma \ref{lem:comparison-case}. $LHS$ is
$$
\Lb X.\acase{M}{x}{(\Lb Y\lb z.\proj 1 z x)X}{y}{(\Lb Y\lb z.\proj 2 z y)X}{((A\supset X)\wedge(B\supset X))\supset X}
$$
From the proof of Lemma 6 in \cite{JESGFerreira2019} we copy the following calculation, where we identify the term $Q$:
$$
\begin{array}{cll}
&LHS&\\
\to_{\betaall}^2&\Lb X.\acase{M}{x}{\lb z.\proj 1 z x}{y}{\lb z.\proj 2 z y}{((A\supset X)\wedge(B\supset X))\supset X}&\\
=&\Lb X\lb w.\acase{M}{x}{(\lb z.\proj 1 z x)w}{y}{(\lb z.\proj 2 z y)w}{X}&\\
\to_{\betai}^2&\Lb X\lb w.\acase{M}{x}{\proj 1 w x}{y}{\proj 2 w y}{X}&\\
=&\Lb X\lb w.{M}X\pair{\lb{x}.{\proj 1 w x}}{\lb{y}.{\proj 2 w y}}=:Q&\\
\to_{\etai}^2&\Lb X\lb w.{M}X\pair{{\proj 1 w}}{{\proj 2 w }}&\\
\to_{\etac}&\Lb X\lb w.{M}Xw&\\
\to_{\etai}&\Lb X.{M}X&\\
\to_{\etaall}&M&\\
=&RHS&
\end{array}
$$
We conclude as follows:
$$
\begin{array}{cll}
&LHS'&\\
=&M'(A\dvee B)\pair{\lb x^A\Lb X\lb w.w1x}{\lb y^B\Lb X\lb w.w2y}&\\
\to_{\atd\ata}^*&M(A\dvee B)\pair{\lb x^A\Lb X\lb w.w1x}{\lb y^B\Lb X\lb w.w2y}&\\
\to_{\delta}&\Lb X.M(((A\supset X)\wedge(B\supset X))\supset X)\pair{\lb x^A\lb w.w1x}{\lb y^B\lb w.w2y}&\\
\to_{\delta}&\Lb X\lb w.{M}X\pair{\lb{x}.{\proj 1 w x}}{\lb{y}.{\proj 2 w y}}&\\
=&Q&
\end{array}
$$
\end{proof}

\textbf{Lemma \ref{lem:pi-disjunction}.} Let $M,P_1,P_2,Q_1,Q_2\in\fat$ and $M',P'_1,P'_2,Q'_1,Q'_2\in\f$. Let
$$
\begin{array}{rcl}
LHS&=&\acase{\acase M{x_1^{A_1}}{P_1}{x_2^{A_2}}{P_2}{B_1\dvee B_2}}{y_1^{B_1}}{Q_1}{y_2^{B_2}}{Q_2}{C}\\
RHS&=&\acase M{x_1}{\acase{P_1}{y_1}{Q_1}{y_2}{Q_2}C}{x_2}{\acase{P_2}{y_1}{Q_1}{y_2}{Q_2}C}C\\
RHS'&=&\rpcase {M'}{x_1}{\rpcase{P'_1}{y_1}{Q'_1}{y_2}{Q'_2}C}{x_2}{\rpcase{P'_2}{y_1}{Q'_1}{y_2}{Q'_2}C}C
\end{array}$$
Suppose $T'\to_{\atd\ata}^*T$, for $T=M,P_1,P_2,Q_1,Q_2$. Then, there is $Q\in\fat$ such that
\newarrow{Many}----{>>}
\begin{diagram}[small]
LHS&&&&\\
&\rdMany_{\beta}&&&\\
&&Q&&\\
&\ruMany_{\beta}^{admin}&&\luMany{\ccd\atd\ata}&\\
RHS&&\lMany_{\atd\ata}&&RHS'
\end{diagram}

\begin{proof} $RHS'\to_{\atd\ata}^*RHS$ by the assumed reductions and Lemma \ref{lem:comparison-case}. The remainder of the diagram is proved by induction on $C$.

Case $C=Y$. $LHS$ is, by definition of $\acasesymb$,
$$
(\Lb X.\lb w^{(B_1\supset X)\wedge(B_2\supset X)}.MX\pair{\lb x_1.P_1 Xw}{\lb x_2.P_2 Xw})Y\pair{\lb y_1.Q_1}{\lb y_2.Q_2}\enspace,
$$
which, after one $\betaall$-reduction step, becomes
$$
(\lb w^{(B_1\supset Y)\wedge(B_2\supset Y)}.MY\pair{\lb x_1^{A_1}.P_1 Yw}{\lb x_2^{A_2}.P_2 Yw})\pair{\lb y_1^{B_1}.Q_1}{\lb y_2^{B_2}.Q_2}\enspace,
$$
because $X\notin M,P_1,P_2,A_1,A_2,B_1,B_2$. This term, in turn, yields, after one $\betai$-reduction step,
$$
MY\pair{\lb x_1^{A_1}.P_1 Y\pair{\lb y_1.Q_1}{\lb y_2.Q_2}}{\lb x_2^{A_2}.P_2 Y\pair{\lb y_1^{B_1}.Q_1}{\lb y_2^{B_2}.Q_2}}\enspace.
$$
This is $RHS$ by definition of $\acasesymb$. This calculation comes from the proof of Lemma 11 in \cite{JESGFerreira2019}. Now we add: put $Q:=RHS$. The reduction $RHS'\to_{\ccd\atd\ata}^*Q$ holds due to $RHS'\to_{\atd\ata}^*RHS$.

Case $C=C_1\supset C_2$. By definition of $\acasesymb$, $LHS$ is $\lb z^{C_1}.LHS_0$, where
$$LHS_0=\acase{\acase M{x_1}{P_1}{x_2}{P_2}{B_1\dvee B_2}}{y_1}{Q_1 z}{y_2}{Q_2 z}{C_2}\enspace.$$
On the other hand, $RHS$ is, by definition of $\acasesymb$,
$$\lb z^{C_1}.\acase M{x_1}{N_3}{x_2}{N_4}{C_2}\enspace,$$
with $N_3=(\acase{P_1}{y_1}{Q_1}{y_2}{Q_2}{C})z$, $N_4=(\acase{P_2}{y_1}{Q_1}{y_2}{Q_2}{C})z$. As argued in the proof of Lemma 11 in \cite{JESGFerreira2019}, $RHS$ does two administrative $\betai$-reduction steps (in the ``wrong'' direction), yielding $\lb z^{C_1}.RHS_0$, where
$$
RHS_0=\acase M{x_1}{N_1}{x_2}{N_2}{C_2}\enspace,
$$
with $N_1=\acase{P_1}{y_1}{Q_1 z}{y_2}{Q_2 z}{C_2}$, $N_2=\acase{P_2}{y_1}{Q_1 z}{y_2}{Q_2 z}{C_2}$.

Now $RHS'$ is the term
$$
M'(C_1\supset C_2)\pair{\lb x_1.P_1'(C_1\supset C_2)\pair{\lb y_1.Q_1'}{\lb y_2.Q_2'}}{\lb x_2.P_2'(C_1\supset C_2)\pair{\lb y_1.Q_1'}{\lb y_2.Q_2'}}
$$
which, after one $\atd$-reduction step, becomes
$$
\lb z^{C_1}.M' C_2\pair{\lb x_1.(P_1'(C_1\supset C_2)\pair{\lb y_1.Q_1'}{\lb y_2.Q_2'})z}{\lb x_2.(P_2'(C_1\supset C_2)\pair{\lb y_1.Q_1'}{\lb y_2.Q_2'})z}
$$
After two $\ccd$-reduction steps one obtains
$$
\lb z^{C_1}.M' C_2\pair{\lb x_1.P_1' C_2\pair{\lb y_1.Q_1'z}{\lb y_2.Q_2'z}}{\lb x_2.P_2' C_2\pair{\lb y_1.Q_1'z}{\lb y_2.Q_2'z}}
$$
The latter term is $\lb z^{C_1}.RHS'_0$, where $RHS'_0$ is
$$\rpcase {M'}{x_1}{\rpcase {P_1'}{y_1}{Q_1'z}{y_2}{Q_2'z}{C_2}}{x_2}{\rpcase {P_2'}{y_1}{Q_1'z}{y_2}{Q_2'z}{C_2}}{C_2}$$

By IH, applied to the terms $LHS_0$, $RHS_0$ and $RHS'_0$, one obtains a term $Q_0$ ``in the middle'' of three reduction sequences, as in the diagram above. The reduction relations involved are closed under the rule: $T\to T' \Rightarrow \lb z^{C_1}.T \to \lb z^{C_1}.T'$. So if we prefix the terms $LHS_0$, $RHS_0$, $RHS'_0$ and $Q_0$ with $\lb z^{C_1}$, the same reductions hold. We take $Q:=\lb z^{C_1}.Q_0$ and we are done.

Case $C=C_1\wedge C_2$. By definition of $\acasesymb$, $LHS$ is $\ipair i{LHS_{0i}}$, where
$$
LHS_{0i}=\acase{\acase M{x_1}{P_1}{x_2}{P_2}{B_1\dvee B_2}}{y_1}{\proj i{Q_1}}{y_2}{\proj i{Q_2}}{C_i}\enspace.
$$

On the other hand, $RHS$ is, by definition of $\acasesymb$,
$$
\ipair i{\acase M{x_1}{N_3}{x_2}{N_4}{C_i}}\enspace,
$$
with $N_3=\proj i{\acase{P_1}{y_1}{Q_1}{y_2}{Q_2}{C}}$ and $N_4=\proj i{\acase{P_2}{y_1}{Q_1}{y_2}{Q_2}{C}}$. As argued in the proof of Lemma 11 and in the comments on Theorem 1 in \cite{JESGFerreira2019}, $RHS$ does four administrative $\betac$-reduction steps (in the ``wrong'' direction), yielding the term $\ipair i{RHS_{0i}}$, where
$$
RHS_{0i}=\acase M{x_1}{N_1}{x_2}{N_2}{C_i}\enspace,
$$
with $N_1=\acase{P_1}{y_1}{\proj i{Q_1}}{y_2}{\proj i{Q_2}}{C_i}$ and $N_2=\acase{P_2}{y_1}{\proj i{Q_1}}{y_2}{\proj i{Q_2}}{C_i}$.

Now $RHS'$ is the term
$$
M'(C_1\wedge C_2)\pair{\lb x_1.P_1'(C_1\wedge C_2)\pair{\lb y_1.Q_1'}{\lb y_2.Q_2'}}{\lb x_2.P_2'(C_1\wedge C_2)\pair{\lb y_1.Q_1'}{\lb y_2.Q_2'}}
$$
which, after one $\atd$-reduction step, becomes
$$
\ipair i{M'C_i\pair{\lb x_1.(P_1'(C_1\wedge C_2)\pair{\lb y_1.Q_1'}{\lb y_2.Q_2'})i}{\lb x_2.(P_2'(C_1\wedge C_2)\pair{\lb y_1.Q_1'}{\lb y_2.Q_2'})i}}
$$
After four $\ccd$-reduction steps, one obtains
$$
\ipair i{M'C_i\pair{\lb x_1.P_1'C_i\pair{\lb y_1.Q_1'i}{\lb y_2.Q_2'i}}{\lb x_2.P_2'C_i\pair{\lb y_1.Q_1'i}{\lb y_2.Q_2'i}}}
$$
The latter term is $\ipair i{RHS'_{0i}}$, where $RHS'_{0i}$ is
$$
\rpcase {M'}{x_1}{\rpcase{P_1'}{y_1}{Q_1'i}{y_2}{Q_2'i}{C_i}}{x_2}{\rpcase{P_2'}{y_1}{Q_1'i}{y_2}{Q_2'i}{C_i}}{C_i}
$$
For each $i=1,2$, and by IH, applied to the terms $LHS_{0i}$, $RHS_{0i}$ and $RHS'_{0i}$, one obtains a term $Q_{0i}$ ``in the middle'' of three reduction sequences, as in the diagram above. The reduction relations involved are closed under the rule: $T_1\to T_1'\textrm{ and }T_2\to T_2' \Rightarrow \pair{T_1}{T_2} \to \pair{T_1'}{T_2'}$. So if we form the pairs $\pair{LHS_{01}}{LHS_{02}}$, $\pair{RHS_{01}}{RHS_{02}}$, $\pair{RHS'_{01}}{RHS'_{02}}$ and $\pair{Q_{01}}{Q_{02}}$, the same reductions hold. We take $Q:=\pair{Q_{01}}{Q_{02}}$ and we are done.

Case $C=\forall Y.D$. By definition of $\acasesymb$, $LHS$ is $\Lambda Y.LHS_0$, where
$$
LHS_0=\acase{\acase M{x_1}{P_1}{x_2}{P_2}{B_1\dvee B_2}}{y_1}{Q_1Y}{y_2}{Q_2Y}{D}
$$
On the other hand, $RHS$ is, by definition of $\acasesymb$,
$$
\Lambda Y.\acase M{x_1}{N_3}{x_2}{N_4}{D}\enspace,
$$
with $N_3=(\acase{P_1}{y_1}{Q_1}{y_2}{Q_2}C)Y$, $N_4=(\acase{P_2}{y_1}{Q_1}{y_2}{Q_2}C)Y$. As argued in the proof of Lemma 11 in \cite{JESGFerreira2019}, $RHS$ does two administrative $\betaall$-reduction steps (in the ``wrong'' direction), yielding the term $\Lambda Y.RHS_0$, where
$$
RHS_0=\acase M{x_1}{N_1}{x_2}{N_2}{D}\enspace,
$$
with $N_1=\acase{P_1}{y_1}{Q_1Y}{y_2}{Q_2Y}D$, $N_2=\acase{P_2}{y_1}{Q_1Y}{y_2}{Q_2Y}D$.

Now $RHS'$ is
$$
M'(\forall Y.D)\pair{\lb x_1.P_1'(\forall Y.D)\pair{\lb y_1.Q_1'}{\lb y_2.Q_2'}}{\lb x_2.P_2'(\forall Y.D)\pair{\lb y_1.Q_1'}{\lb y_2.Q_2'}}
$$
which, after a $\atd$-reduction step, becomes
$$
\Lambda Y.M'D\pair{\lb x_1.(P_1'(\forall Y.D)\pair{\lb y_1.Q_1'}{\lb y_2.Q_2'})Y}{\lb x_2.(P_2'(\forall Y.D)\pair{\lb y_1.Q_1'}{\lb y_2.Q_2'})Y}
$$
After two $\ccd$-reduction steps, one obtains
$$
\Lambda Y.M'D\pair{\lb x_1.P_1'D\pair{\lb y_1.Q_1'Y}{\lb y_2.Q_2'Y}}{\lb x_2.P_2'D\pair{\lb y_1.Q_1'Y}{\lb y_2.Q_2'Y}}
$$
The latter term is $\Lambda Y.RHS'_0$, where $RHS'_0$ is
$$
\rpcase{M'}{x_1}{\rpcase{P_1'}{y_1}{Q_1'Y}{y_2}{Q_2'Y}D}{x_2}{\rpcase{P_2'}{y_1}{Q_1'Y}{y_2}{Q_2'Y}D}{D}
$$
By IH, applied to the terms $LHS_0$, $RHS_0$ and $RHS'_0$, one obtains a term $Q_0$ ``in the middle'' of three reduction sequences, as in the diagram above. The reduction relations involved are closed under the rule: $T\to T' \Rightarrow \Lambda Y.T \to \Lambda Y.T'$. So if we prefix the terms $LHS_0$, $RHS_0$, $RHS'_0$ and $Q_0$ with $\Lambda Y$, the same reductions hold. We take $Q:=\Lambda Y.Q_0$ and we are done.
\end{proof}

\textbf{Lemma \ref{lem:pi-absurdity}.} Let $M,P_1,P_2\in\fat$ and $M',P_1',P_2'\in\f$. Let
$$
\begin{array}{rcl}
LHS&=&\aabort{\acase M{x_1^{A_1}}{P_1}{x_2^{A_2}}{P_2}{\dperp}}C\\
RHS&=&\qquad\acase M{x_1^{A_1}}{\aabort {P_1}C}{x_2^{A_2}}{\aabort {P_2}C}C\\
RHS'&=&\qquad\rpcase {M'}{x_1^{A_1}}{\rpabort {P_1'}C}{x_2^{A_2}}{\rpabort {P_2'}C}C
\end{array}
$$
Suppose $T'\to_{\atd\ata}^*T$, for $T=M,P_1,P_2$. Then, there is $Q\in\fat$ such that
\newarrow{Many}----{>>}
\begin{diagram}[small]
LHS&&&&\\
&\rdMany_{\beta}&&&\\
&&Q&&\\
&\ruMany_{\beta}^{admin}&&\luMany{\cca\atd\ata}&\\
RHS&&\lTo_{\atd\ata}&&RHS'
\end{diagram}
\begin{proof} $RHS'\to_{\atd\ata}^*RHS$ by the assumed reductions and Lemmas \ref{lem:comparison-case} and \ref{lem:comparison-abort}. The remainder of the diagram is proved by induction on $C$.

Case $C=Y$. $LHS$ is, by definition of $\aabortsymb$ and $\acasesymb$,
$$
(\Lb X.MX\pair{\lb x_1^{A_1}.P_1X}{\lb x_2^{A_2}.P_2X})Y\enspace,
$$
which, after one $\betaall$-reduction step, becomes
$$
MY\pair{\lb x_1^{A_1}.P_1Y}{\lb x_2^{A_2}.P_2Y})\enspace,
$$
because $X\notin M,P_1,P_2,A_1,A_2$.

This is $RHS$ by definition of $\aabortsymb$ and $\acasesymb$. This calculation comes from the proof of Lemma 12 in \cite{JESGFerreira2019}. Now we add: put $Q:=RHS$. The reduction $RHS'\to_{\cca\atd\ata}^*Q$ holds due to $RHS'\to_{\atd\ata}^*RHS$.

Case $C=C_1\supset C_2$. By definition of $\aabortsymb$, $LHS$ is $\lb z^{C_1}.LHS_0$, where
$$LHS_0=\aabort{\acase M{x_1}{P_1}{x_2}{P_2}{\dperp}}{C_2}\enspace.$$
On the other hand, $RHS$ is, by definition of $\acasesymb$,
$$\lb z^{C_1}.\acase M{x_1}{\aabort {P_1}{C_1\supset C_2}z}{x_2}{\aabort {P_2}{C_1\supset C_2}z}{C_2}\enspace.$$ As argued in the proof of Lemma 12 and in the comments on Theorem 1 in \cite{JESGFerreira2019}, $RHS$ does two administrative $\betai$-reduction steps (in the ``wrong'' direction), yielding $\lb z^{C_1}.RHS_0$, where
$$
RHS_0=\acase M{x_1}{\aabort {P_1}{C_2}}{x_2}{\aabort {P_2}{C_2}}{C_2}\enspace.
$$

Now $RHS'$ is the term
$$
M'(C_1\supset C_2)\pair{\lb x_1.P_1'(C_1\supset C_2)}{\lb x_2.P_2'(C_1\supset C_2)}
$$
which, after one $\atd$-reduction step, becomes
$$
\lb z^{C_1}.M' C_2\pair{\lb x_1.P_1'(C_1\supset C_2)z}{\lb x_2.P_2'(C_1\supset C_2)z}.
$$
After two $\cca$-reduction steps one obtains
$$
\lb z^{C_1}.M' C_2\pair{\lb x_1.P_1' C_2}{\lb x_2.P_2' C_2}.
$$
The latter term is $\lb z^{C_1}.RHS'_0$, where $RHS'_0$ is
$$\rpcase {M'}{x_1}{\rpabort {P_1'}{C_2}}{x_2}{\rpabort {P_2'}{C_2}}{C_2}.$$

By IH, applied to the terms $LHS_0$, $RHS_0$ and $RHS'_0$, one obtains a term $Q_0$ ``in the middle'' of three reduction sequences, as in the diagram above. The reduction relations involved are closed under the rule: $T\to T' \Rightarrow \lb z^{C_1}.T \to \lb z^{C_1}.T'$. So if we prefix the terms $LHS_0$, $RHS_0$, $RHS'_0$ and $Q_0$ with $\lb z^{C_1}$, the same reductions hold. We take $Q:=\lb z^{C_1}.Q_0$ and we are done.

Case $C=C_1\wedge C_2$. By definition of $\aabortsymb$, $LHS$ is $\ipair i{LHS_{0i}}$, where
$$
LHS_{0i}=\aabort{\acase M{x_1}{P_1}{x_2}{P_2}{\dperp}}{C_i}\enspace.
$$

On the other hand, $RHS$ is, by definition of $\acasesymb$,
$$\ipair i{\acase M{x_1}{\proj i{\aabort {P_1}{C_1\wedge C_2}}}{x_2}{\proj i{\aabort {P_2}{C_1\wedge C_2}}}{C_i}}\enspace.$$ As argued in the proof of Lemma 12 and in the comments on Theorem 1 in \cite{JESGFerreira2019}, $RHS$ does two administrative $\betac$-reduction steps (in the ``wrong'' direction), yielding the term $\ipair i{RHS_{0i}}$, where
$$
RHS_{0i}=\acase M{x_1}{\aabort {P_1}{C_i}}{x_2}{\aabort {P_2}{C_i}}{C_i}\enspace.
$$

Now $RHS'$ is the term
$$
M'(C_1\wedge C_2)\pair{\lb x_1.P_1'(C_1\wedge C_2)}{\lb x_2.P_2'(C_1\wedge C_2)}
$$
which, after one $\atd$-reduction step, becomes
$$
\ipair i{M'C_i\pair {\lb x_1.P_1'(C_1\wedge C_2)i}{\lb x_2.P_2'(C_1\wedge C_2)i}}.
$$
After two $\cca$-reduction steps one obtains
$$
\ipair i{M'C_i\pair {\lb x_1.P_1'C_i}{\lb x_2.P_2'C_i}}.
$$
The latter term is
$\ipair i{RHS'_{0i}}$, where $RHS'_{0i}$ is
$$
\rpcase {M'}{x_1}{\rpabort{P_1'}{C_i}}{x_2}{\rpabort{P_2'}{C_i}}{C_i}.
$$
For each $i=1,2$, and by IH, applied to the terms $LHS_{0i}$, $RHS_{0i}$ and $RHS'_{0i}$, one obtains a term $Q_{0i}$ ``in the middle'' of three reduction sequences, as in the diagram above. The reduction relations involved are closed under the rule: $T_1\to T_1'\textrm{ and }T_2\to T_2' \Rightarrow \pair{T_1}{T_2} \to \pair{T_1'}{T_2'}$. So if we form the pairs $\pair{LHS_{01}}{LHS_{02}}$, $\pair{RHS_{01}}{RHS_{02}}$, $\pair{RHS'_{01}}{RHS'_{02}}$ and $\pair{Q_{01}}{Q_{02}}$, the same reductions hold. We take $Q:=\pair{Q_{01}}{Q_{02}}$ and we are done.

Case $C=\forall Y.D$. By definition of $\aabortsymb$, $LHS$ is $\Lambda Y.LHS_0$, where
$$
LHS_0=\aabort{\acase M{x_1}{P_1}{x_2}{P_2}{\dperp}}{D}.
$$
On the other hand, $RHS$ is, by definition of $\acasesymb$,
$$
\Lambda Y.\acase M{x_1}{\aabort{P_1}{\forall Y.D}Y}{x_2}{\aabort{P_2}{\forall Y.D}Y}{D}\enspace.
$$
As argued in the proof of Lemma 12 in \cite{JESGFerreira2019}, $RHS$ does two administrative $\betaall$-reduction steps (in the ``wrong'' direction), yielding the term $\Lambda Y.RHS_0$, where
$$
RHS_0=\acase M{x_1}{\aabort{P_1}{D}}{x_2}{\aabort{P_2}{D}}{D}\enspace.
$$

Now $RHS'$ is
$$
M'(\forall Y.D)\pair{\lb x_1.P_1'(\forall Y.D)}{\lb x_2.P_2'(\forall Y.D)}
$$
which, after a $\atd$-reduction step, becomes
$$
\Lambda Y.M'D\pair{\lb x_1.(P_1'(\forall Y.D))Y}{\lb x_2.(P_2'(\forall Y.D))Y}.
$$
After two $\cca$-reduction steps, one obtains
$$
\Lambda Y.M'D\pair{\lb x_1.P_1'D}{\lb x_2.P_2'D}.
$$
The latter term is $\Lambda Y.RHS'_0$, where $RHS'_0$ is
$$
\rpcase{M'}{x_1}{\rpabort{P_1'}D}{x_2}{\rpabort{P_2'}D}{D}.
$$
By IH, applied to the terms $LHS_0$, $RHS_0$ and $RHS'_0$, one obtains a term $Q_0$ ``in the middle'' of three reduction sequences, as in the diagram above. The reduction relations involved are closed under the rule: $T\to T' \Rightarrow \Lambda Y.T \to \Lambda Y.T'$. So if we prefix the terms $LHS_0$, $RHS_0$, $RHS'_0$ and $Q_0$ with $\Lambda Y$, the same reductions hold. We take $Q:=\Lambda Y.Q_0$ and we are done.

\end{proof}




\end{document}